\def\ruleone#1#2{\prooftree #1 \justifies #2 \endprooftree}
\def\ruletwo#1#2#3{\ruleone{#1\quad #2}{#3}}
\def\ruledot#1{\prooftree \proofdotseparation=1.2ex\proofdotnumber=3
               \leadsto #1 \justifies #1 \endprooftree}
\theoremstyle{plain}
\newtheorem{theorem}{Theorem}[section]
\newtheorem{corollary}[theorem]{Corollary}
\newtheorem{lemma}[theorem]{Lemma}
\theoremstyle{definition}
\newtheorem{definition}[theorem]{Definition}
\newtheorem{convention}[theorem]{Convention}
\theoremstyle{remark}
\newtheorem{example}[theorem]{Example}
\numberwithin{equation}{section}
\newcommand{\tri}{\triangleright}
\newcommand{\ci}{\circ}
\newcommand{\ri}{\rightsquigarrow}
\newcommand{\mi}{\,,\,}
\newcommand{\is}{\diagdown}
\newcommand{\arrow}{\rightarrow}
\newcommand{\la}{\langle}
\newcommand{\ra}{\rangle}
\newcommand{\twoh}{\arrow\arrow}
\newcommand{\alp}{\equiv_{\alpha}}
\newcommand{\x}{a}
\newcommand{\y}{b}
\newcommand{\z}{c}
\newcommand{\term}{Term\,}
\newcommand{\sub}{Subst\,}
\newcommand{\var}{Var\,}
\newcommand{\absterm}{Abs\,}
\newcommand{\appterm}{App\,}
\newcommand{\closterm}{Clos\,}
\newcommand{\compsub}{Comp\,}
\newcommand{\conssub}{Cons\,}
\newcommand{\varlist}{list\,}
\newcommand{\slist}{List\,}
\renewcommand{\pi}{\mathcal{W}}
\begin{document}
\title{Explicit renaming of bound variables}
\author{George Cherevichenko}
\begin{abstract}We present the lambda calculus $\lambda\pi$ with explicit
substitutions and named variables. The characteristic feature of
this calculus is as follows: renaming of bound variables when
performing substitutions is done using  special reductions and may
be delayed.
\end{abstract}

\maketitle

\section{Introduction}
There is a gap between lambda calculi with explicit substitutions
using De Brujn indices and lambda calculi with explicit
substitutions using ordinary (named) variables. The first follow
 the spirit of category theory. The second attempt to reflect
the ``real way to work with bound variables". We clarify this with
an example. Simultaneous substitution  will be denoted by
$$[x_1/N_1\mi x_2/N_2\mi\ldots\mi x_k/N_k]$$ Let's call this substitution $s$.
Suppose the variable $x$ is different from all
$x_1,x_2,\ldots,x_k$. By $[s\mi x/N]$ denote the substitution
 $$[x_1/N_1\mi x_2/N_2\mi\ldots\mi x_k/N_k\mi x/N]$$
According
 to~\cite{Stoughton},
the substitution $s$ moves under a binder this way
$$(\lambda x.M)[s]\arrow\lambda y.(M[s\mi x/y])$$
where $y$ is a ``fresh" variable.  The similar reduction for
categorical combinators is
$$\Lambda( M)\ci s\arrow\Lambda(M\ci\la s\ci F\mi S\ra)$$
where $F$ denotes the first projection and $S$ denotes the second
projection. A significant difference is that in the latter case
the substitution $s$ is multiplied by the first projection. Abadi,
Cardelli, Curien, and Levy in \cite{Abadi} suggested to use the
substitution $\uparrow$, corresponding to the first projection,
together with named variables. They have obtained the equality
$$(\lambda x.M)[s]=\lambda x.(M[(x/
x)\cdot(s\,\ci\!\uparrow)])$$ We  rewrite this equality as
$$(\lambda x.M)[s]\arrow\lambda x.(M[s\,\ci\!\uparrow\mi x/
x])$$ Abadi, Cardelli, Curien, and Levy write ``In this notation,
intuitively,\\ $x[\uparrow]$ refers to $x$ after the first
binder." To clarify this point, consider some typed calculus with
contexts, where  contexts are finite lists of the form $x_1:A_1,
x_2:A_2,\ldots, x_k:A_k$, where $A_1,A_2,\ldots,A_k$ are types and
\emph{repetitions of variables are permitted.}  A judgement of the
form $\Gamma\vdash x:A$ means ``the rightmost occurrences of the
variable $x$ in the context $\Gamma$ has type $A$." For example,
the judgement $x:A\mi x:B\vdash x:B$ is true, but the judgement
$x:A\mi x:B\vdash x:A$ is not true. But the judgement $x:A\mi
x:B\vdash x[\uparrow]:A$ is true.  The crucial idea is this: if we
allow repetitions of identical variables as in $\lambda x.\lambda
x.M$, then we must allow repetitions in contexts too. In this way
we will obtain some lambda calculus with explicit
substitutions and named variables such that:\\
 (1) It is close to  the calculi of categorical combinators;\\
 (2) It is convenient to work;\\
 (3) Renaming of bound variables when
performing substitutions is done using  special reductions and may
be delayed.

Now we must introduce a convenient notation.  To give a definition
of free variables  it is much more convenient to use the notation
$[s]M$ than   $M[s]$. Substitutions should be on the same side
where contexts and binders are. Composition of substitutions also
will be written in the reverse order (we will write $q\ci s$ where
it was written $s\ci q$). For example, the
rewrite rule\\
$\begin{array}{ll} & M[s][q]\arrow M[s\ci q]
\end{array}$\\[3pt]
will now look like this\\
$\begin{array}{ll} & [q][s]M\arrow[q\ci s]M
\end{array}$\\
Now we can write far fewer parentheses. For example, $[s]\lambda
x.[q]\lambda y.M$ is uniquely deciphered as $[s](\lambda
x.([q](\lambda y.M)))$. I chose the notation $s\ci M$  instead of
$[s]M$, because $s\ci\lambda x.q\ci\lambda y.M$ is easy to read,
this notation is close to the notation of category theory, and we
can now use angle brackets to denote  ordered pairs and nothing
else ($id\circ M$ looks better than $\la id \ra M$).\\ After some
doubts I have replaced the symbol $\uparrow$ by $\pi$. We will
have to supply this symbol with a subscript, and
$\la\pi_x\ci\pi_y\mi\pi_z\ci z\is z\ra$ is much easier to read
than $\la\uparrow_x\ci\uparrow_y\mi\uparrow_z\ci z\is z\ra$. The
symbols $\pi_x$  correspond to $\mathcal{W}_x$ from~\cite{Kesner}
to some extent, but are not the same.

 The sets of untyped
terms and substitutions  are defined inductively as follows:
\begin{align*}
M,N::&= x \mid  MN \mid \lambda x. M \mid s\ci M \\
s,q::&= id \mid \pi \mid \langle s\mi  N\is x \rangle \mid s\ci q
\end{align*}
where the symbol $x$ denotes an arbitrary variable.

 The sets of typed
terms and substitutions are defined inductively as follows:
\begin{align*}
M,N::&= x \mid  MN \mid \lambda x^A. M \mid s\ci M \\
s,q::&= id \mid \pi \mid \langle s\mi  N\is x \rangle \mid s\ci q
\end{align*}
where $A$ is an arbitrary type.\\
A usual simultaneous substitution $$[x_1/N_1\mi
x_2/N_2\mi\ldots\mi x_k/N_k]$$ in the new notation looks like
$$\la\ldots\la\la id\mi N_1\is x_1\ra\mi N_2\is x_2\ra\mi\ldots
N_k\is x_k\ra$$ For brevity, we will write  $$\la id\mi N_1\is
x_1\mi N_2\is x_2\mi\ldots\mi N_k\is x_k\ra$$ But now any two (or
more) of the variables $x_1,\ldots,x_k$
 may
coincide (as in contexts).

A \emph{judgement} is an expression of the form $\Gamma\vdash M:A$
or of the form $\Gamma\vdash s\tri\Delta$, where $\Gamma$ and
$\Delta$ are contexts, $A$ is a type, $M$ is a term, and $s$ is a
substitution.
\begin{definition} (Typing rules).\label{type}\\
$
\begin{array}{lll}
(i)& \Gamma,x:A\vdash x:A &\\[5pt]
(ii)& \ruleone{\Gamma\vdash
x:A}{\Gamma,y:B\vdash x:A} & (x\not\equiv y) \\[15pt]
(iii)& \ruletwo{\Gamma\vdash M:A\arrow B}{\Gamma\vdash
N:A}{\Gamma\vdash
MN:B} &\\[15pt]
(iv)& \ruleone{\Gamma,x:A\vdash M:B}{\Gamma\vdash\lambda x^A.M:A\arrow B} &\\[15pt]
(v)& \ruletwo{\Gamma\vdash s\tri\Delta}{\Delta\vdash
M:A}{\Gamma\vdash
 s\ci M:A} &\\[15pt]
(vi)& \Gamma\vdash id\tri\Gamma &\\[5pt]
(vii)& \Gamma,x:A\vdash\pi \tri\Gamma &\\[5pt]
(viii)& \ruletwo{\Gamma\vdash s\tri\Delta}{\Gamma\vdash
N:A}{\Gamma\vdash
\langle s\mi N\is x\rangle\tri\Delta\, ,x:A} &\\[15pt]
(ix)& \ruletwo{\Gamma\vdash s\tri\Delta}{\Delta\vdash
q\tri\Sigma}{\Gamma\vdash s\ci q\tri\Sigma} &
\end{array}
$\\[5pt]
\end{definition}
The restriction in the rule $(ii)$ is necessary  because
$\Gamma\vdash x:A$ means ``the rightmost occurrences of the
variable $x$ in the context $\Gamma$ has type $A$."

\begin{example}
$$
\ruleone{x:A\mi x:B\vdash x:B}{x:A\mi x:B\mi y:C\vdash x:B}
$$
\end{example}
\begin{example}
$$
\ruletwo{x:A\mi x:B\vdash \pi\tri x:A}{x:A\vdash x:A}{x:A\mi
x:B\vdash \pi\ci x:A}
$$
\end{example}
\begin{example}
$$
\ruletwo{\ruletwo{x:A, x:B, y:C\vdash \pi\tri x:A,x:B}{x:A,
x:B\vdash \pi\tri x:A}{x:A, x:B, y:C\vdash\pi\ci\pi\tri
x:A}}{x:A\vdash x:A}{x:A, x:B, y:C\vdash (\pi\ci\pi)\ci x:A}
$$
\end{example}
\begin{example}
$$
\ruleone{\ruleone{x:A\mi x:B\vdash x:B}{x:A\vdash\lambda
x^B.x:B\arrow B}}{\vdash\lambda x^A.\lambda x^B.x:A\arrow(B\arrow
B)}
$$
\end{example}

 There are no weakening rules except the
rule $(ii)$. But now we have an explicit weakening. For example,
we can derive $\Gamma,y:B\vdash\pi\circ M:A$ from  $\Gamma\vdash
M:A$
\begin{example}
$$\ruletwo{\Gamma,y:B\vdash\pi\tri\Gamma}{\ruledot{ \rule{5mm}{0mm} \Gamma\vdash M:A}}{\Gamma,y:B\vdash \pi\ci M:A}$$
\end{example}
If the variable $y$ does not occur in the context $\Gamma$, then
$\pi\circ M$ reduces to $M$ in some sense (more precisely,
$\pi\circ M$ and $M$ have a common reduct).

The typing rules~\ref{type} have a pleasant property: every
derivable judgement has a unique derivation. This is not true for
the usual typing rules because of weakening rules. This pleasant
property allows us to  determine uniquely the value of any
judgement in some cartesian closed category by induction over the
derivation. Assume that some objects are assigned to types. To
each context of the form
$$x_1:A_1 \mi x_2:A_2\mi\ldots\mi x_n:A_n$$
we assign the object
$$(\ldots(\mathbf{1}\times A_1)\times A_2)\times\cdots)\times
A_n)$$ where $\mathbf{1}$ is the (canonical) terminal object.\\
Denote by $A\overset{f\,\circ\, g}{\arrow} C$ the composition of
$A\overset{f}{\arrow}B$ and
$B\overset{g}{\arrow}C$.\\
 To any derivable judgement of the form
$\Gamma\vdash M:A$ we put in correspondence some arrow from
$\Gamma$ to $A$.\\
 To any derivable judgement of the form
$\Gamma\vdash s\tri\Delta$ we put in correspondence some arrow
from $\Gamma$ to $\Delta$.
\begin{definition}
 $(\Gamma\vdash
M:A)\boldsymbol{\Rightarrow}\Gamma\overset{f}{\arrow} A$
  is shorthand for  ``the arrow $\Gamma\overset{f}{\arrow} A$ corresponds to the judgement $\Gamma\vdash
  M:A$."\\
   $(\Gamma\vdash
s\tri\Delta)\boldsymbol{\Rightarrow} \Gamma\overset{f}{\arrow}
\Delta$ is shorthand for ``the arrow $\Gamma\overset{f}{\arrow}
\Delta$ corresponds to the judgement $\Gamma\vdash s\tri\Delta$."
\end{definition}
\newpage
\begin{definition}(Values of derivable judgements in cartesian closed categories).\label{modeli}\\[5pt]
 $
\begin{array}{ll}
(i)& (\Gamma,x:A\vdash x:A)\boldsymbol{\Rightarrow}  \Gamma\times A\overset{pr_2}{\arrow}A \\[5pt]
 (ii)& \ruleone{(\Gamma\vdash
x:A)\boldsymbol{\Rightarrow}  \Gamma\overset{f}{\arrow}A}{(\Gamma,y:B\vdash x:A)\boldsymbol{\Rightarrow} \Gamma\times B\overset{pr_1\,\circ\, f}{\arrow}A\using {\quad(x\not\equiv y)}}\\[20pt]
 (iii)& \ruletwo{(\Gamma\vdash M:A\arrow B)\boldsymbol{\Rightarrow}  \Gamma\overset{f}{\arrow}B^A}{(\Gamma\vdash N:A)\boldsymbol{\Rightarrow } \Gamma\overset{g}{\arrow}A}{(\Gamma\vdash
MN:B)\boldsymbol{\Rightarrow}  \Gamma\overset{\la f\mi g\ra\,\circ\, Ev}{\longrightarrow}B}\\[20pt]
 (iv)& \ruleone{(\Gamma,x:A\vdash M:B)\boldsymbol{\Rightarrow }  \Gamma\times A\overset{f}{\arrow}B}{(\Gamma\vdash\lambda x^A.M:A\arrow B)\boldsymbol{\Rightarrow}  \Gamma\overset{\Lambda (f)}{\arrow}B^A}\\[20pt]
 (v)& \ruletwo{(\Gamma\vdash s \tri\Delta)\boldsymbol{\Rightarrow}  \Gamma\overset{f}{\arrow}\Delta }{(\Delta\vdash M:A)\boldsymbol{\Rightarrow }  \Delta\overset{g}{\arrow}A}{(\Gamma\vdash
 s \ci M:A)\boldsymbol{\Rightarrow }  \Gamma\overset{f\,\circ\, g}{\arrow}A}\\[20pt]
(vi)& (\Gamma\vdash id \tri\Gamma)\boldsymbol{\Rightarrow}  \Gamma\overset{id}{\arrow}\Gamma \\[5pt]
(vii)& (\Gamma,x:A\vdash\pi \tri\Gamma)\boldsymbol{\Rightarrow}   \Gamma\times A\overset{pr_1}{\arrow}\Gamma  \\[5pt]
(viii)& \ruletwo{(\Gamma\vdash s
\tri\Delta)\boldsymbol{\Rightarrow}
 \Gamma\overset{f}{\arrow}\Delta  }{(\Gamma\vdash
N:A)\boldsymbol{\Rightarrow}
\Gamma\overset{g}{\arrow}A}{(\Gamma\vdash
\langle s\mi N\is x\rangle \tri\Delta\, ,x:A)\boldsymbol{\Rightarrow} \Gamma\overset{\la f\mi g\rangle }{\arrow}\Delta\times A}\\[20pt]
(ix)& \ruletwo{(\Gamma\vdash s \tri\Delta)\boldsymbol{\Rightarrow}
 \Gamma\overset{f}{\arrow}\Delta }{(\Delta\vdash q
\tri\Sigma)\boldsymbol{\Rightarrow}
\Delta\overset{g}{\arrow}\Sigma }{(\Gamma \vdash s \ci q
\tri\Sigma)\boldsymbol{\Rightarrow}  \Gamma\overset{f\,\circ\, g}{\arrow}\Sigma }\\
\end{array}
$
\end{definition}$ $\\[10pt]
Now we can write some  equations (untyped for simplicity).
\newpage
\begin{definition}(The calculus of equations).\label{rav}\\[5pt]
 $
\begin{array}{lll}
(Beta) & (\lambda x.M)N= \langle id\mi N\is x\rangle\ci M &\\
(Abs) & s\ci \lambda x.M=\lambda x.\langle \pi\ci s\mi x\is
x\rangle\ci M &\\
(App) & s\ci (MN)=(s\ci  M)(s\ci  N) &\\
(ConsVar) & \langle s\mi N\is x\rangle\ci x= N &\\
(New) & \langle s\mi N\is x\rangle\ci y=  s\ci y & (x\not\equiv y)\\
(IdVar) &  id\ci x= x &\\
(Clos) & s\ci q\ci M= (s\ci q)\ci M &\\
(Ass) & s\ci q\ci  r= (s\ci q)\ci r &\\
(IdR) & s\ci id= s &\\
(IdShift) & id\ci \pi=\pi &\\
(ConsShift) & \langle s\mi N\is x\rangle\ci \pi= s &\\
(Map) & s\ci \langle q\mi N\is x\rangle=\langle
s\ci q \mi s\ci N\is x\rangle &\\
(\alpha) & \lambda x.M=\lambda
y.\langle\pi\mi y\is x\rangle\ci M & (x,y \text{ are arbitrary})\\
[5pt]
\end{array}$
\end{definition}

Here $s,q,r$ are substitutions,\\
$\begin{array}{lll}
 s\ci q\ci M & \text{is shorthand for} &
s\ci(q\ci M)\\
 s\ci q\ci r & \text{is shorthand for} & s\ci(q\ci r)\\
 \la
s\ci q\mi s\ci N\is x\ra & \text{is shorthand for} & \la (s\ci
q)\mi(s\ci N)\is x\ra
\end{array}$\\

 The names of the equations are taken from \cite{Abadi},
but partially reversed ($ConsVar$ instead of $VarCons$ and so on)
because of the reversed notation. The equations $New$  and
$\alpha$ are new\footnote{After the article was written, Johan G. 
Granstr\"{o}m pointed me to his PhD thesis~\cite{Granstrom} with a
very similar calculus in Chapter V.}.

When a substitution is applied to a variable, the rightmost
occurrence of this variable works. See the following example
$$\la id\mi M\is x\mi N\is x\mi L\is y\ra\ci x=_{New}\la id\mi M\is
x\mi N\is x\ra\ci x=_{ConsVar}N$$
 I want to stress that there is
no restriction on the variables in $(\alpha)$. For example, we can
write
 $$\lambda x.M=\lambda
x.\la\pi\mi x\is x\ra\ci M$$

The following special case of $(\alpha)$ is important
$$
\begin{array}{ll}
\lambda x.y=\lambda x.\la\pi\mi x\is x\ra\ci y & \quad(x\not\equiv y)\\
[5pt]
\end{array}$$

Applying $New$ to the right part, we obtain\\ $
\begin{array}{lccc}
\quad(\pi)& \qquad & \lambda x.y=\lambda x.\pi\ci y & \quad(x\not\equiv y)\\
[5pt]
\end{array}$

Now we can compute lambda-terms.
\begin{example}\label{primer}
$$\begin{array}{ll}
(\lambda x y.x)\,y & \\
= \langle id\mi y\is x\rangle\ci \lambda y.x &
Beta\\
=\lambda y.\langle\pi\ci \langle id\mi y\is x\rangle\mi y\is
y\rangle\ci x & Abs\\
=\lambda y.(\pi\ci \langle id\mi y\is x\rangle)\ci x &
New\\
=\lambda
y.\langle\pi\ci id\mi \pi\ci y\is x\rangle\ci x & Map\\
= \lambda y.\pi\ci y & ConsVar\\
=\lambda z.\langle\pi\mi z\is y\rangle\ci \pi\ci y &
\alpha\\
=\lambda z.(\langle\pi\mi z\is y\rangle\ci \pi)\ci y & Clos\\
=\lambda z.\pi\ci y & ConsShift\\
=\lambda z.y & \pi
\end{array}
$$
\end{example}

We show by examples how to define free variables of terms. Our
calculus has an unexpected feature: the variable $x$ can occur
freely in a term of the form $\lambda x.M$. To each free
occurrence of $x$ in $M$ assign its \emph{level} (it is not the De
Brujn level), which is a natural number $\geqslant 1$. The only
occurrence of $x$ in the term $x$ has  level $1$. We can
immediately bind this occurrence and get $\lambda x.x$. The only
occurrence of $x$ in the term $\pi\ci x$ has  level $2$. The
rightmost occurrence of $x$ in the term $\lambda x.\pi\ci x$ is
free and has  level $1$. The rightmost occurrence of $x$ in the
term $\lambda x.\lambda x.\pi\ci x$ is bound. The only occurrence
of $x$ in the term $\pi\circ\pi\ci x$ has  level $3$. The
rightmost occurrence of $x$ in the term $\lambda x.\pi\circ\pi\ci
x$ is free and has level $2$. The rightmost occurrence of $x$ in
the term $\lambda x.\lambda x.\pi\circ\pi\ci x$ is free and has
level $1$. The rightmost occurrence of $x$ in the term $\lambda
x.\lambda x.\lambda x.\pi\circ\pi\ci x$ is bound.

Because  the rightmost occurrence of $y$ in the term $\lambda
y.\pi\circ y$ is free, this term is $\alpha$-equal to the term
$\lambda z.\pi\circ y$. This renaming of  the bound variable is
done in Example~~\ref{primer}.
$$\begin{array}{ll}
\lambda y.\pi\ci y & \\
=\lambda z.\langle\pi\mi z\is y\rangle\ci \pi\ci y &
\alpha\\
=\lambda z.(\langle\pi\mi z\is y\rangle\ci \pi)\ci y & Clos\\
=\lambda z.\pi\ci y & ConsShift
\end{array}
$$

I do not have good rewrite rules for the calculus~\ref{rav}, hence
I propose a different approach. We change the language. Now each
symbol $\pi$  is equipped with a variable as a subscript
$(\pi_x,\pi_y,\pi_z\ldots)$.  The sets of untyped terms and
substitutions are defined inductively as follows:
\begin{align*}
M,N::&= x \mid  MN \mid \lambda x. M \mid s\ci M \\
s,q::&= id \mid \pi_{x} \mid \langle s\mi  N\is x \rangle \mid
s\ci q
\end{align*}
where the symbol $x$ denotes an arbitrary variable.

The sets of typed terms and substitutions are defined inductively
as follows:
\begin{align*}
M,N::&= x \mid  MN \mid \lambda x^A .M \mid s\ci M \\
s,q::&= id \mid \pi_x \mid \langle s\mi  N\is x \rangle \mid s\ci
q
\end{align*}
Rule $(vii)$ of Definition~\ref{type} is changed to\\
$
\begin{array}{lll}
(vii)& \Gamma,x:A\vdash\pi_x \tri\Gamma &\\
\end{array}
$\\[5pt]
Rule $(vii)$ of Definition~\ref{modeli} is changed to\\
$
\begin{array}{ll}
(vii)& (\Gamma,x:A\vdash\pi_x \tri\Gamma)\boldsymbol{\Rightarrow}   \Gamma\times A\overset{pr_1}{\arrow}\Gamma  \\[5pt]
\end{array}
$

\begin{example}
$$
\ruletwo{x:A,x:B\vdash \pi_x\tri x:A}{x:A\vdash x:A}{x:A,x:B\vdash
\pi_x\ci x:A}
$$
\end{example}
\begin{example}
$$
\ruletwo{\ruletwo{x:A,x:B,y:C\vdash \pi_y\tri
x:A,x:B}{x:A,x:B\vdash \pi_x\tri
x:A}{x:A,x:B,y:C\vdash\pi_y\ci\pi_x\tri x:A}}{x:A\vdash
x:A}{x:A,x:B,y:C\vdash (\pi_y\ci\pi_x)\ci x:A}
$$
\end{example}

The calculus~\ref{rav} is a draft. We write a similar calculus in
the new language, this is $\lambda\pi$.
\begin{definition}(The calculus $\lambda\pi$ without several rules).\\
 $
\begin{array}{lll}
(Beta) & (\lambda x.M)N\rightarrow \langle id\mi N\is x\rangle\ci M &\\
(Abs) & s\ci \lambda x.M\rightarrow\lambda x.\langle \pi_{x}\ci
s\mi x\is
x\rangle\ci M &\\
(App) & s\ci (MN)\rightarrow(s\ci  M)(s\ci  N) &\\
(ConsVar) & \langle s\mi N\is x\rangle\ci x\rightarrow N &\\
(New) & \langle s\mi N\is x\rangle\ci y\rightarrow  s\ci y & (x\not\equiv y)\\
(IdVar) &  id\ci x\rightarrow x &\\
(Clos) & s\ci q\ci M\rightarrow (s\ci q)\ci M &\\
(Ass) & s\ci q\ci  r\rightarrow (s\ci q)\ci r &\\
(IdR) & s\ci id\rightarrow s &\\
(IdShift) & id\ci \pi_{x}\rightarrow\pi_{x} &\\
(ConsShift) & \langle s\mi N\is x\rangle\ci \pi_{x}\rightarrow s &\\
(Map) & s\ci \langle q\mi N\is x\rangle\rightarrow\langle
s\ci q \mi s\ci N\is x\rangle &\\
(\pi_1) & \pi_{x}\ci y\rightarrow y & (x\not\equiv y)\\
(\pi_2) & (s\ci\pi_{x})\ci y\rightarrow s\ci y & (x \not\equiv
y)\\
(\alpha_1) & \lambda x.M\rightarrow\lambda y.\langle\pi_{y}\mi
y\is x\rangle\ci M & (*)
\end{array}$
\\
where $(*)$ is some restriction on the variables: if the variable
$x$ occurs freely in $\lambda x.M$, we can rename $x$ to a ``good"
variable.
\end{definition}
Example~\ref{primer} now looks like this:
\begin{example}\label{primer2}
$$\begin{array}{ll}
(\lambda x y.x)\,y & \\
\rightarrow\langle id\mi y\is x\rangle\ci \lambda y.x &
Beta\\
\rightarrow\lambda y.\langle\pi_y\ci \langle id\mi y\is
x\rangle\mi y\is y\rangle\ci x & Abs\\
 \rightarrow\lambda
y.(\pi_y\ci \langle id\mi y\is x\rangle)\ci x &
New\\
\rightarrow\lambda y.\langle\pi_y\ci id\mi \pi_y\ci y\is
x\rangle\ci x & Map\\
 \rightarrow \lambda y.\pi_y\ci
y & ConsVar\\
\rightarrow\lambda z.\langle\pi_z\mi z\is y\rangle\ci \pi_y\ci y &
\alpha_1\\
\rightarrow\lambda z.(\langle\pi_z\mi z\is y\rangle\ci \pi_y)\ci y
& Clos\\
 \rightarrow\lambda z.\pi_z\ci
y & ConsShift\\
\rightarrow\lambda z.y & \pi_1
\end{array}
$$
\end{example}

We were able to apply $\alpha_1$ because $y$ occurs freely in
$\lambda y.\pi_y\circ y$ (the rightmost occurrence is free).

Let's try to write the formal analogue of Definition~\ref{type}
for untyped terms and substitutions. Contexts are now simply
finite lists of variables with multiplicity (i.e., repetitions are
permitted).

A \emph{judgement} is now an expression of the form $\Gamma\vdash
M$ or of the form $\Gamma\vdash s\tri\Delta$, where $\Gamma$ and
$\Delta$
are contexts,  $M$ is a term, and $s$ is a substitution.\\
$\Gamma\vdash M$ means that $M$ is a well-formed term in the
context
$\Gamma$.\\
$\Gamma\vdash s\tri\Delta$ means that $s$ is a well-formed
substitution for $\Delta$ over $\Gamma$.
\begin{definition}(Well-formed terms and substitutions).\label{well}\\
 $
\begin{array}{lll}
(i)& \Gamma,x\vdash x &\\[5pt]
(ii)& \ruleone{\Gamma\vdash
x}{\Gamma,y\vdash x} & (x\not\equiv y) \\[15pt]
(iii)& \ruletwo{\Gamma\vdash M}{\Gamma\vdash N}{\Gamma\vdash
MN} &\\[15pt]
(iv)& \ruleone{\Gamma,x\vdash M}{\Gamma\vdash\lambda x.M} &\\[15pt]
(v)& \ruletwo{\Gamma\vdash s\tri\Delta}{\Delta\vdash
M}{\Gamma\vdash
 s\ci M} &\\[15pt]
(vi)& \Gamma\vdash id\tri\Gamma &\\[5pt]
(vii)& \Gamma,x\vdash\pi_{x} \tri\Gamma &\\[5pt]
(viii)& \ruletwo{\Gamma\vdash s\tri\Delta}{\Gamma\vdash
N}{\Gamma\vdash
\langle s\mi N\is x\rangle\tri\Delta ,x} &\\[15pt]
(ix)& \ruletwo{\Gamma\vdash s\tri\Delta}{\Delta\vdash
q\tri\Sigma}{\Gamma\vdash s\ci q\tri\Sigma} &
\end{array}
$
 \end{definition}
\begin{example}
$$
\ruleone{x,x\vdash x}{x,x,y\vdash x}
$$
\end{example}
\begin{example}
$$
\ruletwo{x,x\vdash \pi_x\tri x}{x\vdash x}{x,x\vdash \pi_x\ci x}
$$
\end{example}
\begin{example}
$$
\ruletwo{\ruletwo{x,x,y\vdash \pi_y\tri x,x}{x,x\vdash \pi_x\tri
x}{x,x,y\vdash\pi_y\ci\pi_x\tri x}}{x\vdash x}{x,x,y\vdash
(\pi_y\ci\pi_x)\ci x}
$$
\end{example}
\begin{example}
$$
\ruleone{\ruleone{x,x\vdash x}{x\vdash\lambda x.x}}{\vdash\lambda
x.\lambda x.x}
$$
\end{example}

 All usual $\lambda$-terms (without explicit substitutions) are well-formed. But there are some restrictions  on subscripts of the symbols $\pi_x$. For example, a term of the form $\lambda x.\pi_x\circ M$ is well-formed if
 $M$is well-formed
$$
\ruleone{\ruletwo{\Gamma,x\vdash\pi_x\tri\Gamma}{\ruledot{\Gamma\vdash
M}}{\Gamma,x\vdash \pi_x\ci M}}{\Gamma\vdash\lambda x.\pi_x\ci M}
$$\\[5pt]
but a term of the form $\lambda x.\pi_y\circ M$ is never
well-formed
$$
\ruleone{\ruletwo{\Gamma,y\vdash
\pi_y\tri\Gamma}{\ruledot{\Gamma\vdash M}}{\Gamma,x\vdash \pi_y\ci
M \using(?)}}{\Gamma\vdash\lambda x.\pi_y\ci M}
$$
Reducts of well-formed terms and substitutions are well-formed,
hence reducts of usual $\lambda$-terms are well-formed. We will
work only with well-formed terms and substitutions.

But there is a problem: we can  not reduce such term as
$\pi_y\circ y$. We can reduce $\lambda y.\pi_y\circ y$ (to
$\lambda z.y$), but not $\pi_y\circ y$. It is unpleasant to have
such normal forms. Hence we introduce a new idea. So far we have
one step reductions $M_1\arrow M_2$ and $s_1\arrow s_2$ defined on
the sets of terms and substitutions respectively. We introduce
also a one step reduction\\
 $\Gamma_1\vdash M_1\ri\Gamma_2\vdash
M_2$ defined on the set of judgements of the form $\Gamma\vdash
M$. Really we need only derivable  judgements in the sense of
Definition~\ref{well}.

\begin{definition}(Compatible closure).\\[10pt]
$
\begin{array}{ccc}
& \ruleone{ M_1\arrow M_2}{ \lambda
x.M_1\arrow\lambda x.M_2}\\[20pt]
& \ruleone{ M_1\arrow M_2}{ M_1N\arrow  M_2N} & \ruleone{
N_1\arrow N_2}{
MN_1\arrow  MN_2}\\[20pt]
& \ruleone{  s_1\arrow s_2}{s_1\ci M\arrow s_2\ci M} & \ruleone{
M_1\arrow
M_2}{s\ci M_1\arrow s\ci M_2}\\[20pt]
& \ruleone{ s_1\arrow s_2}{ \langle s_1\mi N\is x\rangle\arrow
\langle s_2\mi N\is x\rangle} & \ruleone{ N_1\arrow N_2}{ \langle
s\mi N_1\is x\rangle\arrow \langle
s\mi N_2\is x\rangle}\\[20pt]
& \ruleone{ s_1\arrow s_2}{ s_1\ci q\arrow s_2\ci q} & \ruleone{
q_1\arrow q_2}{ s\ci q_1\arrow s\ci
q_2}\\[20pt]
& \ruleone{M_1\arrow M_2}{\Gamma\vdash M_1\ri
\Gamma\vdash M_2}\\[20pt]
\end{array}
$
\end{definition}
At last, we add one more rewrite rule (called $\alpha_2$), which
can be applied to a judgement of the form $\Gamma\vdash M$ and
renames a variable in the context $\Gamma$. For example, the term
$\pi_y\circ y$ can be well-formed only in a context of the form
$\Delta,y$. We can apply $\alpha_2$ to the judgement
$\Delta,y\vdash \pi_y\circ y$ and obtain the judgement
$\Delta,z\vdash\pi_z\circ y$, which then reduces to
$\Delta,z\vdash y$. We denote by $\Lambda\pi$ the set of derivable
 judgements of the form $\Gamma\vdash M$. For $\lambda\pi$ this set  is like
$\Lambda$ for $\lambda\beta$ and $\ri$ is the main one step
reduction.

The rest of the paper is organized as follows. Section~2 defines
the sets of contexts, terms, and substitutions. Section~3 provides
a definition of free variables. Section~4 introduces the calculus
$\lambda\pi$.  Section~5  proves Subject reduction.  Section~6
proves several useful properties of $\lambda\pi$. Section~7
compares $\lambda\pi$ with $\lambda\sigma$ from~\cite{Abadi}.
Section~8 defines the $\alpha$-equivalence. Section~9 proves that
$\lambda\pi$ is confluent. Section~10 shows that any computation
without $Beta$ is strongly normalized.
\newpage

\section{Terms and substitutions}
For accuracy, we will use metavariables for variables. For
example, beta-reduction rule would be written as:\\
$(\lambda \x.M)N\arrow\la id\mi N\is \x\ra\ci M$,\\
where $\x$ is a metavariable for variables, $M$ and $N$ are
metavariables for terms. Replacing $\x$ by the variable $x$, $M$
by the term $xx$, and $N$ by the term $y$, we obtain the following
concrete example of beta-reduction:\\
$(\lambda x.xx)y\arrow\la id\mi y\is x\ra\ci (xx)$

For simplicity  we  will work with  the untyped calculus. However,
we will use contexts.
\begin{definition}\label{pervoe} The symbols $x,y,z,\ldots$ are \emph{variables}. The symbols $M,N,L$ range over \emph{terms},
$s,q,r$  range over \emph{substitutions}, and $\x,\y,\z$  range
over variables (they are \emph{metavariables}). The sets of terms
and substitutions are defined inductively as follows:
\begin{align*}
M,N::&= \x \mid  MN \mid \lambda \x. M \mid s\ci M\\
s,q::&= id \mid \pi_{\x} \mid \langle s\mi  N\is \x \rangle \mid
s\ci q
\end{align*}
Note that\\[5pt]
 $\begin{array}{llll}
 s\ci M & \text{corresponds to} & M[s] &\text{ from ~\cite{Abadi}};\\
 s\ci q & \text{corresponds to} & q\ci s &\text{ from ~\cite{Abadi}};\\
\la s\mi N\is \x\ra & \text{corresponds to} & N\cdot s &\text{ from ~\cite{Abadi}};\\
\pi_{\x} & \text{corresponds to} & \uparrow &\text{ from
~\cite{Abadi}}.
   \end{array}$\\
   \end{definition}
   \begin{convention}\label{notation2}
 Outermost parentheses are not written.\\
  Outermost parentheses around $s$ in  $\la s\mi N\is
\x\ra$ are not written.\\
 Outermost parentheses around $N$ in  $\la s\mi N\is
\x\ra$ are not written.
 \end{convention}
\begin{convention}\label{notation1}$ $\\[3pt]
$\begin{array}{lll} MN_1\ldots  N_k & \text{is shorthand for} &
 ((MN_1)\ldots)N_k\\
 \lambda \x_1\ldots\x_k.M & \text{is shorthand for}
 & \lambda \x_1.(\ldots(\lambda \x_k.M))\\
 \lambda \x.MN_1\ldots N_k & \text{is shorthand for}
 & \lambda \x.(MN_1\ldots N_k)\\
s\ci MN_1\ldots N_k & \text{is shorthand for}
 & s\ci(MN_1\ldots N_k)\\
 \lambda \x.s\ci M & \text{is shorthand for}
 & \lambda \x.(s\ci M)\\
 s\ci \lambda \x.M & \text{is shorthand for}
 & s\ci(\lambda \x.M)\\
 s_1\ci\ldots\ci s_k\ci s_{k+1} & \text{is shorthand for}
 & s_1\ci(\ldots\ci(s_k\ci s_{k+1}))\\
 s_1\ci \ldots\ci s_k\ci M & \text{is shorthand for}
 & s_1\ci(\ldots\ci(s_k\ci M))\\
 \langle s\mi N_1\is \y_1\mi \ldots\mi N_n\is \y_n \rangle &
\text{is shorthand for} & \langle\langle\ldots\la\langle s\mi
N_1\is \y_1\rangle\mi
 N_2\is \y_2\rangle\mi\ldots\rangle\mi N_n\is \y_n\rangle
 \end{array}$
\end{convention}
\begin{example}$\begin{array}{lll}
id\ci id\ci x & \text{is shorthand for} & id\ci (id\ci x)
\end{array}$
\end{example}
\begin{example}$\begin{array}{lll}
\lambda x.id\ci y & \text{is shorthand for} & \lambda x.(id\ci y)
\end{array}$
\end{example}
\begin{example}$\begin{array}{lll}
id\ci x(yz) & \text{is shorthand for} & id\ci (x(yz))
\end{array}$
\end{example}
\begin{example}$ $\\
$\begin{array}{lll} \la id\ci id\mi id\ci y\is x\ra & \text{is
shorthand for} & \la (id\ci id)\mi(id\ci y)\is x\ra\
\end{array}$
\end{example}
\begin{example}$ $\\
$\begin{array}{lll} id\ci\lambda x.\pi_x\ci\lambda y.z & \text{is
shorthand for} & id\ci(\lambda x.(\pi_x\ci(\lambda y.z)))
\end{array}$
\end{example}
\begin{example}$ $\\
$\begin{array}{lll} \la id\mi y\is x\mi z\is x\ra & \text{is
shorthand for} & \la\la id\mi y\is x\ra\mi z\is x\ra
\end{array}$
\end{example}

For  a more precise definition of terms and substitutions see
Section 11.
 \begin{definition}
A \emph{context} is a possibly empty, finite list of variables
with multiplicity (i.e., repetitions are permitted). The symbols
$\Gamma,\Delta,\Sigma,\Psi$ range over contexts.
\end{definition}
\begin{example}
The list $x,x,y$ is a context.
\end{example}
\begin{definition}
A \emph{judgement} is an expression of the form $\Gamma\vdash M$
or of the form $\Gamma\vdash s\tri\Delta$.
\end{definition}
A judgement of the form $\Gamma\vdash M$  means ``$M$ is a
well-formed term in the context $\Gamma$." A judgement of the form
$\Gamma\vdash s\tri\Delta$  means ``$s$ is a well-formed
substitution for $\Delta$ over $\Gamma$."
\begin{definition}\label{defsequentsder}(The inference rules for judgements).\\
$
\begin{array}{lll}
(i)& \Gamma,\x\vdash \x &\\[5pt]
(ii)& \ruleone{\Gamma\vdash
\x}{\Gamma,\y\vdash \x} & (\x\neq \y) \\[15pt]
(iii)& \ruletwo{\Gamma\vdash M}{\Gamma\vdash N}{\Gamma\vdash
MN} &\\[15pt]
(iv)& \ruleone{\Gamma,\x\vdash M}{\Gamma\vdash\lambda \x.M} &\\[15pt]
(v)& \ruletwo{\Gamma\vdash s\tri\Delta}{\Delta\vdash
M}{\Gamma\vdash
 s\ci M} &\\[15pt]
(vi)& \Gamma\vdash id\tri\Gamma &\\[5pt]
(vii)& \Gamma,\x\vdash\pi_{\x} \tri\Gamma &\\[5pt]
(viii)& \ruletwo{\Gamma\vdash s\tri\Delta}{\Gamma\vdash
N}{\Gamma\vdash
\langle s\mi N\is \x\rangle\tri\Delta ,\x} &\\[15pt]
(ix)& \ruletwo{\Gamma\vdash s\tri\Delta}{\Delta\vdash
q\tri\Sigma}{\Gamma\vdash s\ci q\tri\Sigma} &
\end{array}
$\\
\end{definition}
Here $\x\neq\y$ means that $\x$ and $\y$ denote distinct
variables.
\begin{example}
$$
\ruletwo{x,x,y\vdash \pi_y\tri x,x}{\ruletwo{x,x\vdash \pi_x\tri
x}{x\vdash x}{x,x\vdash \pi_x\ci x}}{x,x,y\vdash \pi_y\ci\pi_x\ci
x}
$$
\end{example}
\begin{example}
$$
\ruletwo{\ruleone{\ruledot{\Gamma,x\vdash M}}{\Gamma\vdash\lambda
x.M}}{\ruledot{\Gamma\vdash N}}{\Gamma\vdash (\lambda x.M)N}
$$
\end{example}
\begin{example}
$$
\ruletwo{\ruletwo{\Gamma\vdash id\tri\Gamma}{\ruledot{\Gamma\vdash
N}}{\Gamma\vdash\la id\mi N\is
x\ra\tri\Gamma,x}}{\ruledot{\Gamma,x\vdash M}}{\Gamma\vdash\la
id\mi N\is x\ra\ci M}
$$
\end{example}
\begin{lemma}[Generation lemma]$ $\\
Each derivation of $\,\Gamma,\x\vdash \x$ is an application of the
rule~(i).\\
Each derivation of $\,\Gamma,\y\vdash \x$ $($where $\x\neq \y)$ is
an application of the rule~(ii) to some derivation of
$\,\Gamma\vdash
\x$.\\
Each derivation of $\,\Gamma\vdash MN$ is an application of the
rule~(iii) to some  derivations of $\,\Gamma\vdash M$ and
$\,\Gamma\vdash N$.\\
Each derivation of $\,\Gamma\vdash \lambda \x.M$  is an
application of the rule~(iv) to some derivation of
$\,\Gamma,\x\vdash M$.\\
Each derivation of $\,\Gamma\vdash s\ci M$ is an application of
the rule~(v) to some derivations of $\,\Gamma\vdash s\tri\Delta$
and $\,\Delta\vdash M$ for some
$\Delta$.\\
Each derivation of $\,\Gamma\vdash id\tri\Delta$ is an application
of the
rule~(vi), where $\Delta$ coincides with $\Gamma$.\\
Each derivation of $\,\Delta\vdash \pi_{\x}\tri\Gamma$ is an
application of the rule~(vii), where $\Delta$ coincides with $\Gamma,\x$.\\
Each derivation of $\,\Gamma\vdash\langle s\mi N\is
\x\rangle\tri\Sigma$ is an application of the rule~(viii) to some
derivations of
$\,\Gamma\vdash s\tri\Delta$ and $\,\Gamma\vdash N$ for some $\Delta$, where $\Sigma$ coincides with $\Delta,\x$.\\
Each derivation of $\,\Gamma\vdash s\ci q\tri\Sigma$ is an
application of the rule~(ix) to some derivations of
$\,\Gamma\vdash s\tri\Delta$ and $\,\Delta\vdash q\tri\Sigma$ for
some $\Delta$.
\end{lemma}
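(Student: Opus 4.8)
The plan is to prove the lemma by \emph{inversion}: since each clause asks only which rule can sit at the root of a derivation whose conclusion has a prescribed shape, no induction is needed, and it suffices to inspect the last rule applied. Every derivation ends with exactly one of the nine rules (i)--(ix), so the whole argument reduces to showing that, for a conclusion of each given form, all but one of the rules are excluded, and then reading the premises off the surviving rule.

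The single tool that does the work is \emph{unique readability} of the grammar of Definition~\ref{pervoe} (whose formal treatment is the subject of Section~11): every term is, in exactly one way, a variable, an application $MN$, an abstraction $\lambda\x.M$, or a closure $s\ci M$; every substitution is, in exactly one way, $id$, $\pi_{\x}$, a pair $\la s\mi N\is\x\ra$, or a composition $s\ci q$; moreover the immediate subcomponents are uniquely determined, and every nonempty context has a unique decomposition $\Gamma,\x$ into an initial segment and a last variable. I would first record these facts, noting in particular that the two uses of $\ci$ do not clash: rule (v) concludes a \emph{term} $s\ci M$ while rule (ix) concludes a \emph{substitution} $s\ci q$, and terms and substitutions are disjoint categories, so the judgement form ($\Gamma\vdash M$ versus $\Gamma\vdash s\tri\Delta$) already separates these cases.

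With this in hand the case analysis is mechanical. For a conclusion $\Gamma\vdash M$ the shape of $M$ selects the candidate rule: an application forces (iii), an abstraction forces (iv), and a closure forces (v); reading back the premises of the chosen rule yields exactly the derivations named in the lemma. For a conclusion $\Gamma\vdash s\tri\Delta$ the shape of $s$ likewise selects (vi), (vii), (viii), or (ix), and one reads off the forced relation between contexts (namely $\Delta=\Gamma$ for $id$, source $=\Gamma,\x$ for $\pi_{\x}$, target $=\Delta,\x$ for a pair) directly from the conclusion of that rule. Wherever a premise mentions a context that does not occur in the conclusion, that is the middle $\Delta$ of clauses (v), (viii), and (ix), it is recovered only up to existential quantification, which is precisely what the lemma states.

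The one place that needs genuine care, and the step I would flag as the crux, is the variable case, where the subject is a single variable $\x$: both (i) and (ii) conclude a judgement whose subject is a variable, so unique readability alone does not separate them. Here I would combine the unique last-element decomposition of the context with the side condition of~(ii). In the clause for $\Gamma,\x\vdash\x$ the last context variable coincides with the subject, so (ii), which requires the last variable to differ from the subject, is impossible, leaving only the axiom (i); in the clause for $\Gamma,\y\vdash\x$ with $\x\neq\y$ the last variable differs from the subject, so (i), which would force them equal, is impossible, leaving only (ii), applied to a derivation of $\Gamma\vdash\x$. This disposes of both variable clauses and completes the inversion.
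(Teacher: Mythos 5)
Your proposal is correct, and it is precisely the ``straightforward'' inversion argument the paper asserts without spelling out: case analysis on the last rule, driven by unique readability of the syntax, with the only delicate point being the separation of rules (i) and (ii) via the side condition $\x\neq\y$, which you handle correctly. Nothing further is needed.
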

\begin{proof}
The proof is straightforward.
\end{proof}
\begin{lemma}
 If a judgement of the form $\Gamma\vdash s\tri\Delta$ is
derivable, then $\Delta$ is uniquely defined for given $\Gamma$
and $s$.
\end{lemma}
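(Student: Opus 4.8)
The plan is to argue by induction on the structure of the substitution $s$, appealing to the Generation Lemma at each step. The Generation Lemma guarantees that the outermost syntactic form of $s$ determines both the last inference rule applied and the shape of its premises, so no analysis of the derivation is needed beyond reading off that form; in effect the induction on $s$ and the induction on the derivation coincide.

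In the two base cases the target is read off directly. If $s\equiv id$, the Generation Lemma forces $\Delta$ to coincide with $\Gamma$, so $\Delta$ is determined. If $s\equiv\pi_{\x}$, the Generation Lemma forces $\Gamma$ to coincide with $\Delta,\x$; hence $\Delta$ is exactly $\Gamma$ with its final variable deleted, which is again uniquely determined by $\Gamma$ and $s$ (derivability of $\Gamma\vdash\pi_{\x}\tri\Delta$ already guarantees that the last variable of $\Gamma$ is indeed $\x$).

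For the cons case $s\equiv\la s'\mi N\is\x\ra$, the Generation Lemma reduces any derivation to rule~(viii) applied to derivations of $\Gamma\vdash s'\tri\Delta'$ and $\Gamma\vdash N$, with $\Delta$ equal to $\Delta',\x$. The induction hypothesis fixes $\Delta'$ from $\Gamma$ and $s'$, whence $\Delta=\Delta',\x$ is fixed. For the composition case $s\equiv s'\ci q$, the Generation Lemma reduces any derivation to rule~(ix) applied to derivations of $\Gamma\vdash s'\tri\Sigma$ and $\Sigma\vdash q\tri\Delta$ for some intermediate context $\Sigma$. First the induction hypothesis applied to $s'$ pins down $\Sigma$ from $\Gamma$ and $s'$; only then, with the source of $q$ now known to be this unique $\Sigma$, does the induction hypothesis applied to $q$ pin down $\Delta$ from $\Sigma$ and $q$.

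I expect the composition case to be the only real point of care. The intermediate context $\Sigma$ is existentially quantified in the Generation Lemma, so a priori two derivations of $\Gamma\vdash s'\ci q\tri\Delta$ could route through different intermediate contexts. The key is that the induction hypothesis on $s'$ removes this ambiguity \emph{before} the second appeal to the induction hypothesis is made, after which the two determinations chain together to fix $\Delta$. The remaining cases are routine bookkeeping.
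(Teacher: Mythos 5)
Your proposal is correct and follows essentially the same route as the paper: structural induction on $s$, with the Generation Lemma pinning down the last rule in each case, and in the composition case using the induction hypothesis on the first substitution to fix the intermediate context before applying it to the second. Your explicit remark about the existentially quantified intermediate context is exactly the point the paper's Case~4 handles implicitly.
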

\begin{proof}The proof is by induction over the structure of
$s$ (see Definition~\ref{pervoe}).\\
Case 1: $s$ is $id$. This implies that $\Delta$ coincides with
$\Gamma$.\\
Case 2: $s$ has the form $\pi_{\x}$ for some $\x$. This implies
that $\Gamma$
coincides with $\Delta,\x$.\\
Case 3: $s$ has the form $\langle q\mi N\is \x\rangle$ for some
$q,N,\x$. By Generation lemma,  we can derive $\Gamma\vdash
q\tri\Sigma$ for some $\Sigma$. By the induction hypothesis,
$\Sigma$ is uniquely defined for $\Gamma$ and $q$. Then $\Delta$
coincides with
$\Sigma,\x$.\\
Case 4: $s$ has the form $q\ci r$ for some $q,r$. By Generation
lemma, we can derive $\Gamma\vdash q\tri\Sigma$ for some $\Sigma$.
By the induction hypothesis, $\Sigma$ is uniquely defined for
$\Gamma$ and $q$. By Generation lemma, we can derive $\Sigma\vdash
r\tri\Delta$, where $\Delta$ is uniquely defined for $\Sigma$ and
$r$.
\end{proof}
\begin{lemma}For any derivable judgement, there is a unique
derivation.
\end{lemma}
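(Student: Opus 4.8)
The plan is to prove this by structural induction on the term $M$ or the substitution $s$ that is the subject of the judgement, treating the two judgement forms $\Gamma\vdash M$ and $\Gamma\vdash s\tri\Delta$ simultaneously; the induction must be simultaneous because a term may contain a substitution through $s\ci M$ and a substitution may contain a term through $\la s\mi N\is\x\ra$. The Generation lemma does almost all of the work: for every syntactic shape of the subject it names the unique rule that can produce the conclusion and identifies the premises up to the choice of any context not appearing in the conclusion. So in each case I would first invoke the Generation lemma to fix the last rule of an arbitrary derivation, then argue that all of its premises are fully determined judgements, and finally apply the induction hypothesis to each premise.

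First I would dispatch the cases where the last rule is an axiom. For $M=\x$ the Generation lemma gives two mutually exclusive possibilities according to whether the last variable of the context equals $\x$ (rule (i), a leaf) or is some $\y\neq\x$ (rule (ii) applied to the judgement obtained by deleting that last $\y$); since the context is fixed these cannot both occur, the leaf case is immediate, and in the other case the premise is a smaller judgement to which the induction hypothesis applies. The substitution axioms $s=id$ and $s=\pi_\x$ (rules (vi), (vii)) are leaves and hence trivially have unique derivations.

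The compound cases with no hidden intermediate context are routine. For $M=M_1M_2$ (rule (iii)), $M=\lambda\x.M_1$ (rule (iv)), and $s=\la q\mi N\is\x\ra$ (rule (viii)) the Generation lemma names premises whose contexts and subjects are all determined by $\Gamma$, by the subject, and, in the last case, by the codomain $\Delta$ (from which the codomain of $q$ is recovered by deleting the last variable). In each case the induction hypothesis gives a unique derivation of every premise, hence a unique derivation of the conclusion.

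The main obstacle, and the only genuinely delicate point, is the two rules (v) and (ix) in which an intermediate context is not visible in the conclusion: a derivation of $\Gamma\vdash s\ci M$ passes through some $\Delta$ with $\Gamma\vdash s\tri\Delta$ and $\Delta\vdash M$, and a derivation of $\Gamma\vdash s\ci q\tri\Sigma$ passes through some $\Delta$ with $\Gamma\vdash s\tri\Delta$ and $\Delta\vdash q\tri\Sigma$. Here one must rule out two derivations that use different middle contexts. This is exactly what the preceding lemma supplies: if $\Gamma\vdash s\tri\Delta$ is derivable then $\Delta$ is uniquely determined by $\Gamma$ and $s$. With $\Delta$ thus pinned down, both premises become fully determined judgements, the induction hypothesis yields uniqueness of each, and the induction is complete.
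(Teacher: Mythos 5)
The paper itself states this lemma with no proof at all (the \texttt{proof} environment that follows in the source belongs to the next lemma, on decidability), so there is no official argument to compare against; but your proof is plainly the one the paper's arrangement intends, since it uses exactly the two results stated immediately before: the Generation lemma to fix the last rule and its premises, and the lemma that $\Delta$ is uniquely determined by $\Gamma$ and $s$ in a derivable $\Gamma\vdash s\tri\Delta$ to resolve the only genuine ambiguity, namely the hidden intermediate context in rules (v) and (ix). You correctly identify that this is the crux; everything else is bookkeeping.

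One small repair is needed in the bookkeeping. You announce structural induction on the subject $M$ or $s$, but in the variable case the premise of rule (ii) is $\Gamma\vdash \x$, whose subject is the \emph{same} variable $\x$ --- only the context got shorter --- so the induction hypothesis, as you have set it up, does not apply to it. Your own phrase ``smaller judgement'' points to the fix: induct on the total size of the judgement (length of the context plus size of the subject), or lexicographically on the pair (subject, context length), or simply induct on a given derivation and show any other derivation of the same judgement coincides with it. With any of these measures every case goes through exactly as you wrote it, so this is an imprecision in the stated measure rather than a flaw in the argument.
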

\begin{lemma}The problem of derivability for judgements is
decidable.
\end{lemma}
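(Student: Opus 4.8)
The plan is to exhibit a recursive decision procedure that follows the syntactic structure of the term or substitution occurring in the judgement. The essential point is that, by the Generation lemma, the rules of Definition~\ref{defsequentsder} are \emph{syntax-directed}: for a term or substitution of each syntactic shape there is exactly one rule that could have been applied last, and the Generation lemma records precisely which premises must then be derivable. So I would read each clause of the Generation lemma as a reduction of one derivability question to finitely many strictly simpler ones.

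First I would dispose of judgements of the form $\Gamma\vdash\x$. By the Generation lemma such a judgement comes from rule (i) when the last variable of $\Gamma$ is $\x$, and otherwise, when $\Gamma=\Gamma'\mi\y$ with $\x\neq\y$, it must come from rule (ii) applied to $\Gamma'\vdash\x$; the empty context derives nothing. Hence $\Gamma\vdash\x$ is derivable iff $\x$ occurs in $\Gamma$, which is decidable by scanning $\Gamma$. For the other shapes I would define, by mutual structural recursion, a procedure $\mathrm{WF}(\Gamma,M)$ deciding $\Gamma\vdash M$ and a partial procedure $\mathrm{Dom}(\Gamma,s)$ returning the unique $\Delta$ with $\Gamma\vdash s\tri\Delta$ derivable, or reporting failure. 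For $MN$ I recurse on $\Gamma\vdash M$ and $\Gamma\vdash N$; for $\lambda\x.M$ on $\Gamma\mi\x\vdash M$. For $id$ I set $\mathrm{Dom}(\Gamma,id)=\Gamma$; for $\pi_{\x}$ I succeed with $\mathrm{Dom}=\Delta$ exactly when $\Gamma=\Delta\mi\x$; for $\la q\mi N\is\x\ra$ I compute $\Delta'=\mathrm{Dom}(\Gamma,q)$, check $\mathrm{WF}(\Gamma,N)$, and return $\Delta'\mi\x$.

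The crux is the two rules that introduce an intermediate context invisible in the conclusion, namely (v) for $s\ci M$ and (ix) for $q\ci r$. Here the Generation lemma leaves an existential quantifier, ``there exists $\Delta$ such that $\Gamma\vdash s\tri\Delta$ and $\Delta\vdash M$,'' which a priori demands a search over infinitely many contexts. This is exactly where I would invoke the preceding lemma, which guarantees that $\Delta$ is uniquely determined by $\Gamma$ and $s$: I simply compute $\Delta=\mathrm{Dom}(\Gamma,s)$ and then decide $\mathrm{WF}(\Delta,M)$ in case (v), and in case (ix) compute $\Sigma=\mathrm{Dom}(\Gamma,q)$ and return $\mathrm{Dom}(\Sigma,r)$. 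I expect this coupling of the Generation lemma with the uniqueness lemma to be the main obstacle, since without uniqueness the procedure would not be effective.

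Finally I would establish termination and correctness. Termination follows from the well-founded measure given by the pair consisting of the syntactic size of the term or substitution and the length of $\Gamma$, ordered lexicographically: every recursive call either strictly decreases the syntactic size or, in the variable case, keeps it fixed while shortening $\Gamma$. Correctness is then a routine induction along this measure, each case justified by the matching clause of the Generation lemma and, for $s\ci M$ and $q\ci r$, by the uniqueness lemma. Derivability of $\Gamma\vdash M$ is decided by $\mathrm{WF}(\Gamma,M)$, and derivability of $\Gamma\vdash s\tri\Delta$ by computing $\mathrm{Dom}(\Gamma,s)$ and comparing the result with $\Delta$; hence derivability of judgements is decidable.
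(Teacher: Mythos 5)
Your proposal is correct and is essentially the paper's own argument: the paper's proof is the single sentence ``We try to construct a derivation from the bottom up,'' and your procedure is precisely that bottom-up construction, made explicit via the syntax-directedness recorded in the Generation lemma and the uniqueness of $\Delta$ in $\Gamma\vdash s\tri\Delta$ for the cases $(v)$ and $(ix)$. You have simply supplied the details (including the termination measure) that the paper leaves implicit.
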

\begin{proof} We try to construct a derivation from the bottom up.
\end{proof}
\begin{example}\label{notwellformterm}Not each term is well-formed in any context. A
term of the form $\lambda \x.\pi_{\y}\ci M $ is not well-formed in
any context if $\x\neq \y$.
$$
\ruleone{\ruletwo{\Gamma,y\vdash
\pi_y\tri\Gamma}{\ruledot{\Gamma\vdash M}}{\Gamma,x\vdash \pi_y\ci
M \using(?)}}{\Gamma\vdash\lambda x.\pi_y\ci M}
$$
\end{example}
\begin{example}A term of the form $(\pi_{\x}\ci M)(\pi_{\y}\ci N)$ is not
well-formed in any context if $\x\neq \y$.
$$
\ruletwo{\ruletwo{\Gamma,x\vdash\pi_x\tri\Gamma}{\ruledot{\Gamma\vdash
M}}{\Gamma,x\vdash\pi_x\circ M}}
{\ruletwo{\Gamma,y\vdash\pi_y\tri\Gamma}{\ruledot{\Gamma\vdash
N}}{\Gamma,y\vdash\pi_y\circ N}} {?\vdash(\pi_x\circ M)(\pi_y\circ
N)}
$$

\end{example}
\begin{example}\label{bedsubst}A substitution of the form $\la s\mi N\is
\x\ra\ci\pi_{\y}$ is not well-formed in any contexts if $\x\neq
\y$.
$$
\ruletwo{\ruletwo{\ruledot{\Gamma\vdash s\tri\Delta}}{\Gamma\vdash
N}{\Gamma\vdash\la s\mi N\is
x\ra\tri\Delta,x}}{\Delta,y\vdash\pi_y\tri\Delta\using(?)}{\Gamma\vdash\la
s\mi N\is x\ra\ci\pi_y\tri\Delta}
$$
\end{example}
\newpage

\section{Free variables}
Consider some term $M$ and some variable $\x$. To each free
occurrence of $\x$ in $M$ assign its \emph{level}, which is a
natural number $\geqslant 1$. The only occurrence of $x$ in the
term $x$ has  level $1$. We can immediately bind this occurrence
and get $\lambda x.x$. The only occurrence of $x$ in the term
$\pi_y\ci x$ has  level $2$. We can write the term $\lambda
x.\pi_y\ci x$, but this term is not well-formed (see
Example~\ref{notwellformterm}). If we want to bind this occurrence
and get a well-formed term, we must write $\lambda x y.\pi_y\ci
x$, hence the level is $2$. The only occurrence of $x$ in the term
$\pi_z\ci\pi_y\ci x$ has  level $3$. The simplest way to bind this
occurrence and get a well-formed term is $\lambda x y
z.\pi_z\ci\pi_y\ci x$. Subscripts of the symbols $\pi_a$ are not
considered as free occurrences.
\begin{definition} The symbols $\mathcal{A,B}$ range over infinite
sequences of sets
$$\langle \mathcal{A}_1,\mathcal{A}_2,\mathcal{A}_3,\ldots\rangle\in Sets^\omega$$
$$\langle \mathcal{B}_1,\mathcal{B}_2,\mathcal{B}_3,\ldots\rangle\in Sets^\omega$$
By $\mathcal{A}\cup\mathcal{B}$ denote
$$\langle\mathcal{A}_1\cup\mathcal{B}_1,\mathcal{A}_2\cup\mathcal{B}_2,\mathcal{A}_3\cup\mathcal{B}_3,\ldots\rangle$$
\end{definition}
In fact, we need only finite sets whose elements are variables. To
each term $L$ we assign an infinite sequence of sets
$$FV(L)\equiv\langle
 FV_1(L),FV_2(L),FV_3(L),\ldots\rangle\in Sets^\omega$$
 The variables from the set $FV_i(L)$ have free occurrences of  level
 $i$ in $L$.
 The set of free variables of $L$ is $\bigcup_{i\geqslant 1}
 FV_i(L)$.
\begin{definition}(Free variables of terms). By definition, put\label{FV}\\
$
\begin{array}{ll}
(i) & FV(\x)=\langle \{\x\},\emptyset,\emptyset,\ldots\rangle\\
(ii) & FV(MN)=FV(M)\cup FV(N)\\
(iii) & FV(\lambda \x.M)=O_{\lambda \x}(FV(M))\\
(iv) & FV(s\ci M)=O_s(FV(M)),
\end{array}
$\\[5pt]
where\\[5pt] $
\begin{array}{ll}
(v) & O_{\lambda \x}(\mathcal{A})=
 \langle(\mathcal{A}_1\setminus\{\x\})\cup\mathcal{A}_2\,,\mathcal{A}_3,\mathcal{A}_4,\ldots\rangle\\
(vi) & O_{id}(\mathcal{A})=\mathcal{A}\\
(vii) &
O_{\pi_{\x}}(\mathcal{A})=\langle\emptyset,\mathcal{A}_1,\mathcal{A}_2,\ldots\rangle\\
(viii) & O_{s\circ q}(\mathcal{A})=O_s(O_{q}(\mathcal{A}))\\
(ix) & O_{\langle s\mi N\is \x\rangle}(\mathcal{A})=O_s(O_{\lambda
\x}(\mathcal{A}))\cup
 FV(N)
 \end{array}$
\end{definition}
\begin{corollary}$ $\\
 $
\begin{array}{l}
 FV_1(\lambda \x.M)=(FV_1(M)\setminus\{\x\})\cup FV_2(M)\\
 FV_{n+1}(\lambda \x.M)=FV_{n+2}(M)\quad (n\geqslant 1)\\
 FV(id\ci M)= FV(M)\\
 FV_1(\pi_{\x}\ci M)=\emptyset \\
 FV_{n+1}(\pi_{\x}\ci M)= FV_{n}(M)\quad (n\geqslant 1)\\
 FV(\langle s\mi N\is \x\rangle\ci M)= O_s(O_{\lambda
\x}(FV(M)))\cup FV(N)
\end{array}
$
\end{corollary}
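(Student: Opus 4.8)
The plan is to verify each of the six equalities by a direct unfolding of the relevant clauses of Definition~\ref{FV}, reading off the indicated component of the resulting infinite sequence of sets. No induction is needed: each line peels off exactly one outermost constructor and applies the matching operator from clauses~(v)--(ix), leaving the subterm data $FV(N)$ and the operator $O_s$ untouched. The only point that demands a little care is the index arithmetic of the two shifting operators $O_{\lambda\x}$ and $O_{\pi_\x}$, where one must keep the special behaviour of the first component separate from the uniform shift applied to the tail.

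For the first two lines, clause~(iii) gives $FV(\lambda\x.M)=O_{\lambda\x}(FV(M))$, and clause~(v) expands $O_{\lambda\x}(\mathcal{A})=\langle(\mathcal{A}_1\setminus\{\x\})\cup\mathcal{A}_2,\mathcal{A}_3,\mathcal{A}_4,\ldots\rangle$. Setting $\mathcal{A}=FV(M)$ and reading the first component yields $FV_1(\lambda\x.M)=(FV_1(M)\setminus\{\x\})\cup FV_2(M)$. The point to notice is that clause~(v) collapses the first two components into one and shifts the tail down by one, so that for every $k\geqslant 2$ the $k$-th component of $O_{\lambda\x}(\mathcal{A})$ equals $\mathcal{A}_{k+1}$; taking $k=n+1$ with $n\geqslant 1$ gives $FV_{n+1}(\lambda\x.M)=FV_{n+2}(M)$.

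The remaining four lines are handled the same way. The identity $FV(id\ci M)=FV(M)$ is immediate from clause~(iv) together with clause~(vi). For $\pi_\x$, clause~(iv) gives $FV(\pi_\x\ci M)=O_{\pi_\x}(FV(M))$, and clause~(vii) expands $O_{\pi_\x}(\mathcal{A})=\langle\emptyset,\mathcal{A}_1,\mathcal{A}_2,\ldots\rangle$; reading the first component gives $FV_1(\pi_\x\ci M)=\emptyset$, while the $(n+1)$-th component equals $\mathcal{A}_n=FV_n(M)$ for $n\geqslant 1$. Finally, combining clause~(iv) with clause~(ix) in a single step produces $FV(\langle s\mi N\is\x\rangle\ci M)=O_s(O_{\lambda\x}(FV(M)))\cup FV(N)$. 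I expect no genuine obstacle here: once the two shift operators are tracked correctly, every equality follows directly from the defining clauses.
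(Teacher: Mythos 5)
Your proposal is correct and matches the paper's treatment: the paper states this as an immediate corollary of Definition~\ref{FV} with no written proof, and your line-by-line unfolding of clauses (iii)--(ix), including the careful index bookkeeping for $O_{\lambda \x}$ and $O_{\pi_{\x}}$, is exactly the intended verification.
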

\begin{example}
$$FV(x)=\langle\{x\},\emptyset,\emptyset,\ldots\rangle$$
\end{example}
\begin{example}
$$FV(\pi_y\ci x)=\langle\emptyset,\{x\},\emptyset,\ldots\rangle$$
\end{example}
\begin{example}
$$FV(\pi_z\ci\pi_y\ci x)=\langle\emptyset,\emptyset,\{x\},\emptyset,\ldots\rangle$$
\end{example}
\begin{example}
$$FV(\lambda z.\pi_z\ci\pi_y\ci x)=\langle\emptyset,\{x\},\emptyset,\ldots\rangle$$
\end{example}
\begin{example}
$$FV(\lambda y z.\pi_z\ci\pi_y\ci x)=\langle\{x\},\emptyset,\emptyset,\ldots\rangle$$
\end{example}
\begin{example}
$$FV(\lambda x y z.\pi_z\ci\pi_y\ci x)=\langle\emptyset,\emptyset,\emptyset,\ldots\rangle$$
\end{example}
\begin{example}
$$FV(x\,(\pi_z\ci\pi_y\ci x))=\langle\{x\},\emptyset,\{x\},\emptyset,\ldots\rangle$$
\end{example}
\begin{example}
$$FV(\lambda
z.x\,(\pi_z\ci \pi_y\ci
x))=\langle\{x\},\{x\},\emptyset,\ldots\rangle$$
\end{example}
\begin{example}
$$FV(\lambda y z.x\,(\pi_z\ci\pi_y\ci x))=\langle\{x\},\emptyset,\emptyset,\ldots\rangle$$
\end{example}
\begin{example}
$$FV(\lambda x y
z.x\,(\pi_z\ci\pi_y\ci
x))=\langle\emptyset,\emptyset,\emptyset,\ldots\rangle$$
\end{example}
Warning! May be that $\x\in\bigcup_{i\geqslant 1}
 FV_i(\lambda \x.M)$.
  \begin{example}
$FV(\pi_x\ci x)=\la\emptyset,\{x\},\emptyset,\emptyset,\ldots\ra$
\end{example}
\begin{example}
$FV(\lambda x.\pi_x\ci x)=\la\{x\},\emptyset,\emptyset,\ldots\ra$
\end{example}
In fact, the term $\lambda x.\pi_x\ci x$ is $\alpha$-equal to
$\lambda y.\pi_y\ci x$.
\begin{lemma}\label{corollary23} $(s\ci q)\ci M$ and $s\ci q\ci M$ have
the same
 $FV$ .
\end{lemma}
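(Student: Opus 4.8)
The plan is to compute $FV$ of both terms directly from Definition~\ref{FV} and observe that they collapse to the same expression. The only structural fact used is clause~(viii), which says that the operator attached to a composite substitution factors as $O_{s\ci q}(\mathcal{A})=O_s(O_q(\mathcal{A}))$; everything else is a matter of unfolding the shorthand conventions correctly.

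First I would treat the left-hand term $(s\ci q)\ci M$. It has the form $t\ci M$ where $t$ is the substitution $s\ci q$, so clause~(iv) gives $FV((s\ci q)\ci M)=O_{s\ci q}(FV(M))$, and clause~(viii) then unfolds the composite operator to yield $O_{s\ci q}(FV(M))=O_s(O_q(FV(M)))$.

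Next I would treat the right-hand term $s\ci q\ci M$. By Convention~\ref{notation1} this is shorthand for $s\ci(q\ci M)$, whose outermost constructor is again an application of a substitution. Clause~(iv) gives $FV(s\ci(q\ci M))=O_s(FV(q\ci M))$, and a second application of clause~(iv) to the inner term $q\ci M$ gives $FV(q\ci M)=O_q(FV(M))$, so that $FV(s\ci q\ci M)=O_s(O_q(FV(M)))$.

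Both sides therefore equal $O_s(O_q(FV(M)))$, which establishes the lemma. I do not expect any genuine obstacle here: the statement is essentially the observation that clause~(viii) of Definition~\ref{FV} is exactly what reconciles the two parenthesizations, and the only point demanding care is keeping the shorthand convention $s\ci q\ci M=s\ci(q\ci M)$ straight so that the inner and outer applications of clause~(iv) are matched against the right subterms.
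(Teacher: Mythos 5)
Your proof is correct and is exactly the ``straightforward'' computation the paper has in mind (the paper gives no details beyond that remark): unfold both sides by clause~(iv) of Definition~\ref{FV}, use clause~(viii) to factor $O_{s\ci q}$, and observe both sides equal $O_s(O_q(FV(M)))$. No gaps; the handling of the shorthand $s\ci q\ci M\equiv s\ci(q\ci M)$ is the only subtlety and you got it right.
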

\begin{proof}The proof is straightforward.
\end{proof}
\begin{lemma}\label{corollary3} $\langle s\mi N\is \x\rangle\circ M$ and $(s\circ \lambda
\x.M)N$ have the same $FV$.
\end{lemma}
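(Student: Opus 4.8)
The plan is to compute $FV$ of both terms directly from Definition~\ref{FV} and to observe that the two computations produce the very same infinite sequence of sets, namely $O_s(O_{\lambda \x}(FV(M)))\cup FV(N)$.

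First I would handle the left-hand term $\la s\mi N\is \x\ra\ci M$. Applying clause~(iv) of Definition~\ref{FV} gives $FV(\la s\mi N\is \x\ra\ci M)=O_{\la s\mi N\is \x\ra}(FV(M))$, and then clause~(ix), which unfolds the operator $O_{\la s\mi N\is \x\ra}$, yields $O_s(O_{\lambda \x}(FV(M)))\cup FV(N)$. This is exactly the last line of the corollary preceding the lemma, so in the writeup I may simply cite that line instead of repeating the two steps.

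Next I would compute the right-hand term $(s\ci \lambda \x.M)N$. Clause~(ii) splits the application into $FV((s\ci \lambda \x.M)N)=FV(s\ci \lambda \x.M)\cup FV(N)$. Clause~(iv) rewrites the first summand as $FV(s\ci \lambda \x.M)=O_s(FV(\lambda \x.M))$, and clause~(iii) rewrites $FV(\lambda \x.M)=O_{\lambda \x}(FV(M))$. Substituting back, I obtain $FV((s\ci \lambda \x.M)N)=O_s(O_{\lambda \x}(FV(M)))\cup FV(N)$.

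Since both sides reduce to the same sequence $O_s(O_{\lambda \x}(FV(M)))\cup FV(N)$, the two terms have the same $FV$, which proves the lemma. There is no genuine obstacle here; the proof is a straightforward unfolding of the defining clauses. The only point requiring any attention is to apply the clauses in the correct order and to notice that clause~(ix) was deliberately arranged so that $O_{\la s\mi N\is \x\ra}$ coincides with first applying $O_{\lambda \x}$, then $O_s$, and finally unioning with $FV(N)$ --- which is precisely the structure that emerges when one evaluates the free variables of $(s\ci \lambda \x.M)N$ through clauses~(ii), (iv), and~(iii).
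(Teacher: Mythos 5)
Your proposal is correct and is exactly the ``straightforward'' computation the paper leaves implicit: unfolding clauses (iv) and (ix) of Definition~\ref{FV} on one side, and clauses (ii), (iv), (iii) on the other, so that both terms' $FV$ equal $O_s(O_{\lambda \x}(FV(M)))\cup FV(N)$. Nothing is missing, and your observation that clause~(ix) was designed to mirror the structure of $(s\ci \lambda \x.M)N$ is precisely the point of the lemma.
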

\begin{proof}The proof is straightforward.
\end{proof}
\begin{convention}Since $O_{\pi_{\x}}$ and $O_{\pi_{\y}}$ are the
same for any $\x,\y$, we will simply write $O_{\pi}$.
\end{convention}
\begin{definition}\label{mon}
 $\mathcal{A}\subseteq\mathcal{B}$ is shorthand for
``$\mathcal{A}_i\subseteq\mathcal{B}_i$ for all $i\geqslant 1$."
\end{definition}
\begin{lemma}\label{monotone}$O_{\lambda \x}$ and $O_{\pi}$ are monotone
operators with respect to $\subseteq$ (for any $\x$).
\end{lemma}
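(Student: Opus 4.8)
The plan is to unwind Definition~\ref{mon} and the defining clauses~(v) and~(vii) of Definition~\ref{FV}, and then verify the required inclusion $O(\mathcal{A})\subseteq O(\mathcal{B})$ componentwise under the hypothesis $\mathcal{A}\subseteq\mathcal{B}$ (that is, $\mathcal{A}_i\subseteq\mathcal{B}_i$ for every $i\geqslant 1$). Both operators are given by purely componentwise formulas built from $\cup$, removal of the fixed element $\x$, reindexing (shifting) of the sequence, and the constant $\emptyset$, so the whole argument reduces to checking that each of these elementary operations respects $\subseteq$.

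For $O_{\pi}$ I would argue directly from clause~(vii). The first component of $O_{\pi}(\mathcal{A})$ is $\emptyset$, trivially contained in the first component $\emptyset$ of $O_{\pi}(\mathcal{B})$; and for $n\geqslant 1$ the $(n+1)$-th component of $O_{\pi}(\mathcal{A})$ is $\mathcal{A}_n$ while that of $O_{\pi}(\mathcal{B})$ is $\mathcal{B}_n$, where the inclusion $\mathcal{A}_n\subseteq\mathcal{B}_n$ is exactly the hypothesis. Hence $O_{\pi}(\mathcal{A})\subseteq O_{\pi}(\mathcal{B})$.

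For $O_{\lambda \x}$ the only component that is not a bare shift is the first, namely $(\mathcal{A}_1\setminus\{\x\})\cup\mathcal{A}_2$. Here I would invoke two elementary monotonicity facts: first, $\mathcal{A}_1\subseteq\mathcal{B}_1$ implies $\mathcal{A}_1\setminus\{\x\}\subseteq\mathcal{B}_1\setminus\{\x\}$, since removing a fixed element is monotone in its set argument; second, $\cup$ is monotone in both arguments, so together with $\mathcal{A}_2\subseteq\mathcal{B}_2$ this gives $(\mathcal{A}_1\setminus\{\x\})\cup\mathcal{A}_2\subseteq(\mathcal{B}_1\setminus\{\x\})\cup\mathcal{B}_2$. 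For $i\geqslant 2$ the $i$-th component of $O_{\lambda \x}(\mathcal{A})$ is $\mathcal{A}_{i+1}$, and $\mathcal{A}_{i+1}\subseteq\mathcal{B}_{i+1}$ again holds by hypothesis. This yields $O_{\lambda \x}(\mathcal{A})\subseteq O_{\lambda \x}(\mathcal{B})$.

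There is no genuine obstacle here: the statement holds for the same reason that any map assembled componentwise from monotone set-theoretic operations is monotone. The one point worth isolating — the closest thing to a nontrivial step — is the monotonicity of $\setminus\{\x\}$ in its first argument, which governs the behaviour of the first component of $O_{\lambda \x}$; everything else is a reindexing that transports the hypothesis verbatim.
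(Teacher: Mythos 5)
Your proposal is correct: the paper itself dismisses this lemma with ``The proof is straightforward,'' and your componentwise verification is precisely that straightforward argument spelled out, unwinding Definition~\ref{mon} and clauses~(v) and~(vii) of Definition~\ref{FV} and using monotonicity of $\cup$, of $\setminus\{\x\}$ in its set argument, and of reindexing. There is nothing to add or correct.
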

\begin{proof}The proof is straightforward.
\end{proof}
\begin{corollary}$O_s$ is monotone with respect to $\subseteq$ for any $s$.
\end{corollary}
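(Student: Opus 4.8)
The plan is to prove this by structural induction on the substitution $s$, following the inductive definition of substitutions in Definition~\ref{pervoe}, and to feed Lemma~\ref{monotone} into the base and building-block cases. Recall that $O_s$ is defined by clauses (vi)--(ix) of Definition~\ref{FV}, and that $\subseteq$ on $Sets^\omega$ is componentwise (Definition~\ref{mon}); \emph{monotone} here means that $\mathcal{A}\subseteq\mathcal{B}$ implies $O_s(\mathcal{A})\subseteq O_s(\mathcal{B})$.

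First I would record two routine closure facts about monotone operators on $Sets^\omega$ ordered by $\subseteq$. Namely: (a) the composition of two monotone operators is again monotone; and (b) for any fixed sequence $\mathcal{C}$, the operator $\mathcal{A}\mapsto\mathcal{A}\cup\mathcal{C}$ is monotone, since set union is monotone in each of its arguments and hence in each component. Both are immediate from the definition of $\subseteq$.

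Then the induction runs through the four shapes of $s$. For the base cases: if $s\equiv id$, then $O_{id}$ is the identity operator by Definition~\ref{FV}(vi), so it is trivially monotone; if $s\equiv\pi_{\x}$, then $O_{\pi_{\x}}=O_{\pi}$ is monotone directly by Lemma~\ref{monotone}. For the composite case $s\equiv q\ci r$, Definition~\ref{FV}(viii) gives $O_{q\ci r}=O_q\circ O_r$; by the induction hypothesis $O_q$ and $O_r$ are monotone, so the composite is monotone by fact (a). For the cons case $s\equiv\langle q\mi N\is\x\rangle$, Definition~\ref{FV}(ix) gives $O_s(\mathcal{A})=O_q(O_{\lambda\x}(\mathcal{A}))\cup FV(N)$; here $O_{\lambda\x}$ is monotone by Lemma~\ref{monotone}, $O_q$ is monotone by the induction hypothesis, their composite is monotone by fact (a), and adjoining the fixed sequence $FV(N)$ preserves monotonicity by fact (b).

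I do not expect a genuine obstacle: the argument is a clean structural induction whose cases are all dispatched by Lemma~\ref{monotone} together with the two closure facts. The only point requiring the slightest care is the cons case, where the constant term $FV(N)$ appears on the right of a union; this is precisely what fact (b) is for, and it is what makes the otherwise purely compositional argument go through.
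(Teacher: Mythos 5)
Your proof is correct and is exactly the argument the paper intends: the paper states this as an unproved corollary of Lemma~\ref{monotone}, and the implicit justification is precisely your structural induction on $s$ (identity and $O_{\pi}$ as base cases, closure under composition for $s\ci q$, and closure under composition plus union with the fixed sequence $FV(N)$ for the cons case). Nothing is missing; the case analysis matches Definition~\ref{FV} clause by clause.
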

\begin{definition}We define  $\lambda\Gamma.M$  as follows:
\\
$
\begin{array}{ll}
& \lambda\,nil.M \equiv M\\
& \lambda\Sigma,\x.M \equiv\lambda\Sigma.(\lambda \x.M),
\end{array}
$\\[5pt] where $nil$ is the empty context. For example,
$$\lambda x,y,z.M\equiv\lambda xyz.M$$
\end{definition}
\begin{definition}(Free variables of judgements). By definition, put\\
$
\begin{array}{ll}
& FV(\Gamma\vdash M)=FV(\lambda\Gamma.M)\\
\end{array}
$
\end{definition}

\newpage

\section{ $\lambda\pi$-calculus }
\begin{definition}\label{uparrow}We define  $\Uparrow_{\Delta}\!(s)$ as follows:
\\
$
\begin{array}{ll}
& \Uparrow_{nil}(s) \equiv s\\
& \Uparrow_{\Sigma,\x}(s)
\equiv\langle\pi_{\x}\,\ci\Uparrow_{\Sigma}\!(s)\mi \x\is
\x\rangle,
\end{array}
$\\[5pt] where $nil$ is the empty context. For example,
$$\Uparrow_{x,y,z}\!(s)\equiv\langle\pi_{z}\ci\langle\pi_{y}\ci\langle\pi_{x}\ci
s\mi x\is x\rangle\mi y\is y\rangle\mi z\is z\rangle$$
\end{definition}

Note that
$\Uparrow_{\Sigma,\x}\!(s)\equiv\,\Uparrow_{\x}\!\!(\Uparrow_{\Sigma}\!(s))$.
\begin{convention}$ $\\
$
\begin{array}{lll}
\Uparrow_{\Delta}\!\la s\mi N\is \x\ra & \text{is shorthand for} &
\Uparrow_{\Delta}\!(\la s\mi N\is \x\ra)
\end{array}
$
\end{convention}

Now we introduce several one-step reductions: two reductions with
the same name $\arrow$ defined on the sets of terms and
substitutions, and the reduction $\ri$ defined on the set of
judgements of the form $\Gamma\vdash M$.
\begin{definition}\label{lambdapi}(The calculus $\lambda\pi$).\\[10pt]
$
\begin{array}{ccc}
& \ruleone{ M_1\arrow M_2}{ \lambda
\x.M_1\arrow\lambda \x.M_2}\\[20pt]
& \ruleone{ M_1\arrow M_2}{ M_1N\arrow  M_2N} & \ruleone{
N_1\arrow N_2}{
MN_1\arrow  MN_2}\\[20pt]
& \ruleone{  s_1\arrow s_2}{s_1\ci M\arrow s_2\ci M} & \ruleone{
M_1\arrow
M_2}{s\ci M_1\arrow s\ci M_2}\\[20pt]
& \ruleone{ s_1\arrow s_2}{ \langle s_1\mi N\is \x\rangle\arrow
\langle s_2\mi N\is \x\rangle} & \ruleone{ N_1\arrow N_2}{ \langle
s\mi N_1\is \x\rangle\arrow \langle
s\mi N_2\is \x\rangle}\\[20pt]
& \ruleone{ s_1\arrow s_2}{ s_1\ci q\arrow s_2\ci q} & \ruleone{
q_1\arrow q_2}{ s\ci q_1\arrow s\ci
q_2}\\[20pt]
& \ruleone{M_1\arrow M_2}{\Gamma\vdash M_1\ri
\Gamma\vdash M_2}\\[20pt]
\end{array}
$

$
\begin{array}{lll}
(Beta) & (\lambda \x.M)N\rightarrow \langle id\mi N\is \x\rangle\ci M &\\
(Abs) & s\ci \lambda \x.M\rightarrow\lambda \x.\langle \pi_{\x}\ci
s\mi \x\is
\x\rangle\ci M &\\
(App) & s\ci MN\rightarrow(s\ci  M)(s\ci  N) &\\
(ConsVar) & \langle s\mi N\is \x\rangle\ci \x\rightarrow N &\\
(New) & \langle s\mi N\is \x\rangle\ci \y\rightarrow  s\ci \y & (\x\neq \y)\\
(IdVar) &  id\ci \x\rightarrow \x &\\
(Clos) & s\ci q\ci M\rightarrow (s\ci q)\ci M &\\
(Ass) & s\ci q\ci  r\rightarrow (s\ci q)\ci r &\\
(IdR) & s\ci id\rightarrow s &\\
(IdShift) & id\ci \pi_{\x}\rightarrow\pi_{\x} &\\
(ConsShift) & \langle s\mi N\is \x\rangle\ci \pi_{\x}\rightarrow s &\\
(Map) & s\ci \langle q\mi N\is \x\rangle\rightarrow\langle
s\ci q \mi s\ci N\is \x\rangle &\\
(\pi_1) & \pi_{\x}\ci \y\rightarrow \y & (\x\neq \y)\\
(\pi_2) & (s\ci\pi_{\x})\ci \y\rightarrow s\ci \y & (\x \neq
\y)\\
(\alpha_1) & \lambda \x.M\rightarrow\lambda
\y.\langle\pi_{\y}\mi \y\is \x\rangle\ci M & (*)\\
(\alpha_2) & \Gamma,\x,\Delta\vdash M\ri\Gamma,\y,\Delta\vdash\,
\Uparrow_{\Delta}\!\langle\pi_{\y}\mi \y\is \x\rangle\ci M & (**)
\\[5pt]
\end{array}$\\
where the side conditions are as follows:\\[5pt]
$
\begin{array}{ll}
(*) & \x \in \bigcup_{i\geqslant 1} FV_i(\lambda \x.M);\quad
\y\notin \bigcup_{i\geqslant 1} FV_i(\lambda \x.M
 )\\
 (**) & \x\in \bigcup_{i\geqslant 1} FV_i(\x,\Delta\vdash M);\quad \y\notin
 \bigcup_{i\geqslant 1}FV_i(\x,\Delta\vdash M
 )\\
\end{array}\\[5pt]
$
Recall that $s\ci MN$ is shorthand for $s\ci(MN)$.\\
\end{definition}
Note that $(Abs)$ can be written as\\
$
\begin{array}{ll}
(Abs) & s\ci \lambda \x.M\rightarrow\lambda
\x.\!\Uparrow_{\x}\!(s) \ci M
\end{array}
$
\begin{definition}By $\twoh$ denote the reflexive transitive
closure of $\arrow$.\\
By $\ri\ri$ denote the reflexive transitive closure of $\ri$.
\end{definition}
\begin{lemma}$ $\\
If $\x\neq \y_1, \ldots, \x\neq \y_k$,
then
 $\la s\mi N\is \x\mi N_1\is \y_1\mi\ldots\mi N_k\is \y_k\ra\ci
\x\twoh N$.
\end{lemma}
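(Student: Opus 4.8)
The plan is to prove the claim by induction on $k$, peeling off the pairs one at a time with $(New)$ and finishing with a single $(ConsVar)$ step. Note first that both rules involved act at the root of the expression $\la\ldots\ra\ci\x$, so no rule from the compatible closure is needed; the reduction sequence is entirely a sequence of head reductions.

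For the base case $k=0$ the substitution is simply $\la s\mi N\is\x\ra$, and rule $(ConsVar)$ gives $\la s\mi N\is\x\ra\ci\x\arrow N$ in one step, so the desired $\twoh N$ holds (indeed in a single step).

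For the inductive step I would first unfold the shorthand of Convention~\ref{notation1}: the substitution $\la s\mi N\is\x\mi N_1\is\y_1\mi\ldots\mi N_k\is\y_k\ra$ is a left-nested pair whose outermost layer is $\la\cdots\mi N_k\is\y_k\ra$. Matching rule $(New)$ against this outermost layer, with the rule's bound variable instantiated to $\y_k$ and its applied variable to $\x$, its side condition becomes exactly $\y_k\neq\x$, which holds by hypothesis. Hence the rule fires at the root:
\[
\la s\mi N\is\x\mi N_1\is\y_1\mi\ldots\mi N_k\is\y_k\ra\ci\x
\arrow
\la s\mi N\is\x\mi N_1\is\y_1\mi\ldots\mi N_{k-1}\is\y_{k-1}\ra\ci\x.
\]
Since the hypotheses $\x\neq\y_1,\ldots,\x\neq\y_{k-1}$ still hold, the induction hypothesis applies to the right-hand side and yields $\twoh N$. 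Composing the single $(New)$ step with this reduction sequence gives the claim.

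I expect no genuine obstacle here. The only points demanding care are parsing the left-nested shorthand correctly, so that $(New)$ is seen to match the \emph{outermost} layer rather than some inner one, and observing that at each peeling step the side condition of $(New)$ reduces to one of the given hypotheses $\x\neq\y_j$. Everything else is a routine $k$-fold iteration of $(New)$ followed by one application of $(ConsVar)$, and the result is a reduction of length $k+1$.
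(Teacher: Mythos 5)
Your proof is correct and follows exactly the paper's own argument, which says simply ``We use $New$ (repeatedly), then we use $ConsVar$''; you have merely made the iteration formal as an induction on $k$. No differences in approach, and the parsing of the left-nested shorthand and the verification of the side condition $\x\neq\y_k$ are handled correctly.
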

\begin{proof}We use $New$ (repeatedly), then we use $ConsVar$.
\end{proof}
\begin{example}$$\begin{array}{ll}
\langle id\mi N\is x\mi L\is y\rangle\ci x &
\\
\rightarrow\langle id\mi N\is x\rangle\ci x &
New\\
\rightarrow N & ConsVar
\end{array}$$
\end{example}
\begin{example}$$\begin{array}{ll}
\langle id\mi N\is x\mi L\is y\rangle\ci y &
\\
\rightarrow L & ConsVar
\end{array}$$
\end{example}
\begin{example}$$\begin{array}{ll}
\langle id\mi N\is x\mi L\is y\rangle\ci z &
\\
\rightarrow\langle id\mi N\is x\rangle\ci z &
New\\
\rightarrow id\ci z & New\\
\rightarrow z & IdVar
\end{array}$$
\end{example}
\begin{example}$$\begin{array}{ll}
\langle id\mi N\is x\mi L\is x\rangle\ci x &
\\
\rightarrow L & ConsVar
\end{array}$$
\end{example}
\begin{example}$$\begin{array}{ll}
\langle id\mi N\is x\mi L\is x\rangle\ci \pi_x\ci x &
\\
\arrow(\langle id\mi N\is x\mi L\is x\rangle\ci \pi_x)\ci x &
Clos\\
\arrow \langle id\mi N\is x\ra\ci x &
ConsShift\\
\rightarrow N & ConsVar
\end{array}$$
\end{example}
\begin{example}$$\begin{array}{ll}
\langle \pi_x\mi N\is y\mi L\is y\rangle\ci z &
\\
\rightarrow\langle \pi_x\mi N\is y\rangle\ci z &
New\\
\rightarrow\pi_x\ci z & New\\
\rightarrow z & \pi_1
\end{array}$$
\end{example}
\begin{example}$$\begin{array}{ll}
\langle\pi_x\mi N\is y\mi L\is y\rangle\ci x &
\\
\rightarrow\langle\pi_x\mi N\is y\rangle\ci
x & New\\
\rightarrow\pi_x\ci  x & New \end{array}$$
 where $\pi_x\ci x$ is a
normal form.
\end{example}
\begin{example}
$FV(\lambda x.\pi_x\ci x)=\la\{x\},\emptyset,\emptyset,\ldots\ra$
\end{example}
\begin{example}$$\begin{array}{ll}
 \lambda x.\pi_x\ci x & \\
 \rightarrow\lambda
y.\langle\pi_y\mi y\is x\rangle\ci \pi_x\ci x &
\alpha_1\\
\rightarrow\lambda y.(\langle\pi_y\mi y\is x\rangle\ci \pi_x)\ci x
& Clos\\
 \rightarrow\lambda y.\pi_y\ci
x & ConsShift\\
\rightarrow\lambda y.x & \pi_1
\end{array}$$
\end{example}
\begin{example}$$\begin{array}{ll}
(\lambda x y.x)\,y & \\
\rightarrow\langle id\mi y\is x\rangle\ci \lambda y.x &
Beta\\
\rightarrow\lambda y.\langle\pi_y\ci \langle id\mi y\is
x\rangle\mi y\is y\rangle\ci x & Abs\\
 \rightarrow\lambda
y.(\pi_y\ci \langle id\mi y\is x\rangle)\ci x &
New\\
\rightarrow\lambda y.\langle\pi_y\ci id\mi \pi_y\ci y\is
x\rangle\ci x & Map\\
 \rightarrow \lambda y.\pi_y\ci
y & ConsVar\\
\rightarrow\lambda z.\langle\pi_z\mi z\is y\rangle\ci \pi_y\ci y &
\alpha_1\\
\rightarrow\lambda z.(\langle\pi_z\mi z\is y\rangle\ci \pi_y)\ci y
& Clos\\
 \rightarrow\lambda z.\pi_z\ci
y & ConsShift\\
\rightarrow\lambda z.y & \pi_1
\end{array}
$$
\end{example}
\begin{example}$$\begin{array}{ll}
(\lambda x y.x)\,y & \\
\rightarrow \langle id\mi y\is x\rangle\ci\lambda y.x &
Beta\\
\rightarrow\lambda y.\langle\pi_y\ci\langle id\mi y\is x\rangle\mi
y\is y\rangle\ci x & Abs\\
 \rightarrow\lambda
y.\langle\pi_y\ci id\mi \pi_y\ci y\is x\mi y\is y\rangle\ci x &
Map\\
\rightarrow\lambda y.\langle\pi_y\ci id\mi
\pi_y\ci y\is x\rangle\ci x & New\\
 \rightarrow
\lambda y.\pi_y\ci y & ConsVar\\
\rightarrow\lambda z.\langle\pi_z\mi z\is y\rangle\ci \pi_y\ci y &
\alpha_1\\
\rightarrow\lambda z.(\langle\pi_z\mi z\is y\rangle\ci \pi_y)\ci y
 & Clos\\
 \rightarrow\lambda z.\pi_z\ci
y & ConsShift\\
\rightarrow\lambda z.y & \pi_1
\end{array}$$
\end{example}
\begin{example}
$FV(x\vdash\pi_x\ci x)=FV(\lambda x.\pi_x\ci
x)=\la\{x\},\emptyset,\emptyset,\ldots\ra$
\end{example}
\begin{example}
$$ x,x\vdash \pi_x\ci x \ri_{\alpha_2} x,y\vdash\langle\pi_y\mi
y\is x\rangle\ci \pi_x\ci x$$
 Further,
 $$\begin{array}{ll}
 \langle \pi_y\mi y\is
x\rangle\ci \pi_x\ci x & \\
\rightarrow (\langle \pi_y\mi y\is x\rangle\ci \pi_x)\ci x &
Clos\\
\rightarrow \pi_y\ci x & ConsShift\\
\rightarrow x & \pi_1 \end{array}$$
 We see that
$$\begin{array}{ll}
 x,x\vdash \pi_x\ci
x & \\
\ri x,y\vdash\langle\pi_y\mi y\is x\rangle\ci \pi_x\ci x &
\alpha_2\\
\ri x,y\vdash (\langle \pi_y\mi y\is x\rangle\ci \pi_x)\ci x &
Clos\\
\ri x,y\vdash \pi_y\ci x & ConsShift\\
\ri x,y\vdash x & \pi_1
\end{array}$$
\end{example}
\begin{example}
$FV(\lambda x z.\pi_z\ci\pi_x\ci
x)=\la\{x\},\emptyset,\emptyset,\ldots\ra$
\end{example}
\begin{example}$$\begin{array}{ll}
\lambda x x z.\pi_z\ci\pi_x\ci x & \\
\arrow \lambda x y.\la\pi_y\mi y\is
x\ra\ci\lambda z.\pi_z\ci\pi_x\ci x & \alpha_1\\
\arrow
 \lambda x y
z.\Uparrow_z\!\la\pi_y\mi y\is x\ra\ci\pi_z\ci\pi_x\ci x & Abs\\
\equiv\lambda x y z.\la\pi_z\ci\la\pi_y\mi y\is x\ra\mi z\is
z\ra\ci\pi_z\ci\pi_x\ci x & Definition~\ref{uparrow}\\
\arrow \lambda x y z.(\la\pi_z\ci\la\pi_y\mi y\is x\ra\mi z\is
z\ra\ci\pi_z)\ci\pi_x\ci x & Clos\\
\arrow
\lambda x y z.(\pi_z\ci\la\pi_y\mi y\is x\ra)\ci\pi_x\ci x & ConsShift\\
\arrow
\lambda x y z.\la\pi_z\ci\pi_y\mi\pi_z\ci y\is x\ra\ci\pi_x\ci x & Map\\
\arrow
\lambda x y z.(\la\pi_z\ci\pi_y\mi\pi_z\ci y\is x\ra\ci\pi_x)\ci x & Clos\\
\arrow
\lambda x y z.(\pi_z\ci\pi_y)\ci x & ConsShift\\
\arrow
\lambda x y z.\pi_z\ci x & \pi_2\\
\arrow \lambda x y z. x & \pi_1
\end{array}$$
\end{example}
\begin{example}
$FV(x,z\vdash\pi_z\ci\pi_x\ci x)=FV(\lambda x z.\pi_z\ci\pi_x\ci
x)=\la\{x\},\emptyset,\emptyset,\ldots\ra$
\end{example}
\begin{example}$$\begin{array}{ll}
x,x,z\vdash\pi_z\ci\pi_x\ci x & \\
\ri x,y,z\vdash\,\Uparrow_z\!\la\pi_y\mi y\is
x\ra\ci\pi_z\ci\pi_x\ci x & \alpha_2\\
\equiv x,y,z\vdash\la\pi_z\ci\la\pi_y\mi y\is x\ra\mi z\is
z\ra\ci\pi_z\ci\pi_x\ci x & Definition~\ref{uparrow}\\
\ri x,y,z\vdash(\la\pi_z\ci\la\pi_y\mi y\is x\ra\mi z\is
z\ra\ci\pi_z)\ci\pi_x\ci x & Clos\\
\ri
x,y,z\vdash(\pi_z\ci\la\pi_y\mi y\is x\ra)\ci\pi_x\ci x & ConsShift\\
\ri
x,y,z\vdash\la\pi_z\ci\pi_y\mi\pi_z\ci y\is x\ra\ci\pi_x\ci x & Map\\
\ri
x,y,z\vdash(\la\pi_z\ci\pi_y\mi\pi_z\ci y\is x\ra\ci\pi_x)\ci x & Clos\\
\ri
x,y,z\vdash(\pi_z\ci\pi_y)\ci x & ConsShift\\
\ri
x,y,z\vdash\pi_z\ci x & \pi_2\\
\ri x,y,z\vdash x & \pi_1 \end{array}$$
\end{example}
\begin{example}(Some redexes are underlined).\\
$$\begin{array}{ll} (\lambda x y z.xz(yz))(\lambda x
y.x) & \\
\arrow \la id\mi \lambda x y.x\is x\ra\ci\lambda
y z.xz(yz) & Beta\\
\arrow \lambda y.\Uparrow_{y}\!\la id\mi \lambda x
y.x\is x\ra\ci\lambda z. xz(yz) & Abs\\
 \arrow \lambda y z.\Uparrow_{y,z}\!\!\la
id\mi \lambda x y.x\is x\ra\ci xz(yz) & Abs\\
\arrow \lambda y z.(\underline{\Uparrow_{y,z}\!\!\la id\mi \lambda
x y.x\is x\ra\ci xz})\,(\Uparrow_{y,z}\!\!\la id\mi \lambda x
y.x\is x\ra\ci yz) &  App\\
\twoh \lambda y z.(\lambda x
y.x)\,z\,(\underline{\Uparrow_{y,z}\!\!\la id\mi \lambda x
y.x\is x\ra\ci yz}) &  App, Example~\ref{4.25},Example~\ref{4.26}\\
\twoh \lambda y z.(\lambda x
y.x)\,z\,(yz) & App,Example~\ref{4.27},Example~\ref{4.26}\\
\arrow \lambda y z.(\la id\mi z\is
x\ra\ci\lambda y.x)(yz) & Beta\\
\arrow \lambda y z.(\lambda
y.\underline{\Uparrow_y\!\la id\mi z\is x\ra\ci x})(yz) & Abs\\
\twoh \lambda y
z.(\lambda y. z)(yz) & Example~\ref{4.28}\\
\arrow \lambda y z.\la id\mi
yz\is y\ra\ci z & Beta\\
\arrow \lambda y z.id\ci
z & New\\
\arrow \lambda y z.z & IdVar
\end{array}$$
\end{example}

\begin{example}\label{4.25}
$$\begin{array}{ll}
\Uparrow_{y,z}\!\la id\mi \lambda xy.x\is x\ra\ci x & \\
\equiv\la\pi_z\ci\Uparrow_y\!\la id\mi \lambda xy.x\is x\ra\mi
y\is y\ra\ci x &
Definition~\ref{uparrow}\\
\arrow(\pi_z\ci\Uparrow_y\! \la id\mi \lambda xy.x\is x\ra)\ci x &
New\\
\equiv(\pi_z\ci\la\pi_y\ci \la id\mi \lambda xy.x\is x\ra\mi y\is
y\ra)\ci x &
Definition~\ref{uparrow}\\
\arrow\la\pi_z\ci\pi_y\ci\la id\mi \lambda xy.x\is x\ra\mi
\pi_z\ci y\is y\ra\ci x
& Map\\
\arrow(\pi_z\ci\pi_y\ci\la id\mi \lambda xy.x\is x\ra)\ci x &
New\\
\arrow(\pi_z\ci\la\pi_y\ci id\mi \pi_y\ci\lambda xy.x\is x\ra)\ci
x & Map\\
\arrow\la\pi_z\ci\pi_y\ci id\mi\pi_z\ci\pi_y\ci\lambda xy.x\is
x\ra\ci x & Map\\
\arrow\pi_z\ci\pi_y\ci\lambda xy.x & ConsVar\\
\twoh \lambda x y.x & because\,\, \lambda xy.x\,\, is\,\, closed
\end{array}$$
\end{example}
\begin{example}\label{4.26}
$$\begin{array}{ll}
\Uparrow_{y,z}\!\la id\mi \lambda xy.x\is x\ra\ci z & \\
\equiv\la\pi_z\ci \Uparrow_y\!\la id\mi \lambda xy.x\is x\ra\mi z\is z\ra\ci z & Definition~\ref{uparrow}\\
\arrow z & ConsVar
\end{array}$$
\end{example}
\begin{example}\label{4.27}
$$\begin{array}{ll}
\Uparrow_{y,z}\!\la id\mi\lambda xy.x\is x\ra\ci y & \\
\equiv\la\pi_z\ci\Uparrow_y\!\la id\mi \lambda xy.x\is x\ra\mi
z\is
z\ra\ci y & Definition~\ref{uparrow}\\
\arrow(\pi_z\ci\Uparrow_y\!\la id\mi \lambda xy.x\is x\ra)\ci y &
New\\
\equiv(\pi_z\ci\la\pi_y\ci\la id\mi\lambda xy.x\is x\ra\mi y\is
y\ra)\ci y & Definition~\ref{uparrow}\\
\arrow\la\pi_z\ci\pi_y\ci\la id\mi \lambda xy.x\is x\ra\mi
\pi_z\ci
y\is y\ra\ci y & Map\\
\arrow \pi_z\ci y & ConsVar\\
\arrow y & \pi_1
\end{array}$$
\end{example}
\begin{example}\label{4.28}
$$\begin{array}{ll}
\Uparrow_y\!\la id\mi z\is x\ra\ci x & \\
\equiv\la\pi_y\ci\la id\mi z\is x\ra\mi y\is y\ra\ci x &
Definition~\ref{uparrow}\\
\arrow(\pi_y\ci\la id\mi z\is x\ra)\ci x & New\\
\arrow\la\pi_y\ci id\mi\pi_y\ci z\is x\ra\ci x & Map\\
\arrow\pi_y\ci z & ConsVar\\
\arrow z & \pi_1
\end{array}$$
\end{example}

\newpage

\section{Subject reduction}
\begin{theorem}[Subject reduction, part one]\label{SubRed1}$ $\\
If $\Gamma\vdash M_1$ is derivable and $M_1\arrow M_2$, then
$\Gamma\vdash M_2$ is derivable.\\
If $\Gamma\vdash s_1\tri\Delta$ is derivable and $s_1\arrow s_2$,
then $\Gamma\vdash s_2\tri\Delta$ is derivable.
 \end{theorem}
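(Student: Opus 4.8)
The plan is to prove both assertions simultaneously by induction on the derivation of the one-step reduction, that is, on the inference rules of the compatible closure together with the base rewrite rules of Definition~\ref{lambdapi}. A simultaneous induction is forced by the congruence rules, which tie a reduction on terms to a reduction on substitutions and conversely (for instance, $s_1\arrow s_2$ yields $s_1\ci M\arrow s_2\ci M$, and $M_1\arrow M_2$ yields $s\ci M_1\arrow s\ci M_2$). The two standing tools are the Generation Lemma, used to peel apart the unique derivation of the left-hand side into its immediate premises, and the lemma that the codomain $\Delta$ of a well-formed substitution is uniquely determined by $\Gamma$ and the substitution; the latter is what guarantees that reduction on substitutions preserves the \emph{same} target context $\Delta$ demanded by the statement.

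For the congruence (compatible-closure) cases the argument is uniform. Take, say, $s\ci M_1\arrow s\ci M_2$ arising from $M_1\arrow M_2$. By the Generation Lemma a derivation of $\Gamma\vdash s\ci M_1$ factors through $\Gamma\vdash s\tri\Delta$ and $\Delta\vdash M_1$ for a uniquely determined $\Delta$; the induction hypothesis gives $\Delta\vdash M_2$, and rule~$(v)$ rebuilds $\Gamma\vdash s\ci M_2$. Every other congruence case ($\lambda\x.{-}$, ${-}N$, $M{-}$, $s_1\ci q$, $\la s_1\mi N\is\x\ra$, $\la s\mi N_1\is\x\ra$, and so on) is identical in shape: apply Generation, feed the reduced immediate subterm or subsubstitution to the induction hypothesis, and reassemble with the corresponding rule, checking that the intermediate context returned by Generation is exactly the one on which the reduced component must be typed.

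For the base rewrite rules the pattern is to apply the Generation Lemma repeatedly so as to read off, from well-formedness of the left-hand side, its subcomponents and the intermediate contexts they act on, and then to build a derivation of the right-hand side from the bottom up on those same contexts; I would check the rules one at a time. The purely structural laws ($Clos$, $Ass$, $App$, $Beta$, $IdR$, $IdShift$, $Map$) merely reshuffle applications of rules~$(iii)$, $(v)$, $(viii)$, $(ix)$, using $(vi)$ and $(vii)$ to pin down $id$ and $\pi_\x$. $ConsVar$ and $IdVar$ simply return a subderivation already produced by Generation. The laws carrying the side condition $\x\neq\y$, namely $New$, $\pi_1$ and $\pi_2$, are precisely where that condition is used: the Generation clause for rule~$(ii)$ (equivalently rule~$(ii)$ itself) converts between $\Sigma,\x\vdash\y$ and $\Sigma\vdash\y$ exactly because $\x\neq\y$. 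In addition, $ConsShift$ relies on the cancellation $\Sigma,\x=\Xi,\x\Rightarrow\Sigma=\Xi$ for context lists to identify the recovered codomain with the required one.

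The most laborious single case, and the one I expect to be the main obstacle, is $(Abs)$, $s\ci\lambda\x.M\arrow\lambda\x.\la\pi_\x\ci s\mi\x\is\x\ra\ci M$. Starting from $\Gamma\vdash s\ci\lambda\x.M$, Generation yields $\Gamma\vdash s\tri\Delta$ and $\Delta,\x\vdash M$, and the work is in erecting the right-hand side, which reduces to proving $\Gamma,\x\vdash\la\pi_\x\ci s\mi\x\is\x\ra\tri\Delta,\x$. This uses $(vii)$ for $\Gamma,\x\vdash\pi_\x\tri\Gamma$, then $(ix)$ to compose into $\Gamma,\x\vdash\pi_\x\ci s\tri\Delta$, rule~$(i)$ for $\Gamma,\x\vdash\x$, and finally $(viii)$; passing $\Delta,\x\vdash M$ through $(v)$ and then $(iv)$ closes the case. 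The one conceptual point worth flagging is $(\alpha_1)$: although its side condition $(*)$ speaks about free variables, well-formedness of $\lambda\y.\la\pi_\y\mi\y\is\x\ra\ci M$ in $\Gamma$ follows by the same bottom-up construction and does \emph{not} invoke $(*)$, so subject reduction for $(\alpha_1)$ is unconditional. Finally, $(\alpha_2)$ does not occur in this theorem, since it acts on judgements through $\ri$ rather than on terms or substitutions through $\arrow$, and thus belongs to the second part.
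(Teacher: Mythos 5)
Your proposal is correct and follows essentially the same route as the paper: induction over the one-step reduction (compatible closure plus base rules), using the Generation Lemma to decompose the derivation of the left-hand side and the inference rules to rebuild the right-hand side, with $Abs$ as the main laborious case. You also make the same two observations the paper singles out, namely that the side condition $(*)$ is never used in the $\alpha_1$ case and that $\alpha_2$ belongs to part two of subject reduction.
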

 \begin{proof}The proof is straightforward, but tedious.\\[5pt]
 Case $Beta. \quad (\lambda \x.M)N\rightarrow \langle id\mi
N\is \x\rangle\ci M
$\\[5pt]
$$\ruletwo{\ruleone{\ruledot{\Gamma,\x\vdash
M}}{\Gamma\vdash\lambda
\x.M}}{\ruledot{\Gamma\vdash N}}{\Gamma\vdash (\lambda \x.M)N}$$\\[5pt]
$$\ruletwo{\ruletwo{\Gamma\vdash
id\tri\Gamma}{\ruledot{\Gamma\vdash N}}{\Gamma\vdash\la id\mi N\is
\x\ra\tri\Gamma,\x}}{\ruledot{\Gamma,\x\vdash
M}}{\Gamma\vdash\la id\mi N\is \x\ra\ci M}$$\\[5pt]
Case $Abs. \quad s\ci \lambda \x.M\rightarrow\lambda \x.\langle
\pi_{\x}\ci s\mi \x\is \x\rangle\ci M
$\\[5pt]
$$\ruletwo{\ruledot{\Gamma\vdash
s\tri\Delta}}{\ruleone{\ruledot{\Delta,\x\vdash
M}}{\Delta\vdash\lambda \x.M}}{\Gamma\vdash s\ci\lambda
\x.M}$$\\[5pt]
$$\ruleone{\ruletwo{\ruletwo{\ruletwo{\Gamma,\x\vdash\pi_{\x}\tri\Gamma}{\ruledot{\Gamma\vdash
s\tri\Delta}}{\Gamma,\x\vdash\pi_{\x}\ci
s\tri\Delta}}{\Gamma,\x\vdash \x}{\Gamma,\x\vdash\la\pi_{\x}\ci
s\mi \x\is \x\ra\tri\Delta,\x}}{\ruledot{\Delta,\x\vdash
M}}{\Gamma,\x\vdash\la\pi_{\x}\ci s\mi \x\is \x\ra\ci
M}}{\Gamma\vdash\lambda \x.\la\pi_{\x}\ci s\mi \x\is
\x\ra\ci M}$$\\[5pt]
Case $\alpha_1$. $ \lambda \x.M\rightarrow\lambda
\y.\langle\pi_{\y}\mi \y\is \x\rangle\ci M $ $\quad (*)$ \\[5pt]
$$\ruleone{\ruledot{\Gamma,\x\vdash M}}{\Gamma\vdash\lambda \x.M}$$\\[5pt]
$$\ruleone{\ruletwo{\ruletwo{\Gamma,\y\vdash\pi_{\y}\tri\Gamma}{\Gamma,\y\vdash
\y}{\Gamma,\y\vdash\la\pi_{\y}\mi \y\is
\x\ra\tri\Gamma,\x}}{\ruledot{\Gamma,\x\vdash
M}}{\Gamma,\y\vdash\la\pi_{\y}\mi \y\is \x\ra\ci
M}}{\Gamma\vdash\lambda \y.\la\pi_{\y}\mi \y\is \x\ra\ci
M}$$\\[5pt]
 And so on. Note that we do not use $(*)$ in the proof of the case $\alpha_1$.
 \end{proof}
 \begin{lemma}\label{lemma1.2}If $\Gamma\vdash s\ci\lambda \x.M$ is
 derivable, then $\Gamma\vdash\lambda
\x.\!\Uparrow_{\x}\!(s) \ci M$ is
 derivable.
 \end{lemma}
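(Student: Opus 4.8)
The plan is to recognize that this lemma is essentially the $(Abs)$ case of Theorem~\ref{SubRed1} restated using the abbreviation $\Uparrow$. The first step is to unfold the definition: by Definition~\ref{uparrow}, $\Uparrow_{\x}\!(s)\equiv\la\pi_{\x}\ci s\mi\x\is\x\ra$, so the desired conclusion $\Gamma\vdash\lambda\x.\!\Uparrow_{\x}\!(s)\ci M$ is literally $\Gamma\vdash\lambda\x.\la\pi_{\x}\ci s\mi\x\is\x\ra\ci M$, which is exactly the right-hand side of the $(Abs)$ rewrite applied to $s\ci\lambda\x.M$. Since $\Gamma\vdash s\ci\lambda\x.M$ is assumed derivable, the statement therefore follows at once from Theorem~\ref{SubRed1}; I would nonetheless spell out the direct derivation, as it is short and self-contained.

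To do so I would first analyse the hypothesis with the Generation lemma. As $\Gamma\vdash s\ci\lambda\x.M$ is derivable, its derivation ends in rule $(v)$, so there is a context $\Delta$ together with derivations of $\Gamma\vdash s\tri\Delta$ and $\Delta\vdash\lambda\x.M$. Applying the Generation lemma once more to $\Delta\vdash\lambda\x.M$, that derivation ends in rule $(iv)$, yielding a derivation of $\Delta,\x\vdash M$. These two sub-derivations, of $\Gamma\vdash s\tri\Delta$ and of $\Delta,\x\vdash M$, are the only ingredients needed.

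Then I would reconstruct the target derivation from the bottom up, exactly as in the $(Abs)$ case of Theorem~\ref{SubRed1}: rule $(vii)$ gives $\Gamma,\x\vdash\pi_{\x}\tri\Gamma$, which composed with $\Gamma\vdash s\tri\Delta$ by rule $(ix)$ gives $\Gamma,\x\vdash\pi_{\x}\ci s\tri\Delta$; combining this with $\Gamma,\x\vdash\x$ (rule $(i)$) via rule $(viii)$ produces $\Gamma,\x\vdash\la\pi_{\x}\ci s\mi\x\is\x\ra\tri\Delta,\x$; feeding this together with $\Delta,\x\vdash M$ into rule $(v)$ gives $\Gamma,\x\vdash\la\pi_{\x}\ci s\mi\x\is\x\ra\ci M$; and finally rule $(iv)$ abstracts to $\Gamma\vdash\lambda\x.\la\pi_{\x}\ci s\mi\x\is\x\ra\ci M$, the desired judgement.

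There is essentially no obstacle here; the only points requiring care are bookkeeping. I must check that the context $\Delta$ extracted in the first Generation-lemma step, as the codomain of $s$, is the same $\Delta$ that serves as the context of $\lambda\x.M$ — this is forced, since rule $(v)$ requires these to coincide and the codomain of $s$ over $\Gamma$ is unique. I would also note, as in Theorem~\ref{SubRed1}, that the side condition $(*)$ on $\alpha_1$ plays no role, since no renaming reduction occurs: the argument is a pure typing reconstruction.
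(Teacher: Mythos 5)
Your proposal is correct and takes essentially the same route as the paper, whose entire proof is the citation ``Theorem~\ref{SubRed1}, the case $Abs$'': unfolding $\Uparrow_{\x}\!(s)\equiv\la\pi_{\x}\ci s\mi\x\is\x\ra$ shows the conclusion is exactly the right-hand side of the $(Abs)$ step, and subject reduction gives the result. The explicit derivation you reconstruct (Generation lemma, then rules $(vii)$, $(ix)$, $(i)$, $(viii)$, $(v)$, $(iv)$ from the bottom up) is precisely the derivation tree displayed in the paper's proof of that case, so it is a harmless, self-contained restatement rather than a different argument.
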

 \begin{proof}Theorem~\ref{SubRed1}, the case $Abs$.
 \end{proof}
 \begin{lemma}\label{lemma31}If $\Gamma\vdash s\ci\lambda\Delta.M$ is
derivable, then $\Gamma\vdash\lambda
\Delta.\!\Uparrow_{\Delta}\!(s)\ci M$ is derivable.
\end{lemma}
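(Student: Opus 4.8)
The plan is to argue by induction on the length of the context $\Delta$, using the single-variable case (Lemma~\ref{lemma1.2}) as the engine and the identity $\Uparrow_{\Sigma,\x}(s)\equiv\,\Uparrow_{\x}\!(\Uparrow_{\Sigma}\!(s))$ noted right after Definition~\ref{uparrow} to glue the steps together. Before starting I would record one auxiliary fact: for any context $\Sigma$, the judgement $\Gamma\vdash\lambda\Sigma.N$ is derivable if and only if $\Gamma,\Sigma\vdash N$ is derivable. This follows from rule~(iv) together with the Generation lemma, iterated over the length of $\Sigma$ (strip the binders one at a time from the outside, and re-add them the same way). I also note, straight from the definitions, that $\lambda\Sigma,\x.N\equiv\lambda\Sigma.(\lambda\x.N)$.

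For the base case $\Delta\equiv nil$ we have $\lambda\,nil.M\equiv M$ and $\Uparrow_{nil}(s)\equiv s$, so the hypothesis and the conclusion are the same judgement $\Gamma\vdash s\ci M$ and there is nothing to prove. For the inductive step I would write $\Delta\equiv\Sigma,\x$, so that $s\ci\lambda\Delta.M\equiv s\ci\lambda\Sigma.(\lambda\x.M)$. Applying the induction hypothesis for $\Sigma$ to the term $\lambda\x.M$, derivability of $\Gamma\vdash s\ci\lambda\Sigma.(\lambda\x.M)$ yields derivability of $\Gamma\vdash\lambda\Sigma.\Uparrow_{\Sigma}(s)\ci\lambda\x.M$.

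Now I would pass under the prefix $\lambda\Sigma$ using the auxiliary fact: this last judgement is derivable iff $\Gamma,\Sigma\vdash\Uparrow_{\Sigma}(s)\ci\lambda\x.M$ is derivable. To the latter I apply Lemma~\ref{lemma1.2} with the context $\Gamma,\Sigma$ and the substitution $\Uparrow_{\Sigma}(s)$, obtaining derivability of $\Gamma,\Sigma\vdash\lambda\x.\Uparrow_{\x}(\Uparrow_{\Sigma}(s))\ci M$. Re-adding the prefix $\lambda\Sigma$ by the same auxiliary fact gives derivability of $\Gamma\vdash\lambda\Sigma.(\lambda\x.\Uparrow_{\x}(\Uparrow_{\Sigma}(s))\ci M)$. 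Finally I rewrite this with the two identities $\lambda\Sigma.(\lambda\x.N)\equiv\lambda\Sigma,\x.N\equiv\lambda\Delta.N$ and $\Uparrow_{\x}(\Uparrow_{\Sigma}(s))\equiv\Uparrow_{\Sigma,\x}(s)\equiv\Uparrow_{\Delta}(s)$, which turns it into $\Gamma\vdash\lambda\Delta.\Uparrow_{\Delta}(s)\ci M$, as required.

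I expect the only delicate point to be the strip-and-rebuild step, i.e. the equivalence between derivability of $\Gamma\vdash\lambda\Sigma.N$ and of $\Gamma,\Sigma\vdash N$; everything else is bookkeeping that matches the definitions of $\lambda\Delta.M$ and $\Uparrow_{\Delta}$ against the single noted identity. An alternative and essentially equivalent route is to verify directly that $s\ci\lambda\Delta.M\twoh\lambda\Delta.\Uparrow_{\Delta}(s)\ci M$ by a chain of $Abs$-steps, one per variable of $\Delta$, each applied under the compatible closure for $\lambda$, and then to invoke Subject reduction (Theorem~\ref{SubRed1}) along this reduction; the combinatorics of that chain is the same induction in disguise, with the same $\Uparrow$-composition identity doing the work.
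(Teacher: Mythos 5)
Your proof is correct and is essentially the paper's own argument: the paper's proof consists of recalling the identity $\Uparrow_{\Sigma,\x}\!(s)\equiv\,\Uparrow_{\x}\!(\Uparrow_{\Sigma}\!(s))$ and then applying Lemma~\ref{lemma1.2} repeatedly, which is precisely your induction on the length of $\Delta$. The strip-and-rebuild step you isolate (derivability of $\Gamma\vdash\lambda\Sigma.N$ iff derivability of $\Gamma,\Sigma\vdash N$, via the Generation lemma and rule~(iv)) is exactly what the paper's ``repeatedly'' leaves implicit, so you have only made the same proof explicit.
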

\begin{proof}Recall that
$\Uparrow_{\Sigma,\x}\!(s)\equiv\,\Uparrow_{\x}\!\!(\Uparrow_{\Sigma}\!(s))$.
Now we can use Lemma~\ref{lemma1.2} repeatedly.
\end{proof}
 \begin{theorem}[Subject reduction, part two]$ $\\
 Suppose\\
  $\Gamma,\x,\Delta\vdash M$ is derivable and\\
  $\Gamma,\x,\Delta\vdash M\ri_{\alpha_2} \Gamma,\y,\Delta\vdash\,
\Uparrow_{\Delta}\!\langle\pi_{\y}\mi \y\is \x\rangle\ci M$;\\
then\\ $\Gamma,\y,\Delta\vdash\,
\Uparrow_{\Delta}\!\langle\pi_{\y}\mi \y\is \x\rangle\ci M$ is
derivable.
\end{theorem}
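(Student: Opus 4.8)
The plan is to reduce the statement to the already-proved Lemma~\ref{lemma31}, which lifts a substitution through a block of abstractions, so that essentially all the real work is reused. Write $s\equiv\la\pi_{\y}\mi \y\is \x\ra$ for the renaming substitution. The first thing I would check is that $s$ is a well-formed substitution from the new context to the old one, namely that $\Gamma,\y\vdash s\tri\Gamma,\x$ is derivable. This is immediate: rule~(vii) gives $\Gamma,\y\vdash\pi_{\y}\tri\Gamma$, rule~(i) gives $\Gamma,\y\vdash \y$, and rule~(viii) combines the two into $\Gamma,\y\vdash\la\pi_{\y}\mi \y\is \x\ra\tri\Gamma,\x$. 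This holds regardless of whether $\y$ already occurs in $\Gamma$, since contexts permit repetitions; in particular the side condition $(**)$ plays no role here, exactly as $(*)$ played no role in the case $\alpha_1$ of Theorem~\ref{SubRed1}.

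Next I would manufacture the hypothesis required by Lemma~\ref{lemma31}. Starting from the assumption that $\Gamma,\x,\Delta\vdash M$ is derivable, repeated application of rule~(iv) (abstracting the variables of $\Delta$ one at a time, from the innermost outward) yields $\Gamma,\x\vdash\lambda\Delta.M$. Combining this with the typing of $s$ obtained above, via rule~(v), gives $\Gamma,\y\vdash s\ci\lambda\Delta.M$.

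Now I would invoke Lemma~\ref{lemma31} with the ambient context taken to be $\Gamma,\y$ and the substitution taken to be $s$: it converts $\Gamma,\y\vdash s\ci\lambda\Delta.M$ into $\Gamma,\y\vdash\lambda\Delta.\!\Uparrow_{\Delta}\!(s)\ci M$. Finally, stripping the abstractions by the Generation lemma (each outer $\lambda$ must arise from rule~(iv)) moves the variables of $\Delta$ back onto the left of the turnstile, giving $\Gamma,\y,\Delta\vdash\,\Uparrow_{\Delta}\!(s)\ci M$, which is precisely the desired judgement $\Gamma,\y,\Delta\vdash\,\Uparrow_{\Delta}\!\la\pi_{\y}\mi \y\is \x\ra\ci M$.

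I do not expect a genuine obstacle here beyond bookkeeping: the entire difficulty of pushing the renaming through $\Delta$ has already been packaged into Lemma~\ref{lemma31}, which is itself an iterate of Lemma~\ref{lemma1.2}, i.e. the $Abs$ case of part one. The only points demanding a little care are the two ``un/re-abstraction'' steps, where one must apply rule~(iv) and then the Generation lemma each exactly $|\Delta|$ times and confirm that the context recovered on the left is precisely $\Gamma,\y,\Delta$; and, as stressed above, observing that the free-variable side condition $(**)$ is irrelevant to well-formedness and is needed only for the soundness of $\alpha_2$ as an $\alpha$-renaming.
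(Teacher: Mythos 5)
Your proposal is correct and takes essentially the same route as the paper: both arguments move $\Delta$ across the turnstile (rule~(iv) and the Generation lemma) so that everything reduces to Lemma~\ref{lemma31}. The only cosmetic difference is that you build the typing $\Gamma,\y\vdash\la\pi_{\y}\mi \y\is \x\ra\tri\Gamma,\x$ by hand via rules (vii), (i), (viii) and then apply rule (v), whereas the paper also abstracts over $\x$ and cites the $\alpha_1$ case of Theorem~\ref{SubRed1} --- which consists of exactly that same three-rule derivation.
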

\begin{proof}By Generation lemma $\Gamma,\x,\Delta\vdash M$ is derivable iff
$\Gamma\vdash\lambda \x.\lambda\Delta.M$ is derivable. To conclude
the proof, it is sufficient to prove the following lemma.
\end{proof}
\begin{lemma}Suppose\\
 $\Gamma\vdash\lambda \x.\lambda\Delta.M$ is
derivable; then\\ $\Gamma\vdash\lambda
\y.\lambda\Delta.\Uparrow_{\Delta}\!\langle\pi_{\y}\mi \y\is
\x\rangle\ci M$ is derivable.
\end{lemma}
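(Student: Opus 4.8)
The plan is to reduce the statement to Lemma~\ref{lemma31}, after stripping the outer binder $\lambda \x$ and replacing it with a new outer binder $\lambda \y$ carrying the renaming substitution. First I would apply the Generation lemma to the derivable judgement $\Gamma \vdash \lambda \x. \lambda \Delta. M$: its outermost constructor is an abstraction, so rule~(iv) is forced and $\Gamma, \x \vdash \lambda \Delta. M$ is derivable.

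Next I would exhibit $\langle \pi_{\y} \mi \y \is \x \rangle$ as a well-formed substitution from $\Gamma, \y$ to $\Gamma, \x$. Rule~(vii) gives $\Gamma, \y \vdash \pi_{\y} \tri \Gamma$, rule~(i) gives $\Gamma, \y \vdash \y$, and rule~(viii) combines these into $\Gamma, \y \vdash \langle \pi_{\y} \mi \y \is \x \rangle \tri \Gamma, \x$. Since the target context $\Gamma, \x$ is precisely the context in which $\lambda \Delta. M$ was found well-formed, rule~(v) yields
$$\Gamma, \y \vdash \langle \pi_{\y} \mi \y \is \x \rangle \ci \lambda \Delta. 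M .$$

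Now I would invoke Lemma~\ref{lemma31} with ambient context $\Gamma, \y$ and substitution $s \equiv \langle \pi_{\y} \mi \y \is \x \rangle$: from the judgement just derived it follows that $\Gamma, \y \vdash \lambda \Delta. \Uparrow_{\Delta}\!(\langle \pi_{\y} \mi \y \is \x \rangle) \ci M$ is derivable. A final application of rule~(iv) reintroduces the binder $\lambda \y$, producing exactly $\Gamma \vdash \lambda \y. \lambda \Delta. \Uparrow_{\Delta}\!\langle \pi_{\y} \mi \y \is \x \rangle \ci M$, which is the claim.

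The genuine content is packaged inside Lemma~\ref{lemma31}, which iterates the $Abs$ case of Subject reduction (Lemma~\ref{lemma1.2}); once that is in hand the present argument is pure assembly and I expect no real obstacle. The one point worth noting is that no freshness hypothesis on $\y$ is needed here: rules~(i),~(iv),~(v),~(vii),~(viii) carry no side conditions on the introduced variable and contexts tolerate repetitions, so the derivation succeeds even if $\y$ already occurs in $\Gamma$ or $\Delta$. The side condition $(**)$ governing $\alpha_2$ concerns $FV$ only and plays no role in this derivability statement.
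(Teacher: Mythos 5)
Your proof is correct and follows essentially the same route as the paper: the paper cites the $\alpha_1$ case of Theorem~\ref{SubRed1} (whose displayed derivation is exactly your steps of building $\Gamma,\y\vdash\la\pi_{\y}\mi \y\is \x\ra\tri\Gamma,\x$ and applying rules (v) and (iv)) together with Lemma~\ref{lemma31}, which is also your key lemma. Your closing remark that no freshness condition on $\y$ is needed matches the paper's own observation that the side condition $(*)$ is never used in the $\alpha_1$ case.
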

\begin{proof}
We use Theorem~\ref{SubRed1} (the case $\alpha_1$) and
 Lemma~\ref{lemma31}.
\end{proof}
\newpage

\section{Two theorems about normal forms}
\begin{definition}By $\sigma\pi\alpha$ denote $\lambda\pi$ without
$Beta$.
\end{definition}
\begin{definition}By $\pi_{\x_1\ldots\x_n}$ denote
$(\ldots((\pi_{\x_1}\ci\pi_{\x_2})\ci\ldots)\ci\pi_{\x_n}$
\end{definition}
\begin{theorem}\label{Subnf}Suppose $\Gamma\vdash s\tri\Delta$ is derivable
and $s$ is a $\sigma\pi\alpha$-normal form (with respect to
$\arrow$); then $s$ has one of the following forms:\\ $
\begin{array}{lll}
(i) & id &\\
(ii) & \pi_{\x_1\ldots\x_n} & (n\geqslant 1)\\
(iii) & \la id\mi N_1\is \y_1\mi\ldots\mi N_k\is \y_k\ra &
(k\geqslant
1)\\
(iv) & \la\pi_{\x_1\ldots\x_n}\mi N_1\is \y_1\mi\ldots\mi N_k\is
\y_k\ra & (n\geqslant 1, k\geqslant 1)
\end{array}
$\\
\end{theorem}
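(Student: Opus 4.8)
The plan is to induct on the structure of the substitution $s$ (Definition~\ref{pervoe}), going through the four grammar clauses $id$, $\pi_{\x}$, $\la q\mi N\is \x\ra$ and $q\ci r$. Throughout I would use two facts to feed the induction hypothesis: by the Generation lemma the immediate sub-substitutions of a derivable $s$ are again derivable, and by the compatible-closure rules of Definition~\ref{lambdapi} every sub-substitution of a $\sigma\pi\alpha$-normal form is again a normal form (a redex inside $q$ or $r$ would lift to a redex of $s$). The two base cases are immediate: $id$ is form~(i), and $\pi_{\x}$ is form~(ii) with $n=1$. For the cons case $s\equiv\la q\mi N\is \x\ra$, the hypothesis forces $q$ into one of the four shapes, and appending the entry $N\is\x$ keeps us inside forms~(iii)/(iv): a base $id$ or $\pi_{\x_1\ldots\x_n}$ produces form~(iii) or~(iv) with $k=1$, while an existing cons-list is simply extended by one entry.

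The central case is the composition $s\equiv q\ci r$. First I would pin down the right factor $r$. It cannot itself be a composition, for then $Ass$ would rewrite $q\ci(r_1\ci r_2)$, contradicting normality; in particular this already excludes $r$ of form~(ii) with $n\geq 2$. By the induction hypothesis the only remaining shapes for $r$ are $id$, $\pi_{\y}$, or a cons-list $\la\ldots\ra$. But $r\equiv id$ is impossible, since $IdR$ rewrites $q\ci id$, and $r$ a cons-list is impossible, since $Map$ rewrites $q\ci\la\ldots\ra$. Therefore $r\equiv\pi_{\y}$.

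It remains to classify $q$ in $s\equiv q\ci\pi_{\y}$, again by the induction hypothesis. If $q\equiv id$ then $IdShift$ rewrites $id\ci\pi_{\y}$, so this cannot happen. If $q\equiv\pi_{\x_1\ldots\x_n}$ then $s\equiv\pi_{\x_1\ldots\x_n}\ci\pi_{\y}\equiv\pi_{\x_1\ldots\x_n\y}$, which is exactly form~(ii) with one more index; here I would check directly that no rule fires at the root, the only subtle point being that $\pi_1$ and $\pi_2$ require a \emph{variable}, not the substitution $\pi_{\y}$, as their right operand. Finally, if $q$ is a cons-list $\la q'\mi N\is \x\ra$ then $s\equiv\la q'\mi N\is \x\ra\ci\pi_{\y}$, and the argument splits on whether $\y\equiv\x$: if $\y\equiv\x$ the rule $ConsShift$ applies, so $s$ is not normal; if $\y\neq\x$ the composition is not well-formed in any context by Example~\ref{bedsubst}, contradicting the derivability of $\Gamma\vdash s\tri\Delta$. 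Thus the only normal composition is of form~(ii), completing the induction.

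The hard part is this very last sub-case. Syntactically, $\la q'\mi N\is \x\ra\ci\pi_{\y}$ with $\y\neq\x$ is a genuine $\sigma\pi\alpha$-normal form — no reduction rule fires on it — so the classification would simply be false without the derivability hypothesis. The one essential use of that hypothesis is to invoke Example~\ref{bedsubst} and discard this ill-formed shape; getting this interaction between normality and well-formedness right, rather than any single routine rewrite, is where the real content of the proof lies.
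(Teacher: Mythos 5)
Your proposal is correct, and it shares the paper's overall skeleton: induction over the structure of $s$, with $id$, $\pi_{\x}$ and the cons case dispatched exactly as the paper does, and with derivability invoked at precisely the same critical spot --- the shape $\la q'\mi N\is \x\ra\ci\pi_{\y}$ with $\y\neq\x$, which no rewrite rule touches and which only Example~\ref{bedsubst} (via the Generation lemma) can eliminate. You are also right that this interaction between normality and well-formedness is the real content. Where you diverge is in how the composition case is discharged. The paper factors it into a standalone lemma: if $\Gamma\vdash s\ci q\tri\Delta$ is derivable and \emph{both} $s$ and $q$ already have one of the forms $(i)$--$(iv)$, then $s\ci q$ $\sigma\pi\alpha$-\emph{reduces} to one of those forms; this is proved by five cases of explicit multi-step reduction computations ($s\ci id\arrow s$; $\pi$-towers composed via $Ass$; cons-lists against $\pi$-towers via $ConsShift$, with the subcases $k=m$, $k>m$, $k<m$ worked out; $Map$ when a cons-list sits on the right). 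Normality of the composite is then used only once, tacitly, to conclude that this reduction must be empty. You instead use normality throughout: every bad shape for the factors is killed by exhibiting a \emph{single} root redex ($Ass$, $IdR$, $Map$, $IdShift$, $ConsShift$), and the one surviving shape, $\pi_{\x_1\ldots\x_n}\ci\pi_{\y}$, is syntactically form $(ii)$ by the left-associated definition of $\pi_{\x_1\ldots\x_n\y}$ --- your remark that $\pi_1,\pi_2$ cannot fire because their right operand must be a variable, not the substitution $\pi_{\y}$, is exactly the right sort-level check here. Your route is more economical, since you never need to compute what anything reduces to; the paper's lemma is stronger and reusable --- it describes the reducts of compositions of such substitutions independently of whether the composite is normal, and the paper notes in passing that this lemma needs none of $\pi_1,\pi_2,\alpha_1,\alpha_2$, a fact your redex-spotting argument does not yield.
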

Of course, the terms $N_1,\ldots,N_k$ are not arbitrary, they are
$\sigma\pi\alpha$-normal forms (with respect to $\arrow$).
\begin{proof}The proof is by induction over the structure of $s$ (see Definition~\ref{pervoe}). The set of substitutions of the forms $(i)-(iv)$ contains $id$ and
$\pi_{\x}$ for any $\x$. This set  is also closed under
 $\la - \mi N\is \y\ra$ for any $N,\y$. To conclude the proof, it is sufficient to prove the
 following
lemma.
\end{proof}
\begin{lemma}If $\Gamma\vdash s\ci q\tri\Delta$ is derivable and
both $s,q$ belong to $(i),(ii),(iii),(iv)$, then $s\ci q\,$
$\sigma\pi\alpha$-reduces to one of the forms
$(i),(ii),(iii),(iv)$.
\end{lemma}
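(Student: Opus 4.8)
The plan is to argue by cases on the shape of the right factor $q$, performing the reductions that $\sigma\pi\alpha$ makes available for each shape and then invoking the closure of the set $\{(i),(ii),(iii),(iv)\}$ under $\la-\mi N\is\y\ra$ that was already observed in the proof of Theorem~\ref{Subnf}. Throughout, well-formedness of every intermediate substitution is guaranteed by Subject reduction (Theorem~\ref{SubRed1}), so that the Generation lemma may be applied to any reduct whenever a variable has to be matched.

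First I would dispose of the two simplest shapes of $q$. If $q\equiv id$, then $s\ci id\arrow s$ by $IdR$, and $s$ is already one of the four forms. If $q\equiv\pi_{\x_1\ldots\x_n}$, I argue by induction on $n$. For $n=1$ I split on $s$: when $s\equiv id$ the rule $IdShift$ gives $\pi_{\x_1}$ (form $(ii)$); when $s\equiv\pi_{\y_1\ldots\y_m}$ the composite $\pi_{\y_1\ldots\y_m}\ci\pi_{\x_1}$ is literally $\pi_{\y_1\ldots\y_m\x_1}$ (form $(ii)$); and when $s$ is a cons $\la s'\mi N\is\z\ra$, derivability of $s\ci\pi_{\x_1}$ together with the Generation lemma (rules $(ix)$, $(vii)$, $(viii)$) forces $\z\equiv\x_1$, so $ConsShift$ fires and yields $s'$, which is again one of the four forms. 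For $n>1$ I use $Ass$ to rewrite $s\ci(\pi_{\x_1\ldots\x_{n-1}}\ci\pi_{\x_n})$ into $(s\ci\pi_{\x_1\ldots\x_{n-1}})\ci\pi_{\x_n}$, reduce the inner composite to one of the four forms by the induction hypothesis, and finish with the $n=1$ argument applied to the result.

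The remaining cases are $q$ of form $(iii)$ or $(iv)$, treated by induction on the number $k$ of cons-entries. Writing $q\equiv\la q'\mi N_k\is\y_k\ra$, the rule $Map$ turns $s\ci q$ into $\la s\ci q'\mi s\ci N_k\is\y_k\ra$. Here $q'$ is again one of the four forms (it is $(iii)$ or $(iv)$ with $k-1$ entries when $k\geq 2$, and is $id$ or $\pi_{\x_1\ldots\x_n}$ when $k=1$), so by the induction hypothesis, or by the two base cases already settled, $s\ci q'$ reduces to some $r'$ among $(i)$--$(iv)$. By the compatible closure this propagates to $\la s\ci q'\mi s\ci N_k\is\y_k\ra\twoh\la r'\mi s\ci N_k\is\y_k\ra$, and prepending the cons $\la-\mi s\ci N_k\is\y_k\ra$ to $r'$ lands in form $(iii)$ or $(iv)$ by the closure property noted above.

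The main obstacle I anticipate is the shift case $q\equiv\pi_{\x_1\ldots\x_n}$: it is the only place where a genuine composition can survive in the output, and it is where the bookkeeping is most delicate. Both the reassociation by $Ass$ and the crucial $ConsShift$ step depend on tracking the context labels precisely, and the matching of the subscript $\x_1$ with the last cons-variable $\z$ is not syntactic but is forced only through derivability and the Generation lemma. Verifying that this matching always holds, and that Subject reduction keeps every intermediate substitution well-formed so the Generation lemma stays applicable, is the heart of the argument; the other cases are then routine applications of $IdR$, $Map$, and the closure property.
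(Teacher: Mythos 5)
Your proof is correct and uses essentially the same machinery as the paper's: the rules $IdR$, $IdShift$, $Ass$, $ConsShift$, $Map$, plus the Generation lemma to force the last cons-variable to coincide with the shift subscript (and, like the paper, no use of $\pi_1,\pi_2,\alpha_1,\alpha_2$). The only difference is organizational: where the paper runs a five-case analysis on the pair $(s,q)$ and settles the cons-shift case wholesale via the trichotomy $k=m$, $k>m$, $k<m$, you peel off one $\pi$ at a time by induction on the length of the shift (and one cons-entry at a time by induction on $k$), which is the same computation arranged inductively.
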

\begin{proof}Let us considered five cases.
\\[5pt]
Case 1: $q$ is $id$.\\
$s\ci id\arrow s$\\[5pt]
 Case 2: $s$ is $id$ and $q$ has the form
 $\pi_{\z_1\ldots\z_m}$.\\
  $id\ci
\pi_{\z_1\ldots\z_m}\twoh
 \pi_{\z_1\ldots\z_m}$\\[5pt]
Case 3: $s$ has the form $\pi_{\x_1\ldots\x_n}$ and $q$
has the form $\pi_{\z_1\ldots\z_m}$.\\
$(\pi_{\x_1\ldots\x_n})\ci\pi_{\z_1\ldots\z_m}\twoh
\pi_{\x_1\ldots\x_n\z_1\ldots\z_m}$\\[5pt]
Case 4: $s$ has the form $\la r\mi N_1\is \y_1\mi\ldots\mi N_k\is
\y_k\ra$ and $q$ has the form $\pi_{\z_1\ldots\z_m}$, where $r$ is
$id$ or
$\pi_{\x_1\ldots\x_n}$. Hence, $s\circ q$ has the form\\
$\la r\mi N_1\is \y_1\mi\ldots\mi N_k\is
\y_k\ra\ci\pi_{\z_1\ldots\z_m}$.
 By Generation lemma, $\y_k= \z_1$,\\ $\y_{k-1}= \z_2$, and so
on (see Example~\ref{bedsubst}). \\[5pt]
If $k=m$,\,then\\[5pt]
$\la r\mi N_1\is \y_1\mi\ldots\mi N_k\is
\y_k\ra\ci\pi_{\z_1\ldots\z_m}$\quad is the same as\\
$\la r\mi N_1\is \y_1\mi\ldots\mi N_k\is
\y_k\ra\ci\pi_{\y_k\ldots\y_1}\twoh r$\\[5pt]
If $k>m$,\,then \\[5pt]
$\la r\mi N_1\is \y_1\mi\ldots\mi N_k\is
\y_k\ra\ci\pi_{\z_1\ldots\z_m}$\quad is the same as\\
$\la r\mi N_1\is \y_1\mi\ldots\mi N_k\is
\y_k\ra\ci\pi_{\y_k\ldots\y_{k-m+1}}\twoh \la r\mi N_1\is
\y_1\mi\ldots\mi N_{k-m}\is \y_{k-m}\ra $\\[5pt]
If $k<m$,\,then\\[5pt]
$\la r\mi N_1\is \y_1\mi\ldots\mi N_k\is
\y_k\ra\ci\pi_{\z_1\ldots\z_m}$\quad is the same as\\
$\la r\mi N_1\is \z_k\mi\ldots\mi N_k\is
\z_1\ra\ci\pi_{\z_1\ldots\z_m}$\\
  If $r$ is $id$, this term reduces to
  $\pi_{\z_{k+1}\ldots\z_m}$. If $r$ is
  $\pi_{\x_1\ldots\x_n}$, this term reduces to
  $\pi_{\x_1\ldots\x_n\z_{k+1}\ldots\z_m}$.\\[5pt]
 Case 5: $q$ has the form $\la r\mi N_1\is
\y_1\mi\ldots\mi N_k\is \y_k\ra$,
 where $r$ is $id$ or $\pi_{\z_1\ldots\z_m}$.\\
$s\ci\la r\mi N_1\is \y_1\mi\ldots\mi N_k\is \y_k\ra\twoh\la s\ci
r\mi s\ci N_1\is \y_1\mi\ldots\mi s\ci N_k\is \y_k\ra$\\
  Then we
use the previous cases to reduce $s\ci r$.
\end{proof}
Note that we do not use $\pi_1,\pi_2,\alpha_1,\alpha_2$ in this
proof.
\begin{definition}A term $M$ is called \emph{pure} iff it does not
contain sub-terms of the shape $s\ci N$.
\end{definition}
\begin{theorem}
If $\Gamma\vdash M$ is derivable and $\Gamma\vdash M$ is a
$\sigma\pi\alpha$-normal form (with respect to $\ri$), then $M$ is
pure.\label{pure}
\end{theorem}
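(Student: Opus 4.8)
The plan is to prove the contrapositive shape of the statement by \emph{structural induction on $M$}, arranged so that all the real work sits in the single case where $M$ is itself a closure. Observe first that a $\ri$-normal judgement is in particular $\arrow$-normal: any $\arrow$-step on $M$ lifts, through the compatible closure, to a $\ri$-step on $\Gamma\vdash M$. Hence I may assume $M$ is a $\sigma\pi\alpha$-normal form for $\arrow$, so that every subterm and every substitution inside $M$ is $\arrow$-normal. The engine of the argument is a Main Lemma, proved separately and then fed into the induction: \emph{if $\Gamma\vdash s\ci N$ is derivable, then $\Gamma\vdash s\ci N$ is $\ri$-reducible}; equivalently, no judgement whose term is a top-level closure is $\ri$-normal.

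For the induction, the cases $M\equiv\x$, $M\equiv M_1M_2$ and $M\equiv\lambda\x.M_1$ are routine once one checks that $\ri$-normality is inherited by the immediate subterms. For an application, suppose $\alpha_2$ applied to $\Gamma\vdash M_1$ at a split $\Gamma\equiv\Gamma_0,\x,\Delta$; since $FV(M_1M_2)=FV(M_1)\cup FV(M_2)$ and each operator $O_{\lambda\x}$, $O_{\pi}$ distributes over $\cup$, we get $FV(\x,\Delta\vdash M_1)\subseteq FV(\x,\Delta\vdash M_1M_2)$, and choosing a fresh variable for the (finite) larger family shows $\alpha_2$ also applies to $\Gamma\vdash M_1M_2$, contradicting its $\ri$-normality. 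Thus $\Gamma\vdash M_1$ and $\Gamma\vdash M_2$ are $\ri$-normal and the induction hypothesis gives purity. For an abstraction, $\ri$-normality of $\Gamma\vdash\lambda\x.M_1$ forces that of $\Gamma,\x\vdash M_1$: a split of $\Gamma,\x$ strictly inside $\Gamma$ corresponds, via $FV(\w,\Delta',\x\vdash M_1)=FV(\w,\Delta'\vdash\lambda\x.M_1)$, to an $\alpha_2$-redex of the parent, while the split at the final bound $\x$ has side condition $\x\in\bigcup_i FV_i(\lambda\x.M_1)$, which is exactly the condition $(*)$ making $\lambda\x.M_1$ an $\alpha_1$-redex; both are excluded. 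The induction hypothesis then yields purity of $M_1$. In the remaining case, where $M$ is a top-level closure, the Main Lemma makes the hypothesis vacuous.

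To prove the Main Lemma I would dispose of the easy situations first. If $s\ci N$ is $\arrow$-reducible we are done; otherwise $s$ and $N$ are $\arrow$-normal and $s\ci N$ is not a redex, so by Theorem~\ref{Subnf} the substitution $s$ is one of $id$, $\pi_{\x_1\ldots\x_n}$, $\la id\mi\ldots\ra$, $\la\pi_{\x_1\ldots\x_n}\mi\ldots\ra$. A short case analysis on the shape of $N$ eliminates everything but one configuration: if $N$ is an application, an abstraction, or itself a closure, then $s\ci N$ is an $App$-, $Abs$-, or $Clos$-redex; and if $N\equiv\y$ is a variable, then $id\ci\y$ and $\la\ldots\ra\ci\y$ reduce by $IdVar$, $ConsVar$ or $New$, while $\pi_{\x_1\ldots\x_n}\ci\y$ reduces by $\pi_1$ or $\pi_2$ \emph{unless} its last subscript $\x_n$ equals $\y$. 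Hence the only $\arrow$-normal top-level closure is $\pi_{\x_1\ldots\x_n}\ci\y$ with $\x_n\equiv\y$, and it remains to produce a $\ri$-step from it, which can only be an $\alpha_2$-step.

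The decisive point, and the step I expect to be the main obstacle, is verifying the side condition $(**)$ for this $\alpha_2$-step. By the Generation lemma the context must be $\Gamma\equiv\Delta,\y,\x_{n-1},\ldots,\x_1$ with $\Delta\vdash\y$, so I would fire $\alpha_2$ at the occurrence $\x_n\equiv\y$, with prefix $\Delta$, distinguished variable $\y$, and suffix $\x_{n-1},\ldots,\x_1$. Using Lemma~\ref{corollary23} to right-associate and then the corollary to Definition~\ref{FV}, one finds $FV(\pi_{\x_1\ldots\x_n}\ci\y)$ has $\y$ occurring only at level $n+1$. Binding the $n-1$ suffix variables lowers this to level $2$ (each $O_{\lambda\z}$ decrements levels $\geqslant 2$), and binding the outer $\lambda\y$ sends the resulting level-$2$ occurrence to level $1$, since $FV_1(\lambda\y.-)=(FV_1\setminus\{\y\})\cup FV_2$. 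Thus $\y\in\bigcup_i FV_i(\y,\x_{n-1},\ldots,\x_1\vdash\pi_{\x_1\ldots\x_n}\ci\y)$, a finite family, a fresh $\y'$ exists, and $\alpha_2$ applies. This free-variable bookkeeping is exactly the phenomenon of a variable remaining free under its own binder described before Example~\ref{primer}; the judgements $x,x\vdash\pi_x\ci x$ and $x,x,z\vdash\pi_z\ci\pi_x\ci x$ are the instances $n=1$ and $n=2$ and can be used to sanity-check the index shifts.
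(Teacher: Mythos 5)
Your proposal is correct and is essentially the paper's own argument: both rest on Theorem~\ref{Subnf} to reduce the problem to blocks of the form $\pi_{\x_1\ldots\x_m\y}\ci \y$, on the level computation showing that such a block forces $\y$ into $FV_1$ of the relevant judgement so that $\alpha_2$ applies, and on monotonicity of the $FV$-operators (Lemma~\ref{monotone}) together with the identity $FV(\Gamma,\x\vdash M)=FV(\Gamma\vdash\lambda\x.M)$ to move applicability of $\alpha_1$/$\alpha_2$ across applications and abstractions. The only difference is organizational: you run the structural induction contrapositively (``$\ri$-normality is inherited by immediate subterms,'' with a separate lemma killing the top-level closure case), whereas the paper first shows a normal $M$ is built from variables and blocks and then proves by induction that any block-containing well-formed term admits an $\alpha_1$- or $\alpha_2$-step; the case analyses correspond one-for-one.
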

\begin{proof}
Suppose $M$ contain a sub-term of the shape $s\ci N$; then $N$
must be a variable (we  denote it by $\y$), else we can apply
$Abs, App$ or $Clos$. The substitution $s$ is a
$\sigma\pi\alpha$-normal form  and must have the form
$\pi_{\x_1\ldots\x_n}$ (see theorem~\ref{Subnf}), else we can
apply $IdVar, ConsVar$ or $New$. Further,  $\x_n$ in
$(\pi_{\x_1\ldots\x_n})\ci \y$ must coincide with $\y$, else we
can apply $\pi_1$ or $\pi_2$. We see that $M$ must be constructed
from  variables and  blocks of the form
 $(\pi_{\x_1\ldots\x_m\y})\ci \y\quad (m\geqslant 0)$ by
using application and abstraction. To conclude the proof, it is
sufficient to prove the following lemma.
\end{proof}
\begin{lemma}
If $\Gamma\vdash M$ is derivable and $M$ is constructed from
variables and  blocks of the form $(\pi_{\x_1\ldots\x_m\y})\ci
\y\quad(m\geqslant 0)$ by using application and abstraction, then
$M$ is pure (this means that $M$ does not contain blocks) or we
can apply $\alpha_1$ or $\alpha_2$ to $\Gamma\vdash M$.
\end{lemma}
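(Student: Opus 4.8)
The plan is to reduce the statement to a clean criterion for the applicability of $\alpha_1$ and $\alpha_2$ expressed through the levels of free variables. Reading off clause $(v)$ of Definition~\ref{FV}, abstraction over $\x$ deletes $\x$ only from level $1$ and otherwise simply shifts every level down by one; hence for any subterm $\lambda\x.M'$ one has $\x\in\bigcup_{i\geqslant1}FV_i(\lambda\x.M')$ if and only if $\x\in\bigcup_{i\geqslant2}FV_i(M')$, and likewise the side condition $(**)$ holds for a split $\Gamma=\Gamma',\x,\Delta$ exactly when $\x\in\bigcup_{i\geqslant2}FV_i(\lambda\Delta.M)$. So it suffices to exhibit, whenever $M$ contains a block, some binder — either an internal $\lambda\x$ of $M$ or a variable $\x$ of the context $\Gamma$ — whose bound variable occurs free at level $\geqslant2$ in its scope; the freshness required of the new name is never an obstacle, since every $FV_i$ is finite.

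First I would fix one block $(\pi_{\x_1\ldots\x_m\y})\ci\y$ occurring in $M$ and compute, from the clauses for $O_{\pi}$ and for composition in Definition~\ref{FV}, that its only free variable $\y$ sits at level $m+2$. Next I would read off the context at the position of this block: since applications do not extend the context, it equals $\Gamma,u_1,\ldots,u_r$, where $u_1,\ldots,u_r$ are the abstraction variables on the path from the root of $M$ down to the block, listed outermost first. Applying the Generation lemma to the well-formedness of the block forces this context to end, reading right to left, in $\x_1,\x_2,\ldots,\x_m,\y$ (with a further occurrence of $\y$ somewhere before it); in particular its $(m+1)$-st entry from the right is a $\y$.

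It remains to trace the level of this occurrence outward. Passing through each of the $m$ binders $\lambda\x_1,\ldots,\lambda\x_m$ the level stays $\geqslant2$ throughout (it runs from $m+2$ down to $2$), so the occurrence is never captured there, even if some $\x_j$ happens to coincide with $\y$; it arrives at level $2$ precisely in the scope of the $(m+1)$-st binder, which is a $\lambda\y$. If $r\geqslant m+1$ this $\lambda\y$ is an internal abstraction $\lambda\y.M''$ of $M$ with $\y\in FV_2(M'')$, and I would apply $\alpha_1$. If $r\leqslant m$ the target $\y$ lies in the context, say $\Gamma=\Gamma',\y,\Delta$ with $\Delta=\x_m,\ldots,\x_{r+1}$, and the same level count gives $\y\in FV_2(\lambda\Delta.M)$, so $\alpha_2$ applies. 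These two ranges of $r$ are exhaustive, so one of the two redexes is always available. As a sanity check, this is exactly how the middle $x$ gets renamed in the worked examples $\lambda x x z.\pi_z\ci\pi_x\ci x$ and $x,x,z\vdash\pi_z\ci\pi_x\ci x$.

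The step I expect to be the main obstacle is the bookkeeping that ties the $FV$-level computation to the syntactic position of the binders: one must show that the level decrements by exactly one at each enclosing abstraction and never drops to $1$ before reaching the $\lambda\y$ guaranteed by the Generation lemma, and one must treat carefully the boundary between the internal-binder case and the context case ($r$ against $m+1$) so that the block's $\y$ contributes at level exactly $2$ under the chosen binder. Once the monotonicity of $O_{\lambda\x}$ and $O_{\pi}$ (Lemma~\ref{monotone}) is invoked to guarantee that this contribution is not lost when the other subterms are unioned in, the verification of $(*)$ and $(**)$ is routine.
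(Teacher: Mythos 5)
Your proposal is correct, but it takes a genuinely different route from the paper. The paper proceeds by structural induction on $M$: the base case is a bare block, where the Generation lemma forces the judgement to be $\Delta\mi\y\mi\x_m\mi\ldots\mi\x_1\vdash(\pi_{\x_1\ldots\x_m\y})\ci\y$, so $\alpha_2$ applies immediately; the inductive steps then transport applicability of $\alpha_1$ or $\alpha_2$ across the constructors --- an $\alpha_2$-step on the body of $\lambda\x.N$ that renames the innermost context entry becomes an $\alpha_1$-step on $\lambda\x.N$ itself (possibly combined with $Abs$), while for an application $NL$ the side condition $(**)$ is preserved because $FV(N)\subseteq FV(NL)$ and the operators are monotone (Lemma~\ref{monotone}). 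You avoid induction altogether: you fix one block, use the Generation lemma to determine the entire context at its position, and locate the capturing binder by level arithmetic --- the block's $\y$ enters at level $m+2$, drops by exactly one at each enclosing abstraction, and cannot be deleted before reaching level $2$ (deletion happens only at level $1$), so the $(m+1)$-st enclosing binder, which the Generation lemma guarantees carries the name $\y$, is either an internal $\lambda\y$ (fire $\alpha_1$) or an entry of the context (fire $\alpha_2$); your handling of the boundary between these two cases ($r$ against $m+1$) and of accidental coincidences $\x_j\equiv\y$ is exactly right. What your argument buys is an explicit, constructive identification of which redex to contract, in the spirit of the paper's worked examples such as $\lambda x x z.\pi_z\ci\pi_x\ci x$; what the paper's induction buys is economy, since the redex-correspondence observation plus monotonicity replaces all of your positional and level bookkeeping. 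Both proofs ultimately rest on the same ingredients: the Generation lemma, the calculus of free variables, and Lemma~\ref{monotone}.
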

\begin{proof}The proof is by induction over the structure of $M$. Let us consider four cases.\\[5pt]
Case 1: $M$ is a variable. The proof is trivial.\\[5pt]
 Case 2:  $ M$
has the form $(\pi_{\x_1\ldots\x_m\y})\ci \y$. By Generation
lemma, $\Gamma\vdash M$
has the form\\
$\Delta\mi\y\mi\x_m\mi\ldots\mi \x_1\vdash
(\pi_{\x_1\ldots\x_m\y})\ci \y$  \\
and we can apply $\alpha_2$ , because\\
 $FV(\y\mi\x_m\mi\ldots\mi
\x_1\vdash (\pi_{\x_1\ldots\x_m\y})\ci \y)=\la\{\y\},\emptyset,\emptyset,\ldots\ra$\\[10pt]
 Case 3: $M$ has the form $\lambda \x.N$. By Generation lemma, $\Gamma,\x\vdash
N$ is derivable. Suppose $N$ contains a block of the form
 $(\pi_{\x_1\ldots\x_m\y})\ci \y$. By
induction hypothesis, we can apply $\alpha_1$ or $\alpha_2$ to
$\Gamma,\x\vdash N$. But any application  of $\alpha_1$ or
$\alpha_2$ to $\Gamma,\x\vdash N$ corresponds to some application
of $\alpha_1$ or $\alpha_2$ (and, in some cases, $Abs$) to
$\Gamma\vdash\lambda \x.N$. For example,\\
$\Gamma,\x\vdash N\ri_{\alpha_2}\Gamma,\y \vdash\la\pi_{\y}\mi
\y\is \x\ra\ci N$\\
corresponds to\\
 $\Gamma\vdash\lambda
\x.N\ri_{\alpha_1}\Gamma\vdash\lambda \y.\la\pi_{\y}\mi
\y\is \x\ra\ci N$\\[10pt]
 Case 4: $M$ has the form $NL$. By Generation lemma, $\Gamma\vdash N$ and
$\Gamma\vdash L$ are derivable. Suppose one of these terms
contains a block of the form $(\pi_{\x_1\ldots\x_m\y})\ci \y$. For
clarity, let it be $N$. By induction hypothesis,  we can apply
$\alpha_1$ or $\alpha_2$ to $\Gamma\vdash N$. I claim that we can
apply $\alpha_1$ or $\alpha_2$ to $\Gamma\vdash NL$. For the case
$\alpha_1$ is nothing to prove, because any $\alpha_1$-redex in
$N$ occurs in $NL$ too. For the case $\alpha_2$, suppose that
$\Gamma\vdash NL$ has the form $\Sigma,\x,\Delta\vdash NL$. Recall
that $FV(NL)=FV(N)\cup FV(L)$, hence $FV(N)\subseteq FV(NL)$. By
Lemma~\ref{monotone},\\ $FV(\x,\Delta\vdash N))\subseteq FV(\x,\Delta\vdash NL)$.\\
Hence if $\x\in \bigcup_{i\geqslant 1} FV_i(\x,\Delta\vdash N)$,
then $\x\in \bigcup_{i\geqslant 1} FV_i(\x,\Delta\vdash NL)$. If
we can apply $\alpha_2$ to $\Sigma,\x,\Delta\vdash N$, then we can
apply $\alpha_2$ to $\Sigma,\x,\Delta\vdash NL$.
\end{proof}
 Warning!
In general, the terms $N_1,\ldots,N_k$ in the statement of
Theorem~\ref{Subnf} are not pure.  For example, the judgement\\
$x,x\vdash\la id\mi\pi_x\ci x\is y\ra\tri x,x,y$\\ is derivable
and the substitution\\ $\la id\mi\pi_x\ci x\is y\ra$\\ is a
$\sigma\pi\alpha$-normal form.
\newpage

\section{Relation with $\lambda\sigma$}
\begin{definition} The symbols $U,V,W$ range over \emph{name-free
terms} and the symbols $u,v,w$  range over \emph{name-free
substitutions}. The sets of name-free terms and name-free
substitutions are defined inductively as follows:
\begin{align*}
U,V::&= 1 \mid UV \mid \lambda U \mid u\ci U \\
u,v::&= id \mid \pi \mid \langle u\mi V \rangle \mid u\ci v
\end{align*}
\end{definition}
\begin{definition}(The calculus $\lambda\sigma$ in the new notation).\\[10pt]
$
\begin{array}{ccc}
& \ruleone{ U_1\arrow U_2}{ \lambda
U_1\arrow\lambda U_2} &\\[20pt]
& \ruleone{ U_1\arrow U_2}{ U_1V\arrow U_2V} & \ruleone{ V_1\arrow
V_2}{
UV_1\arrow UV_2}\\[20pt]
& \ruleone{ u_1\arrow u_2}{u_1\ci U\arrow u_2\ci U} & \ruleone{
U_1\arrow
U_2}{u\ci U_1\arrow u\ci U_2}\\[20pt]
& \ruleone{ u_1\arrow u_2}{ \langle u_1\mi V \rangle\arrow \langle
u_2\mi V\rangle} & \ruleone{ V_1\arrow V_2}{ \langle u\mi V_1
\rangle\arrow \langle
u\mi V_2\rangle}\\[20pt]
& \ruleone{ u_1\arrow u_2}{ u_1\ci v\arrow u_2\ci v} & \ruleone{
v_1\arrow v_2}{ u\ci v_1\arrow u\ci
v_2}\\[20pt]
\end{array}
$

$
\begin{array}{lll}
(Beta) & (\lambda U)V\rightarrow \langle id\mi V\rangle\ci U &\\
(Abs) & u\ci \lambda U\rightarrow\lambda \langle \pi\ci u\mi
1\rangle\ci U &\\
(App) & u\ci UV\rightarrow(u\ci U)(u\ci  V) &\\
(ConsVar) & \langle u\mi V\rangle\ci 1\rightarrow V &\\
(IdVar) &  id\ci 1\rightarrow 1 &\\
(Clos) & u\ci v\ci V\rightarrow (u\ci v)\ci V &\\
(Ass) & u\ci v\ci  w\rightarrow (u\ci v)\ci w &\\
(IdR) & u\ci id\rightarrow u &\\
(IdShift) & id\ci \pi\rightarrow\pi &\\
(ConsShift) & \langle u\mi V\rangle\ci \pi\rightarrow u &\\
(Map) & u\ci \langle v\mi V\rangle\rightarrow\langle u\ci v \mi
u\ci V\rangle &
\end{array}
$\\[5pt]
\end{definition}
\begin{definition}By $\sigma$ denote $\lambda\sigma$ without $Beta$.\\
By $\sigma(U)$ denote the $\sigma$-normal form of $U$ (this normal
form exists and is uniquely defined because $\sigma$ is strongly
normalizing and confluent).
\end{definition}
\begin{definition}By definition, put\\
$\underline{n}\equiv\underbrace{((\ldots(\pi\ci\pi)\ci\ldots)\ci\pi)}_{\mbox{\scriptsize
$n-1$ times}}\ci 1 \qquad(n\geqslant 1)$
\end{definition}
We see that $\underline{1}\equiv 1$, $\underline{2}\equiv\pi\ci
1$, and $\underline{n+1}\equiv\sigma(\pi\ci\underline{n})$.
\begin{definition} $(\Gamma\vdash
M)\boldsymbol{\Rightarrow} U$ is shorthand for ``the name-free
term
$U$ corresponds to the judgement $\Gamma\vdash M$."\\
$(\Gamma\vdash s\tri\Delta)\boldsymbol{\Rightarrow} u$ is
shorthand for ``the name-free
 substitution
$u$ corresponds to the
 judgement $\Gamma\vdash s\tri\Delta$."
\end{definition}
\begin{definition}\label{transfer}(The rules of correspondence between judgements and
name-free
terms/substitutions).\\[5pt]
 $
\begin{array}{ll}
(i)& (\Gamma,\x\vdash \x)\boldsymbol{\Rightarrow}  1 \\[5pt]
 (ii)& \ruleone{(\Gamma\vdash
\x)\boldsymbol{\Rightarrow}  \underline{n}}{(\Gamma,\y\vdash \x)\boldsymbol{\Rightarrow} \underline{n+1}\using {\quad(\x\neq \y)}}\\[20pt]
 (iii)& \ruletwo{(\Gamma\vdash M)\boldsymbol{\Rightarrow}  U}{(\Gamma\vdash N)\boldsymbol{\Rightarrow } V}{(\Gamma\vdash
MN)\boldsymbol{\Rightarrow}  UV}\\[15pt]
 (iv)& \ruleone{(\Gamma,\x\vdash M)\boldsymbol{\Rightarrow }  U}{(\Gamma\vdash\lambda \x.M)\boldsymbol{\Rightarrow}  \lambda U}\\[15pt]
 (v)& \ruletwo{(\Gamma\vdash s \tri\Delta)\boldsymbol{\Rightarrow}  u }{(\Delta\vdash M)\boldsymbol{\Rightarrow }  U}{(\Gamma\vdash
 s \ci M)\boldsymbol{\Rightarrow }  u \ci U}\\[15pt]
(vi)& (\Gamma\vdash id \tri\Gamma)\boldsymbol{\Rightarrow}  id  \\[5pt]
(vii)& (\Gamma,\x\vdash\pi_{\x} \tri\Gamma)\boldsymbol{\Rightarrow}   \pi  \\[5pt]
(viii)& \ruletwo{(\Gamma\vdash s
\tri\Delta)\boldsymbol{\Rightarrow}
 u  }{(\Gamma\vdash
N)\boldsymbol{\Rightarrow}  V}{(\Gamma\vdash
\langle s\mi N\is \x\rangle \tri\Delta ,\x)\boldsymbol{\Rightarrow} \la u\mi V\rangle }\\[15pt]
(ix)& \ruletwo{(\Gamma\vdash s \tri\Delta)\boldsymbol{\Rightarrow}
 u }{(\Delta\vdash q
\tri\Sigma)\boldsymbol{\Rightarrow}  v  }{(\Gamma \vdash s \ci q
\tri\Sigma)\boldsymbol{\Rightarrow}  u \ci v }\\[20pt]
\end{array}
$
\end{definition}
\begin{example}
$$
(x\vdash x)\boldsymbol{\Rightarrow} 1
$$
\end{example}
\begin{example}
$$
\ruleone{(x\vdash x)\boldsymbol{\Rightarrow} 1}{(x,y \vdash
x)\boldsymbol{\Rightarrow}  \pi\ci 1}
$$
\end{example}
\begin{example}
$$
\ruletwo{(x,y \vdash \pi_y\tri x) \boldsymbol{\Rightarrow} \pi}{(x
\vdash x) \boldsymbol{\Rightarrow} 1}{(x,y \vdash \pi_y\ci x)
\boldsymbol{\Rightarrow}
  \pi\ci 1}
$$
\end{example}
\begin{corollary}
 $(\Gamma,\x,\y_1,\ldots,\y_n\vdash
\x)\boldsymbol{\Rightarrow}\underline{n+1}$ if $\x\neq
\y_1,\ldots,\x\neq \y_n$.
\end{corollary}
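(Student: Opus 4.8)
The plan is to argue by a straightforward induction on $n$, invoking only rules $(i)$ and $(ii)$ of Definition~\ref{transfer}, together with the abbreviation $\underline{1}\equiv 1$ and the observation that rule $(ii)$ is stated so as to output $\underline{n+1}$ directly from $\underline{n}$.

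For the base case $n=0$ the judgement in question is simply $\Gamma,\x\vdash\x$ (the list of $\y_i$ is empty, so the hypothesis $\x\neq\y_i$ is vacuous). Rule $(i)$ gives $(\Gamma,\x\vdash\x)\boldsymbol{\Rightarrow}1$, and since $\underline{1}\equiv 1$ this is exactly the asserted value $\underline{0+1}$.

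For the induction step I would assume the claim for $n$, namely $(\Gamma,\x,\y_1,\ldots,\y_n\vdash\x)\boldsymbol{\Rightarrow}\underline{n+1}$ whenever $\x\neq\y_1,\ldots,\x\neq\y_n$. Given a further variable $\y_{n+1}$ with $\x\neq\y_{n+1}$, I apply rule $(ii)$ with the context $\Gamma,\x,\y_1,\ldots,\y_n$ playing the role of the schematic ``$\Gamma$'' and $\y_{n+1}$ playing the role of ``$\y$''. Its premise is supplied by the induction hypothesis, its side condition $\x\neq\y_{n+1}$ holds by assumption, and its conclusion is $(\Gamma,\x,\y_1,\ldots,\y_{n+1}\vdash\x)\boldsymbol{\Rightarrow}\underline{n+2}$, which is precisely the statement for $n+1$.

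There is no genuine obstacle here: the only point needing care is to match the metavariables of rule $(ii)$ against the present context correctly, and to note that because $(ii)$ already yields the value $\underline{n+1}$ outright (rather than $\pi\ci\underline{n}$), no appeal to $\underline{n+1}\equiv\sigma(\pi\ci\underline{n})$ or to any $\sigma$-normalisation is required. Equivalently, one could bypass the explicit induction and read the result off the Generation lemma: the unique derivation of $\Gamma,\x,\y_1,\ldots,\y_n\vdash\x$ peels off $\y_n,\ldots,\y_1$ by $n$ successive instances of rule $(ii)$ and terminates at $\Gamma,\x\vdash\x$ via rule $(i)$, so the corresponding values traversed by Definition~\ref{transfer} are $1,\underline{2},\ldots,\underline{n+1}$ in turn.
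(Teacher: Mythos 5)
Your proof is correct and matches the paper's intent exactly: the paper states this as an immediate corollary of Definition~\ref{transfer} (with no written proof), and the intended argument is precisely your induction, starting from rule $(i)$ and iterating rule $(ii)$, which outputs $\underline{n+1}$ from $\underline{n}$ directly so that no $\sigma$-normalisation is needed.
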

\begin{lemma}\label{poleznaya}If $(\Gamma\vdash M)\boldsymbol{\Rightarrow} U$, then
$\Gamma\vdash M$ is derivable.
\end{lemma}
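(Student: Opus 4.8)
The plan is to prove a slightly stronger, mutually recursive statement that also covers substitutions: \emph{if $(\Gamma\vdash M)\boldsymbol{\Rightarrow} U$ then $\Gamma\vdash M$ is derivable, and if $(\Gamma\vdash s\tri\Delta)\boldsymbol{\Rightarrow} u$ then $\Gamma\vdash s\tri\Delta$ is derivable.} The strengthening is forced by rule $(v)$ of Definition~\ref{transfer}, whose premise $(\Gamma\vdash s\tri\Delta)\boldsymbol{\Rightarrow} u$ is a substitution-correspondence; without the second clause in the hypothesis the induction could not pass through $s\ci M$.

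I would then argue by induction on the derivation of the correspondence $\boldsymbol{\Rightarrow}$, that is, on the height of the tree built from the rules $(i)$--$(ix)$ of Definition~\ref{transfer}. The key observation, which reduces the argument to bookkeeping, is that these rules stand in exact one-to-one correspondence with the inference rules $(i)$--$(ix)$ of Definition~\ref{defsequentsder}: in each rule of Definition~\ref{transfer} the judgement part of every premise is precisely a premise of the like-numbered rule of Definition~\ref{defsequentsder}, the judgement appearing in the conclusion is precisely its conclusion, and the side condition $\x\neq\y$ in rule~$(ii)$ is identical in both formulations. Consequently, for each case it suffices to apply the induction hypothesis to the premises and then invoke the matching rule of Definition~\ref{defsequentsder} with the same index.

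Concretely: for the axioms $(i)$, $(vi)$, $(vii)$ there is nothing to prove beyond citing the corresponding axiom, so $\Gamma,\x\vdash\x$, $\Gamma\vdash id\tri\Gamma$, and $\Gamma,\x\vdash\pi_{\x}\tri\Gamma$ are immediate. Rule $(ii)$ uses the term clause of the hypothesis once and then rule~$(ii)$ (with $\x\neq\y$); rules $(iii)$ and $(iv)$ use the term clause (twice and once, respectively); rule $(ix)$ uses the substitution clause twice; and rules $(v)$ and $(viii)$ genuinely mix the two clauses, each combining a substitution-correspondence $(\Gamma\vdash s\tri\Delta)\boldsymbol{\Rightarrow} u$ with a term-correspondence in their premises. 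In every case the conclusion judgement is then produced by the rule of the same number in Definition~\ref{defsequentsder}.

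I do not expect a genuine obstacle. The only point requiring care is recognising the mutual recursion—that terms and substitutions must be treated together, and that it is precisely rules $(v)$ and $(viii)$ that create the cross-dependence—so I would state the strengthened pair of clauses up front before beginning the case analysis. Once that is in place, the induction is a mechanical traversal matching each $\boldsymbol{\Rightarrow}$-rule to its derivation counterpart.
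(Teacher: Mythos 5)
Your proposal is correct and follows exactly the route the paper intends: the paper's proof is just the remark ``straightforward, see Definition~\ref{defsequentsder} and Definition~\ref{transfer},'' which is precisely the rule-by-rule correspondence you spell out. Your explicit strengthening to include the substitution clause $(\Gamma\vdash s\tri\Delta)\boldsymbol{\Rightarrow} u$ is the right way to make the ``straightforward'' induction go through rules $(v)$, $(viii)$, and $(ix)$.
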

\begin{proof}The proof is straightforward, see Definition~\ref{defsequentsder}
and Definition~\ref{transfer}.
\end{proof}
\begin{definition}We  write
$(\Gamma\vdash M)\backsimeq(\Delta\vdash N)$
iff\\
$(\Gamma\vdash M)\boldsymbol{\Rightarrow} U$ and\\
$(\Delta\vdash N)\boldsymbol{\Rightarrow} U$, for some $U$.
\end{definition}
\begin{example}
$(x,y\vdash x)\backsimeq (x,y\vdash\pi_y\ci x)$
\end{example}
\begin{definition}A name-free term $U$ is called \emph{pure} if
it is constructed from the terms $\underline{n}$ by using
application and abstraction.
\end{definition}
\begin{lemma}\label{purelemma}If $(\Gamma\vdash M)\boldsymbol{\Rightarrow} U$ and $
M$ is pure, then $U$ is pure. If  $U$ is pure, then $U$ is a
$\sigma$-normal form.
\end{lemma}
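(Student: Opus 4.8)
The plan is to prove the two implications separately, each by a short structural induction. For the first implication I would induct on the structure of the pure term $M$, equivalently on the derivation of $(\Gamma\vdash M)\boldsymbol{\Rightarrow} U$ in Definition~\ref{transfer}. Since $M$ is pure it has no sub-term of the form $s\ci N$, so clause $(v)$ is never the last rule applied, and the outermost constructor of $M$ determines which of $(i)$--$(iv)$ is used. If $M$ is a variable, only $(i)$ and $(ii)$ apply, and repeated use of them yields $U\equiv\underline{n}$ for some $n\geqslant 1$, which is pure by definition. If $M\equiv M_1M_2$, then $M_1,M_2$ are pure and clause $(iii)$ gives $U\equiv U_1U_2$ with $U_1,U_2$ pure by the induction hypothesis, so $U$ is pure. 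If $M\equiv\lambda\x.N$, then $N$ is pure and clause $(iv)$ gives $U\equiv\lambda U_1$ with $U_1$ pure by the induction hypothesis, so $U$ is pure.

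For the second implication the organising observation is that the left-hand side of every rule of $\sigma$ (that is, $\lambda\sigma$ without $Beta$: namely $Abs,App,ConsVar,IdVar,Clos,Ass,IdR,IdShift,ConsShift,Map$) has a closure $u\ci X$ or a composition $u\ci v$ at its root; in particular neither an application $U_1U_2$ nor an abstraction $\lambda U_1$ is itself a $\sigma$-redex. First I would record that each $\underline{n}$ is a $\sigma$-normal form: $\underline{1}\equiv 1$ admits no redex, and for $n\geqslant 1$ the identity $\underline{n+1}\equiv\sigma(\pi\ci\underline{n})$ exhibits $\underline{n+1}$ as a value of $\sigma(\cdot)$, hence as a $\sigma$-normal form. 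Then I would induct on the way $U$ is built from the $\underline{n}$ by application and abstraction: the base case $U\equiv\underline{n}$ is normal by the preceding remark, and in the cases $U\equiv U_1U_2$ and $U\equiv\lambda U_1$ the immediate sub-terms are normal by the induction hypothesis while the root is not a redex by the observation above, so no redex occurs anywhere in $U$.

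The argument is essentially routine; the only step needing care is the purely syntactic observation that in $\sigma$ every redex is rooted at a closure or a composition. This is what forbids a redex from straddling the boundary between a freshly formed application or abstraction and its already-normal sub-terms, and it is also the reason the statement must concern the $\sigma$-normal form rather than the $\lambda\sigma$-normal form, since the omitted rule $Beta$ does have an application at its root. Granting this observation, both inductions close without any further computation.
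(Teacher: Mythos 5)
Your proof is correct and takes essentially the same approach as the paper, whose entire proof of this lemma is the one-line remark that each pure term $M$ is constructed from variables by using application and abstraction; your two structural inductions, the observation that every $\sigma$-redex is rooted at a closure $u\ci U$ or a composition $u\ci v$, and the check that each $\underline{n}$ is a $\sigma$-normal form are precisely the details that remark leaves implicit. Nothing is missing.
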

\begin{proof}Each pure term $M$ is constructed from variables by using application and abstraction.
\end{proof}
\begin{definition}By definition, put\\
$\Uparrow(s)\equiv\la\pi\ci s\mi 1\ra$ \\
$\Uparrow^n\!(s)\equiv\underbrace{\Uparrow(\ldots(\Uparrow}_{\mbox{\scriptsize
$n$ times}}(s))\ldots) $
\end{definition}
\begin{lemma}\label{lemmauparrow}For any nameless term $U$,\\
 $\sigma(\la\pi\mi 1\ra\ci
U)\equiv\sigma(U)$ and\\
$\sigma(\Uparrow^n\!\la\pi\mi 1\ra\ci U)\equiv\sigma(U)$
\end{lemma}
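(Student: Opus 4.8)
The plan is to prove the single statement that $\sigma(\Uparrow^n\!\la\pi\mi 1\ra\ci U)\equiv\sigma(U)$ for every $n\geq 0$ and every name-free term $U$; the two displayed equalities are then the instances $n=0$ and $n\geq 1$. Throughout write $s_n$ for $\Uparrow^n\!\la\pi\mi 1\ra$, so that $s_0\equiv\la\pi\mi 1\ra$ and $s_{n+1}\equiv\Uparrow(s_n)\equiv\la\pi\ci s_n\mi 1\ra$. First I would record the standard consequences of confluence and strong normalization of $\sigma$: from $B\twoh\sigma(B)$ and compatibility one obtains $\sigma(A\ci B)\equiv\sigma(A\ci\sigma(B))$, and, since no $\sigma$-rule acts at the root of an application or of an abstraction, $\sigma(UV)\equiv\sigma(U)\sigma(V)$ and $\sigma(\lambda U)\equiv\lambda\sigma(U)$. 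Using $\sigma(s_n\ci U)\equiv\sigma(s_n\ci\sigma(U))$ it suffices to treat a $\sigma$-normal form $U$, so that the goal becomes $\sigma(s_n\ci U)\equiv U$.

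Next I would argue by induction on the structure of the normal form $U$, keeping $n$ universally quantified. If $U\equiv VW$, then \emph{App} gives $s_n\ci VW\to(s_n\ci V)(s_n\ci W)$ and the induction hypothesis for $V$ and $W$ at the same index $n$ closes the case. If $U\equiv\lambda V$, then \emph{Abs} gives $s_n\ci\lambda V\to\lambda\,\Uparrow(s_n)\ci V\equiv\lambda\,s_{n+1}\ci V$, and the induction hypothesis for $V$ at index $n+1$ closes it; it is precisely the universal quantification over $n$ that licenses this step. Finally, a normal term that is neither an application nor an abstraction must be a variable $\underline m$ (any other closure $u\ci W$ would admit one of \emph{App}, \emph{Abs}, \emph{Clos}, \emph{IdVar}, \emph{ConsVar}), so the last case is $U\equiv\underline m$, and it remains to establish $\sigma(s_n\ci\underline m)\equiv\underline m$ for all $n\geq 0$ and $m\geq 1$.

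This base case is the core of the argument, and I would handle it by a secondary induction on $n$ with an inner induction on $m$. Since each $s_n$ is a Cons, \emph{ConsVar} gives $s_n\ci\underline 1\equiv s_n\ci 1\to 1$, settling $m=1$. For $\underline{m+1}$ I would use the identity $\underline{m+1}\equiv\sigma(\pi\ci\underline m)$ to rewrite $\sigma(s_n\ci\underline{m+1})\equiv\sigma(s_n\ci\pi\ci\underline m)$ and then \emph{Clos} to reach $\sigma((s_n\ci\pi)\ci\underline m)$. Here \emph{ConsShift} is decisive: $s_0\ci\pi\to\pi$, whereas $s_{n+1}\ci\pi\equiv\la\pi\ci s_n\mi 1\ra\ci\pi\to\pi\ci s_n$. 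For $n=0$ this yields $\sigma(\pi\ci\underline m)\equiv\underline{m+1}$ directly; for $n+1$ it yields $\sigma(\pi\ci(s_n\ci\underline m))\equiv\sigma(\pi\ci\sigma(s_n\ci\underline m))$, whereupon the outer induction hypothesis $\sigma(s_n\ci\underline m)\equiv\underline m$ reduces it to $\sigma(\pi\ci\underline m)\equiv\underline{m+1}$. This closes the base case, and with it the lemma.

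I expect this variable base case to be the main obstacle. The difficulty is bookkeeping: each passage under a binder replaces $s_n$ by $\Uparrow(s_n)\equiv s_{n+1}$, nesting the substitution one level deeper, while a de Bruijn variable $\underline m$ must count down through these nested shifts; a single induction therefore does not close, and the double induction on $(n,m)$, driven by the repeated use of \emph{ConsShift} and the identity $\underline{m+1}\equiv\sigma(\pi\ci\underline m)$, is what keeps the indices aligned. The application and abstraction cases, by contrast, are routine once $n$ is left free.
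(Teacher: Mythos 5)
Your proof is correct, but it does not follow the paper's route, because the paper has no internal proof at all: its entire argument for this lemma is the citation ``See~\cite{Abadi}, Lemma 3.6'', i.e.\ it treats the statement as the translation, into the present reversed notation, of a known fact about $\lambda\sigma$ (here $\la\pi\mi 1\ra$ plays the role of $1\cdot\uparrow$ and $\Uparrow$ of lifting). You instead reconstruct the lemma from scratch: reduce to $\sigma$-normal forms using confluence and strong normalization of $\sigma$, perform a structural induction on the normal form with the index $n$ of $s_n\equiv\,\Uparrow^n\!\la\pi\mi 1\ra$ universally quantified (exactly what is needed so that the $Abs$ case can trade $s_n$ for $s_{n+1}$ under the binder), and settle the variable case $\sigma(s_n\ci\underline{m})\equiv\underline{m}$ by induction on $(n,m)$ driven by $ConsVar$, $Clos$, $ConsShift$ and the identity $\underline{m+1}\equiv\sigma(\pi\ci\underline{m})$. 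Your version buys self-containedness and stays entirely within the paper's own calculus and notation; the paper's citation buys brevity but obliges the reader to translate notations and verify that the statements correspond. Two spots in your write-up deserve tightening, though neither is a genuine gap. First, the classification ``a normal term that is neither an application nor an abstraction is some $\underline{m}$'' needs more rules than the ones you list: one must also analyse normal \emph{substitutions} (using $Ass$, $IdR$, $IdShift$, $ConsShift$, $Map$) to see that the head $u$ of a normal closure $u\ci 1$ is forced to be a left-nested composition of $\pi$'s, whence $u\ci 1\equiv\underline{m}$. Second, in the variable case $ConsShift$ literally yields $(\pi\ci s_n)\ci\underline{m}$, not $\pi\ci(s_n\ci\underline{m})$; these have the same $\sigma$-normal form because $Clos$ rewrites the latter to the former, but that extra appeal to confluence should be made explicit.
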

\begin{proof}See~\cite{Abadi}, Lemma 3.6.
\end{proof}
\begin{definition}By $\overset{T}{\ri\ri}$ and
$\overset{T}{\twoh}$ denote (reflexive and transitive) reductions
in a calculus $T$ ($T$ may be
$\lambda\pi,\sigma\pi\alpha,\lambda\sigma$ or $\sigma$).
\end{definition}
\begin{theorem}\label{maintheorem}Suppose \\
$\begin{array}{l}
 \Gamma\vdash M\overset{\sigma\pi\alpha}{\ri\ri}\Sigma\vdash L;\\
 \Sigma\vdash L \text{ is a $\sigma\pi\alpha$-normal form (with respect to $\overset{\sigma\pi\alpha}{\ri\ri}$)};\\
 (\Gamma\vdash M) \boldsymbol{\Rightarrow}  U;\\
 (\Sigma\vdash L) \boldsymbol{\Rightarrow}  V;
\end{array}$\\[5pt]
then $V$ is  a $\sigma$-normal form and
$U\overset{\sigma}{\twoh}V$.
\end{theorem}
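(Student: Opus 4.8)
The plan is to reduce everything to a one-step \emph{simulation lemma} and to carry along the reduction, as the crucial invariant, the $\sigma$-normal form of the associated name-free term rather than a forward $\sigma$-reduction. The reason is that the $\alpha$-steps of $\sigma\pi\alpha$ may run \emph{against} $\sigma$: under $\boldsymbol{\Rightarrow}$ the rule $\alpha_1$ sends $\lambda U$ to $\lambda(\la\pi\mi 1\ra\ci U)$, and by Lemma~\ref{lemmauparrow} the latter has the \emph{same} $\sigma$-normal form as $\lambda U$, yet it does not $\sigma$-reduce to it. So I would establish, for a single step, only that $\sigma(U_1)\equiv\sigma(U_2)$, and recover the directed reduction of the statement at the very end, where the target is already a $\sigma$-normal form. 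First I note that $\boldsymbol{\Rightarrow}$ is a genuine function on derivable judgements: derivations are unique, so the image is well defined, and by Subject reduction (Theorem~\ref{SubRed1} together with its part two) every judgement occurring along $\Gamma\vdash M\overset{\sigma\pi\alpha}{\ri\ri}\Sigma\vdash L$ is derivable, so each has a well-defined image.

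The simulation lemma reads: if $\Gamma\vdash M_1\ri\Gamma'\vdash M_2$ is a single $\sigma\pi\alpha$-step with $(\Gamma\vdash M_1)\boldsymbol{\Rightarrow} U_1$ and $(\Gamma'\vdash M_2)\boldsymbol{\Rightarrow} U_2$, then $\sigma(U_1)\equiv\sigma(U_2)$ (and similarly for substitutions). I would prove it by induction on the compatible-closure derivation, i.e.\ on the position of the redex. The congruence cases are uniform: since $\boldsymbol{\Rightarrow}$ is defined compositionally over the constructors $UV$, $\lambda U$, $u\ci U$, $\la u\mi V\ra$ and $u\ci v$, and the ambient context is unchanged by a term- or substitution-level $\arrow$, reducing inside a subexpression presents the name-free side as $C[W_1]$ and $C[W_2]$ for a fixed name-free one-hole context $C$; the induction hypothesis gives $\sigma(W_1)\equiv\sigma(W_2)$, so $C[W_1]$ and $C[W_2]$ share the reduct $C[\sigma(W_1)]$ and hence, $\sigma$ being confluent, have the same normal form.

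For the base cases (redex at the root) there are three families. The \emph{shared} rules $Abs$, $App$, $ConsVar$, $IdVar$, $Clos$, $Ass$, $IdR$, $IdShift$, $ConsShift$, $Map$ map under $\boldsymbol{\Rightarrow}$ to the identically named rule of $\lambda\sigma$ applied to the images, giving a $\sigma$-reduction $U_1\overset{\sigma}{\twoh}U_2$ (a single step, except that $IdVar$ on a deep variable unfolds $id\ci\underline{n}\overset{\sigma}{\twoh}\underline{n}$), so the normal forms coincide. The purely named rules $New$, $\pi_1$, $\pi_2$ are handled by the arithmetic of the encoding $\underline{n+1}\equiv\sigma(\pi\ci\underline{n})$: for instance $\pi_1$ sends $\pi\ci\underline{n}$ to $\underline{n+1}$, while $New$ sends $\la u\mi V\ra\ci\underline{n+1}$ to $u\ci\underline{n}$, and replacing $\underline{n+1}$ by $\pi\ci\underline{n}$ (same normal form) lets $Clos$ and $ConsShift$ finish, confluence of $\sigma$ transporting the equality through the surrounding expression. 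Finally the $\alpha$-rules are where Lemma~\ref{lemmauparrow} does the work: $\alpha_1$ gives $U_1\equiv\lambda U$ and $U_2\equiv\lambda(\la\pi\mi 1\ra\ci U)$, and $\alpha_2$, whose right-hand side carries $\Uparrow_{\Delta}\!\la\pi_{\y}\mi\y\is\x\ra$, gives $U_2\equiv\Uparrow^{|\Delta|}\!\la\pi\mi 1\ra\ci U$ (the $U$ is unchanged because $\boldsymbol{\Rightarrow}$ ignores the names of variables); in both cases $\sigma(U_2)\equiv\sigma(U_1)$ by Lemma~\ref{lemmauparrow}. No side condition on the $\alpha$-rules is needed, exactly as in the proof of Theorem~\ref{SubRed1}.

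With the simulation lemma in hand, induction on the length of $\Gamma\vdash M\overset{\sigma\pi\alpha}{\ri\ri}\Sigma\vdash L$ yields $\sigma(U)\equiv\sigma(V)$. Since $\Sigma\vdash L$ is a $\sigma\pi\alpha$-normal form, Theorem~\ref{pure} makes $L$ pure, and then Lemma~\ref{purelemma} makes $V$ pure and a $\sigma$-normal form; hence $\sigma(V)\equiv V$, so $\sigma(U)\equiv V$, and by strong normalization of $\sigma$ we conclude $U\overset{\sigma}{\twoh}\sigma(U)\equiv V$. The main obstacle I expect is the $\alpha_2$ case: one must check that under $\boldsymbol{\Rightarrow}$ the context-relative operator $\Uparrow_{\Delta}$ corresponds precisely to $\Uparrow^{|\Delta|}$ on the name-free side, so that the $\Uparrow^{n}$ form of Lemma~\ref{lemmauparrow} applies, and that the image of $M$ really is insensitive to renaming the bound variable recorded in the context. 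This is the bookkeeping that makes the otherwise $\sigma$-irreversible $\alpha$-steps harmless, and it is the point that forces the normal-form invariant rather than a directed simulation.
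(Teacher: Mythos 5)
Your proposal is correct and follows essentially the same route as the paper: induction on the length of the reduction sequence carrying the invariant $\sigma(U)\equiv\sigma(W)$, same-name $\sigma$-step simulation for the shared rules, the encoding arithmetic $\underline{n+1}\equiv\sigma(\pi\ci\underline{n})$ for $New,\pi_1,\pi_2$, Lemma~\ref{lemmauparrow} for $\alpha_1,\alpha_2$, and then purity (Theorem~\ref{pure}, Lemma~\ref{purelemma}) together with strong normalization and confluence of $\sigma$ to turn the normal-form equality into $U\overset{\sigma}{\twoh}V$ at the very end. The differences are only presentational: you factor the argument through an explicit one-step simulation lemma and spell out the congruence-closure and $\alpha_2$ bookkeeping ($\Uparrow_{\Delta}$ mapping to $\Uparrow^{|\Delta|}$) that the paper leaves implicit.
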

\begin{proof}$\Gamma\vdash M$ is derivable by
Lemma~\ref{poleznaya}. $\Sigma\vdash L$ is derivable by Subject
reduction. Therefore $L$ is pure  and $V$ is a $\sigma$-normal
form (Theorem~\ref{pure}, Lemma~\ref{purelemma}). Why
$U\overset{\sigma}{\twoh}V$? It is sufficient to prove that
$\sigma(U)\equiv\sigma(V)$ ($U$ reduces to its $\sigma$-normal
form because $\sigma$ is strongly normalizing and confluent). The
proof is by induction over the length of the reduction sequence $
\Gamma\vdash M\overset{\sigma\pi\alpha}{\ri\ri}\Sigma\vdash L$. If
this length is equal to $0$, there is nothing to prove. Otherwise,
suppose this sequence has the form
\\
$\Gamma\vdash M\ri\ldots\ri\Delta\vdash N\ri\Sigma\vdash L$,\\
where
 $(\Delta\vdash N)\boldsymbol{\Rightarrow}W$ and $\sigma(U)\equiv\sigma(W)$.\\
   Any possible $\sigma\pi\alpha$-reduction  step $\Delta\vdash N\ri\Sigma\vdash L$,
except  $New,\pi_1,\pi_2,\alpha_1$, and $\alpha_2$ corresponds to
the same name $\sigma$-reduction step of the  nameless terms
$W\arrow V$, hence $\sigma(W)\equiv\sigma(V)$ in these cases. For
$\alpha_1$ and $\alpha_2$ use Lemma~\ref{lemmauparrow}.\\
(\textbf{Case} $\boldsymbol{\pi_1}.) \quad \pi_{\x}\ci \y\arrow \y
\qquad(\x\neq \y)$
$$\ruletwo{\Gamma,\x\vdash
\pi_{\x}\tri\Gamma}{\ruledot{\Gamma\vdash
\y}}{\Gamma,\x\vdash\pi_{\x}\ci \y}\qquad
\ruleone{\ruledot{\Gamma\vdash \y}}{\Gamma,\x\vdash \y}$$
Suppose \\
 $(\Gamma\vdash \y)\boldsymbol{\Rightarrow}\underline{n}$;\\
  then\\
$(\Gamma,\x\vdash\pi_{\x}\ci
\y)\boldsymbol{\Rightarrow}\pi\ci\underline{n}$ and\\
$(\Gamma,\x\vdash \y)\boldsymbol{\Rightarrow}\underline{n+1}$.\\
We see that
$\sigma(\pi\ci\underline{n})\equiv\sigma(\underline{n+1})$.
\\[5pt]
(\textbf{Case} $\boldsymbol{\pi_2}.) \quad (s\ci\pi_{\x})\ci
\y\arrow s\ci \y \qquad(\x\neq \y)$
$$\ruletwo{\ruletwo{\ruledot{\Delta\vdash
s\tri\Gamma,\x}}{\Gamma,\x\vdash\pi_{\x}\tri\Gamma}{\Delta\vdash
s\ci\pi_{\x}\tri\Gamma}}{\ruledot{\Gamma\vdash \y}}{\Delta\vdash
(s\ci\pi_{\x})\ci \y}\qquad \ruletwo{\ruledot{\Delta\vdash
s\tri\Gamma,\x}}{\ruleone{\ruledot{\Gamma\vdash
\y}}{\Gamma,\x\vdash \y}}{\Delta\vdash s\ci \y}$$
Suppose \\
$(\Delta\vdash s\tri\Gamma,\x)\boldsymbol{\Rightarrow}
u$;\\
 $(\Gamma\vdash
\y)\boldsymbol{\Rightarrow}\underline{n}$;\\
then\\
$(\Delta\vdash (s\ci\pi_{\x})\ci \y)\boldsymbol{\Rightarrow} (u\ci\pi)\ci\underline{n}$;\\
$(\Gamma,\x\vdash \y)\boldsymbol{\Rightarrow} \underline{n+1}$;\\
$(\Delta\vdash s\ci \y)\boldsymbol{\Rightarrow}
u\ci\underline{n+1}$.\\
We see that\\
$\sigma((u\ci\pi)\ci\underline{n})\equiv\sigma(u\ci\underline{n+1})$\\[5pt]
(\textbf{Case} ${\mathbf{New}}.) \quad \la s\mi N\is \x\ra\ci
\y\arrow s\ci \y \qquad(\x\neq \y)$
$$\ruletwo{\ruletwo{\ruledot{\Delta\vdash
s\tri\Gamma}}{\ruledot{\Delta\vdash N}}{\Delta\vdash\la s\mi N\is
\x\ra\tri\Gamma,\x}}{\ruleone{\ruledot{\Gamma\vdash
\y}}{\Gamma,\x\vdash \y}}{\Delta\vdash\la s\mi N\is \x\ra\ci
\y}\qquad \ruletwo{\ruledot{\Delta\vdash
s\tri\Gamma}}{\ruledot{\Gamma\vdash \y}}{\Delta\vdash s\ci \y}$$
Suppose \\
$(\Delta\vdash s\tri\Gamma)\boldsymbol{\Rightarrow}
u$;\\
$(\Delta\vdash N)\boldsymbol{\Rightarrow} V$;\\
$(\Gamma\vdash \y)\boldsymbol{\Rightarrow}\underline{n}$;\\
then\\
$(\Gamma,\x\vdash \y)\boldsymbol{\Rightarrow}\underline{n+1}$;\\
$(\Delta\vdash\la s\mi N\is \x\ra\ci
\y)\boldsymbol{\Rightarrow}\la
u\mi V\ra\ci\underline{n+1}$;\\
$(\Delta\vdash s\ci \y)\boldsymbol{\Rightarrow}
u\ci\underline{n}$.\\
We see that\\
$\sigma(\la u\mi V\ra\ci\underline{n+1})\equiv\sigma(\la u\mi
V\ra\ci\pi\ci\underline{n})\equiv\sigma(u\ci\underline{n})$.
\end{proof}
\begin{theorem}\label{SN}$\sigma\pi\alpha$ is strongly
normalizing (on the sets of terms, substitutions, and judgements
of the form $\Gamma\vdash M$).
\end{theorem}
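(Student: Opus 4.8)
The plan is to split $\sigma\pi\alpha$ into its $\alpha$-free part and the two renaming rules, and to control the two parts by a lexicographic pair $(\mu,\ell)$ of well-founded measures.

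First I would show that the fragment $\sigma\pi$ --- that is, $\sigma\pi\alpha$ with $\alpha_1$ and $\alpha_2$ removed --- is strongly normalising, using the nameless translation of Definition~\ref{transfer} as a projection. Since that translation is defined by recursion on the (unique) derivation of a judgement, it is a homomorphism for the term and substitution constructors and therefore commutes with the compatible closure. Re-reading the proof of Theorem~\ref{maintheorem} rule by rule, every $\sigma\pi$-step $\Gamma\vdash N\ri\Sigma\vdash L$ projects to a \emph{strictly positive} number of $\sigma$-steps on the associated nameless terms $W$ and $V$: the rules $Abs,App,ConsVar,IdVar,Clos,Ass,IdR,IdShift,ConsShift,Map$ each give one identically named $\sigma$-step, while $\pi_1,\pi_2,New$ give $W\twoh V$ with at least one step (for instance $\pi\ci\underline{n}\twoh\underline{n+1}$, $(u\ci\pi)\ci\underline{n}\twoh u\ci\underline{n+1}$, and $\la u\mi V\ra\ci\underline{n+1}\twoh u\ci\underline{n}$). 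Hence an infinite $\sigma\pi$-reduction would project to an infinite $\sigma$-reduction, which is impossible because $\sigma$ is strongly normalising \cite{Abadi}. Writing $\ell(\Gamma\vdash M)$ for the length of the longest $\sigma\pi$-reduction issuing from $\Gamma\vdash M$, the quantity $\ell$ is finite and drops strictly at every $\sigma\pi$-step.

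It then remains to attach a measure $\mu$ that is \emph{not increased} by any $\sigma\pi$-step and \emph{strictly decreased} by each $\alpha_1$- and $\alpha_2$-step; ordering judgements by $(\mu,\ell)$ lexicographically makes every $\sigma\pi\alpha$-step strictly decreasing (an $\alpha$-step lowers $\mu$; a $\sigma\pi$-step keeps $\mu$ and lowers $\ell$), which is the theorem. The natural candidate for $\mu$ is a renaming debt: the number of blocks $(\pi_{\x_1\ldots\x_m\y})\ci\y$ --- equivalently the guarded, level $\geqslant 2$, own-variable occurrences --- present in the $\sigma\pi$-normal form of the judgement. Because a $\sigma\pi$-step does not change that normal form, $\mu$ is $\sigma\pi$-invariant; and an $\alpha$-step renames one offending binder to a fresh variable, so by the analysis behind Theorem~\ref{pure} the corresponding block can be eliminated while no new block is created, and $\mu$ strictly decreases. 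Well-definedness of $\mu$ uses strong normalisation of $\sigma\pi$ from the first step together with confluence of the $\alpha$-free fragment (from the critical-pair analysis underlying Section~9).

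The hard part is precisely the construction and verification of $\mu$. Two features of the calculus fight against any simpler choice: the sets $FV_i$ are \emph{reduction-invariant} (this is the content of the corollaries after Definition~\ref{FV} and of Lemmas~\ref{corollary23} and~\ref{corollary3}), so $\mu$ cannot be read off from the level data of the current term; and $\alpha$ is invisible to the nameless projection (Lemma~\ref{lemmauparrow}), so $\mu$ must exploit genuinely named information, namely the freshness of the variable introduced by $\alpha_1$ and $\alpha_2$. Passing to the $\sigma\pi$-normal form reconciles these demands, but the price is to prove that $\alpha$ makes strict progress toward purity on the normal form while the remaining blocks are preserved --- in the face of the duplicating rules $App$ and $Map$, which copy a substitution $s$ together with any debt carried by the terms inside it, and of $Abs$, which both manufactures a fresh $\pi_{\x}$ and pushes $s$ under a binder. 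Discharging this bookkeeping, i.e.\ showing that the normal-form debt is unaffected by duplication and by $Abs$ and falls by one at each renaming, is where the real work lies.
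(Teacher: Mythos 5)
Your lexicographic plan founders at both of its load-bearing steps. First, the projection argument for strong normalization of $\sigma\pi$ is unsound: the translation of Definition~\ref{transfer} does \emph{not} simulate every $\sigma\pi$-step by a nonempty $\sigma$-reduction. For $\pi_1$, if $(\Gamma\vdash \y)\boldsymbol{\Rightarrow}\underline{1}$, then the redex $\Gamma,\x\vdash\pi_{\x}\ci \y$ and its contractum $\Gamma,\x\vdash \y$ project to the \emph{same} nameless term, since $\pi\ci\underline{1}\equiv\underline{2}$; this step projects to zero $\sigma$-steps. Worse, for $\pi_2$ the two projections are not even related by $\overset{\sigma}{\twoh}$ in the direction you need: the redex projects to $(u\ci\pi)\ci\underline{n}$, the contractum to $u\ci\underline{n+1}$, and for $n=1$ it is $u\ci(\pi\ci 1)$ that $Clos$-reduces to $(u\ci\pi)\ci 1$, not the other way round (similarly for $New$ as soon as $n\geqslant 2$). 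What Theorem~\ref{maintheorem} actually proves is only $\sigma(W)\equiv\sigma(V)$, i.e.\ equality of $\sigma$-normal forms, which is perfectly compatible with infinitely many $\sigma\pi$-steps projecting to no $\sigma$-step at all. So an infinite $\sigma\pi$-sequence need not yield an infinite $\sigma$-sequence, and the finiteness of your $\ell$ is not established.

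Second, even granting $\ell$, your primary measure $\mu$ is not strictly decreased by $\alpha$-steps, because $\alpha_1$ can fire inside material that $\sigma\pi$-normalization erases. Take $x,y,z$ pairwise distinct and consider the term $\la id\mi(\lambda x.\pi_x\ci x)\is z\ra\ci(yy)$, well-formed in the context $x,y$. Its $\sigma\pi$-normal form is $yy$ (by $App$, then $New$ --- which throws the entry carrying the block away --- then $IdVar$), so $\mu=0$. The side condition of $\alpha_1$ holds for the subterm $\lambda x.\pi_x\ci x$, since $FV(\lambda x.\pi_x\ci x)=\la\{x\},\emptyset,\emptyset,\ldots\ra$; after renaming, the resulting larger term still has $\sigma\pi$-normal form $yy$, so $\mu=0$ again while $\ell$ has grown. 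Hence $(\mu,\ell)$ does not drop and the lexicographic argument collapses. (There is also a circularity lurking in the well-definedness of $\mu$: unique $\sigma\pi$-normal forms via Newman's lemma would presuppose the very termination you are proving, and the paper contains no critical-pair analysis to substitute for it.) The paper's actual proof avoids projections and normal forms altogether: binders $\lambda\x$ with $\x$ occurring free are marked as $\boldsymbol{\lambda}\x$, every $\sigma\pi\alpha$-step --- including $\alpha_1$, $\alpha_2$, and steps inside garbage --- is simulated in the marked rewriting system $R$ (Lemma~\ref{subfreevar} controls how markings can disappear), and $R$ is proved terminating by semantic labelling and a lexicographic path order (Theorem~\ref{normalizing}).
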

\begin{proof}The proof is postponed until Section~\ref{section9}.
\end{proof}
\begin{definition}(One-step $\beta$-reduction on the set of pure name-free terms).\\[5pt]
\ruleone{U\arrow_{Beta}V}{U\arrow_{\beta}\sigma(V)
\using{\quad(U \text{ is pure})}}\\[5pt]
(the compatible closure, of course).
\end{definition}
\begin{lemma}\label{lemmapodjom}If $U\arrow_{Beta}V$, then
$\sigma(U)\arrow^*_{\beta}\sigma(V)$, where $\arrow^*_{\beta}$ is
the reflexive closure of $\,\arrow_{\beta}$.
\end{lemma}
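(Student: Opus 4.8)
The plan is to fix the single redex contracted in $U\arrow_{Beta}V$ and argue by induction on the position of that redex inside $U$, reducing everything to the case where the redex is at the head. Throughout I use that $\sigma$ is confluent and strongly normalizing, so $\sigma(\cdot)$ is a well-defined idempotent function satisfying $a\arrow_\sigma b\Rightarrow\sigma(a)\equiv\sigma(b)$, and that $\sigma$ commutes with the two constructors that carry no $\sigma$-redex at the root, namely $\sigma(PQ)\equiv\sigma(P)\sigma(Q)$ and $\sigma(\lambda P)\equiv\lambda\sigma(P)$. I would also first record the fact that every $\sigma$-normal form \emph{term} is pure, i.e.\ is built from the $\underline{n}$ by application and abstraction: a surviving closure $u\ci W$ in a normal form forces $W\equiv 1$ (else $App$, $Abs$ or $Clos$ apply) and forces $u$ to be a left-associated tower of $\pi$'s (else $IdVar$, $ConsVar$, $Map$ or $ConsShift$ apply), giving some $\underline{n}$. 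Consequently $\sigma(U)$ and $\sigma(V)$ are pure, so every $Beta$-redex occurring in them is a legal $\arrow_\beta$-redex (this uses the direction complementary to Lemma~\ref{purelemma}).

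\emph{Head case.} If the redex is at the head, $U\equiv(\lambda P)Q$ and $V\equiv\la id\mi Q\ra\ci P$. Since the application node is not a $\sigma$-redex, $\sigma(U)\equiv(\lambda\sigma(P))\,\sigma(Q)$, a $\beta$-redex with pure components. Contracting it by the base rule of $\arrow_\beta$ produces $\sigma(\la id\mi\sigma(Q)\ra\ci\sigma(P))$, and by congruence of $\sigma$ this equals $\sigma(\la id\mi Q\ra\ci P)\equiv\sigma(V)$; hence $\sigma(U)\arrow_\beta\sigma(V)$ in one step. \emph{Application and abstraction cases.} If the redex lies in a subterm whose surrounding nodes are only applications and abstractions, then $\sigma$ distributes over this pure context, and the compatible closure of $\arrow_\beta$ together with the induction hypothesis gives the claim at once (if the hypothesis yields an identity we simply land in the reflexive case).

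The real work is the \textbf{closure constructor} $U\equiv u\ci W$ (and, inside it, the substitution constructors $\la u\mi V\ra$ and $u\ci v$), with the redex sitting in $W$ or in $u$. Here $\sigma$ does \emph{not} distribute over the node: normalizing $u\ci W$ pushes $u$ through $W$ via $App$, $Abs$ and $Map$, and because $\la u'\mi V'\ra\ci 1\to V'$ combined with $App$ copies the argument, a single $Beta$-redex inside $u$ or $W$ can be duplicated into several $\beta$-redexes in $\sigma(U)$. This duplication is exactly why the target is the closure $\arrow^*_\beta$ (a sequence of $\beta$-steps: a single step when $U$ is pure, since then no copying substitution surrounds the redex, but possibly several in general). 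I would isolate this in an auxiliary lemma proved by induction on $W$: for every substitution $u$, $W\arrow_{Beta}W'$ implies $\sigma(u\ci W)\arrow^*_\beta\sigma(u\ci W')$, and symmetrically $u\arrow_{Beta}u'$ implies $\sigma(u\ci W)\arrow^*_\beta\sigma(u'\ci W)$. The delicate subcase is the binder $W\equiv\lambda W_0$, where $\sigma(u\ci\lambda W_0)\equiv\lambda\,\sigma(\la\pi\ci u\mi 1\ra\ci W_0)$, i.e.\ the substitution crossing the $\lambda$ is replaced by $\Uparrow(u)$; the shift bookkeeping created by these crossings is precisely what Lemma~\ref{lemmauparrow} is there to absorb.

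The main obstacle is therefore this closure case: once a $Beta$-contraction can be duplicated by the enclosing substitution, the clean ``one redex maps to one redex'' picture fails and one must keep track of all copies and of the shifts accumulated each time the substitution passes under a $\lambda$ (controlled by $\Uparrow$ and Lemma~\ref{lemmauparrow}). With the auxiliary substitution lemma in hand, the head case and the application/abstraction cases close the outer induction and establish $\sigma(U)\arrow^*_\beta\sigma(V)$.
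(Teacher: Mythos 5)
Your proposal does not prove the lemma as stated, and the obstruction is one you yourself point out. The statement defines $\arrow^*_\beta$ to be the \emph{reflexive} closure of $\arrow_\beta$, i.e.\ it claims that a single $Beta$-step projects to \emph{zero or one} $\beta$-step between $\sigma$-normal forms. Your closure-constructor analysis argues that a redex sitting inside a cons substitution can be duplicated and therefore the target must allow ``possibly several'' $\beta$-steps --- in other words, you silently replace the reflexive closure by the reflexive-transitive closure. These are not interchangeable here. Concretely, take $R\equiv(\lambda 1)1$ and $U\equiv\la id\mi R\ra\ci(1\,1)$; contracting $R$ gives $U\arrow_{Beta}V\equiv\la id\mi \la id\mi 1\ra\ci 1\ra\ci(1\,1)$, while $\sigma(U)\equiv((\lambda 1)1)((\lambda 1)1)$ and $\sigma(V)\equiv 1\,1$. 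One $\beta$-step from $\sigma(U)$ yields $1\,((\lambda 1)1)$ or $((\lambda 1)1)\,1$, never $1\,1$, so two steps are unavoidable. Thus your argument establishes the multi-step projection statement and at the same time \emph{refutes} the literal statement; you never flag this contradiction, and no amount of filling in your auxiliary closure-case lemma can close that gap, because the one-step claim is simply false for terms carrying $Beta$-redexes inside substitutions (which do arise from translated judgements, e.g.\ after an outer $Beta$-step).

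For comparison: the paper does not prove this lemma at all --- its entire proof is the citation ``See \cite{Abadi}, Lemma 3.5'' --- so any self-contained argument is by definition a different route, and the skeleton you propose (every $\sigma$-normal term is pure; head case; congruence cases over pure contexts; an auxiliary lemma for $u\ci W$ with Lemma~\ref{lemmauparrow} absorbing the shifts created when the substitution crosses a $\lambda$) is the standard and workable one for the \emph{true} statement, in which $\arrow^*_\beta$ is the reflexive-transitive closure. The most valuable outcome of your analysis would have been to report the defect explicitly: the lemma should be restated with the reflexive-transitive closure, and the repair propagates, since the proof of Theorem~\ref{theorem44} invokes exactly the at-most-one-step form (``$\sigma(W)\arrow_{Beta}W'\overset{\sigma}{\twoh}\sigma(V)$'') and would need an additional induction on the number of projected $\beta$-steps once duplication is admitted.
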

\begin{proof}See~\cite{Abadi}, Lemma 3.5.
\end{proof}
\begin{theorem}\label{theorem33}Suppose \\
$\begin{array}{l}
 \Gamma\vdash M\overset{\lambda\pi}{\ri\ri}\Sigma\vdash
L;\\
 (\Gamma\vdash M)\boldsymbol{\Rightarrow} U;\\
 (\Sigma\vdash L)\boldsymbol{\Rightarrow} V;
\end{array}$\\[5pt]
then $\sigma(U)\overset{\lambda\sigma}{\twoh}\sigma(V)$.
\end{theorem}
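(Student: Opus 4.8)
The argument is parallel to that of Theorem~\ref{maintheorem}: the behaviour of the non-$Beta$ rules is already settled there, and the only genuinely new point is the contribution of the $Beta$ steps, which $\sigma\pi\alpha$ lacks. First observe that $\Gamma\vdash M$ is derivable by Lemma~\ref{poleznaya}, hence by Subject reduction every judgement occurring in the given sequence $\Gamma\vdash M\overset{\lambda\pi}{\ri\ri}\Sigma\vdash L$ is derivable; since the clauses of Definition~\ref{transfer} follow the inference rules of Definition~\ref{defsequentsder}, each of these judgements has a unique name-free correspondent under $\boldsymbol{\Rightarrow}$. The plan is to prove, by induction on the length of the reduction sequence, that $\sigma(U)\overset{\lambda\sigma}{\twoh}\sigma(V)$.

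If the length is $0$ then $U\equiv V$ and the claim is trivial. Otherwise write the sequence as $\Gamma\vdash M\ri\ldots\ri\Delta\vdash N\ri\Sigma\vdash L$, put $(\Delta\vdash N)\boldsymbol{\Rightarrow}W$, and assume inductively that $\sigma(U)\overset{\lambda\sigma}{\twoh}\sigma(W)$. It then suffices to prove $\sigma(W)\overset{\lambda\sigma}{\twoh}\sigma(V)$ for the final step and to compose the two reductions by transitivity. If that step fires one of the $\sigma\pi\alpha$ rules, the computations performed in the proof of Theorem~\ref{maintheorem} show that the correspondents have equal $\sigma$-normal forms, $\sigma(W)\equiv\sigma(V)$, so the required reduction is empty.

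There remains the case in which the final step is $Beta$. Here the context is unchanged, and the step contracts a redex $(\lambda \x.M')N'$ occurring inside $N$, in the local variable context $\Gamma'$ in force at that position, replacing it by $\la id\mi N'\is \x\ra\ci M'$ while leaving the surrounding term intact. Because $\boldsymbol{\Rightarrow}$ is defined compositionally, the correspondent of the redex is $(\lambda U')V'$, where $(\Gamma',\x\vdash M')\boldsymbol{\Rightarrow}U'$ and $(\Gamma'\vdash N')\boldsymbol{\Rightarrow}V'$, and the correspondent of its contractum is $\la id\mi V'\ra\ci U'$ (apply the clauses of Definition~\ref{transfer} for $\la id\mi N'\is \x\ra\tri\Gamma',\x$ and then for the closure). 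Since the surrounding structure is untouched, at the level of name-free terms the step is exactly the compatibly closed $Beta$ contraction $W\arrow_{Beta}V$.

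Now Lemma~\ref{lemmapodjom} yields $\sigma(W)\arrow^*_{\beta}\sigma(V)$. A single $\arrow_{\beta}$ step unfolds, by its definition, into one name-free $Beta$ contraction followed by $\sigma$-normalization, and all of these are reductions of $\lambda\sigma$; hence $\arrow^*_{\beta}\ \subseteq\ \overset{\lambda\sigma}{\twoh}$, so $\sigma(W)\overset{\lambda\sigma}{\twoh}\sigma(V)$. Combining with the induction hypothesis gives $\sigma(U)\overset{\lambda\sigma}{\twoh}\sigma(V)$, as desired. The main obstacle is precisely this $Beta$ case: one must check that the compatible closure of $\lambda\pi$ is faithfully transported by $\boldsymbol{\Rightarrow}$ onto a compatibly closed name-free $Beta$ step, so that Lemma~\ref{lemmapodjom} is applicable, and that each $\arrow_{\beta}$ step decomposes into genuine $\lambda\sigma$ reductions; the treatment of all the remaining rules is inherited from Theorem~\ref{maintheorem}.
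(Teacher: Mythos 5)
Your proof is correct and follows the same skeleton as the paper's: induction on the length of the reduction sequence, with the final step split into a $\sigma\pi\alpha$ case and a $Beta$ case, the latter handled by transporting the redex through $\boldsymbol{\Rightarrow}$ and applying Lemma~\ref{lemmapodjom}. The one place where you genuinely diverge is the $\sigma\pi\alpha$ case. The paper does not appeal to the internal computations of Theorem~\ref{maintheorem}; it uses the \emph{statement} of that theorem: by Subject reduction the intermediate judgements are derivable, by Theorem~\ref{SN} one may take a $\sigma\pi\alpha$-normal form $\Psi\vdash P$ of $\Sigma\vdash L$, and applying Theorem~\ref{maintheorem} to $\Delta\vdash N\overset{\sigma\pi\alpha}{\ri\ri}\Psi\vdash P$ and to $\Sigma\vdash L\overset{\sigma\pi\alpha}{\ri\ri}\Psi\vdash P$ gives $\sigma(W)\equiv\sigma(V)$. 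You instead reuse the one-step fact established \emph{inside} the proof of Theorem~\ref{maintheorem} (a single $\sigma\pi\alpha$ step between judgements does not change the $\sigma$-normal form of the name-free correspondent). Your route is somewhat more economical, since it needs no appeal to strong normalization (Theorem~\ref{SN}, whose proof is postponed to Section~\ref{section9}); the price is that you lean on the proof of Theorem~\ref{maintheorem} rather than its statement, so for full rigor that one-step fact should be isolated as a lemma. A small bonus of your write-up is that you spell out the tacit final step $\arrow^*_{\beta}\subseteq\overset{\lambda\sigma}{\twoh}$, which the paper leaves to the reader.
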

\begin{proof}The proof is by induction over the length of the reduction sequence $\Gamma\vdash M\overset{\lambda\pi}{\ri\ri}\Sigma\vdash
L$.  If this length is equal to $0$, there is nothing to prove.
Otherwise, suppose this sequence has the form
\\
$\Gamma\vdash M\ri\ldots\ri\Delta\vdash N\ri\Sigma\vdash L$,\\
where
$(\Delta\vdash N)\boldsymbol{\Rightarrow}  W$ and  $\sigma(U)\overset{\lambda\sigma}{\twoh}\sigma(W)$.\\
If the reduction step $\Delta\vdash N\ri\Sigma\vdash L$ belongs to
$\sigma\pi\alpha$, everything is all right, because
$\sigma(W)\equiv\sigma(V)$ in this case. Indeed, $\Gamma\vdash M$
is derivable by Lemma~\ref{poleznaya}. $\Delta\vdash N$ and
$\Sigma\vdash L$ are derivable by Subject
reduction. Take any $\sigma\pi\alpha$-normal form of $\Sigma\vdash L$ (this normal form exists by Theorem ~\ref{SN} and are derivable too) and use Theorem~\ref{maintheorem} to get $ W\overset{\sigma}{\twoh}\sigma(V)$.\\
If $\Delta\vdash N\ri_{Beta}\Sigma\vdash L$, then
$W\arrow_{Beta}V$, because any $Beta$-redex in $N$ corresponds to
some $Beta$-redex in $W$, hence
$\sigma(W)\arrow^*_{\beta}\sigma(V)$ by Lemma~\ref{lemmapodjom}.
\end{proof}
\begin{theorem}\label{theorem44}Suppose \\
$\begin{array}{l}
 (\Gamma\vdash M) \boldsymbol{\Rightarrow}  U;\\
 U\overset{\lambda\sigma}{\twoh}V;
\end{array}$\\
then there is $\Sigma\vdash L$ such that\\
$\begin{array}{l}
 \Gamma\vdash M\overset{\lambda\pi}{\ri\ri}\Sigma\vdash L;\\
 \Sigma\vdash L \text{ is a } \sigma\pi\alpha\text{-normal
form (with respect to $\overset{\sigma\pi\alpha}{\ri\ri}$)};\\
  (\Sigma\vdash
L)\boldsymbol{\Rightarrow}\sigma(V).
 \end{array}$
 \end{theorem}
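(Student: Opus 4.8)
The plan is to reduce the arbitrary $\lambda\sigma$-reduction $U\overset{\lambda\sigma}{\twoh}V$ to a \emph{pure} $\beta$-reduction between $\sigma$-normal forms, and then to simulate the latter inside $\lambda\pi$ by a single induction. First I would record that $U\overset{\lambda\sigma}{\twoh}V$ implies $\sigma(U)\twoh_\beta\sigma(V)$, where $\twoh_\beta$ denotes the reflexive--transitive closure of the pure one-step reduction $\arrow_\beta$. This follows by induction on the length of $U\overset{\lambda\sigma}{\twoh}V$: a $\sigma$-step leaves the $\sigma$-normal form unchanged (as $\sigma$ is confluent and strongly normalizing), while a $Beta$-step lifts to at most one $\arrow_\beta$-step by Lemma~\ref{lemmapodjom}. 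Here $\sigma(U),\sigma(V)$ are legitimate inputs for $\arrow_\beta$ because $\sigma$-normal terms are pure (the name-free analogue of Theorems~\ref{Subnf} and~\ref{pure}).

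Next I would pass to a pure representative of $\Gamma\vdash M$. Since $\Gamma\vdash M$ is derivable (Lemma~\ref{poleznaya}), Theorem~\ref{SN} gives a $\sigma\pi\alpha$-normal form $\Gamma_1\vdash M_1$ with $\Gamma\vdash M\overset{\sigma\pi\alpha}{\ri\ri}\Gamma_1\vdash M_1$ (hence also $\overset{\lambda\pi}{\ri\ri}$); by Theorem~\ref{maintheorem} we get $(\Gamma_1\vdash M_1)\boldsymbol{\Rightarrow}\sigma(U)$, and by Theorem~\ref{pure} and Lemma~\ref{purelemma} the term $M_1$ is pure and $\sigma(U)$ is a pure $\sigma$-normal form. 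It then suffices to prove the \emph{simulation lemma}: if $\Gamma\vdash M$ is a $\sigma\pi\alpha$-normal derivable judgement with $(\Gamma\vdash M)\boldsymbol{\Rightarrow} U$ and $U\twoh_\beta P$, then there is a $\sigma\pi\alpha$-normal $\Sigma\vdash L$ with $\Gamma\vdash M\overset{\lambda\pi}{\ri\ri}\Sigma\vdash L$ and $(\Sigma\vdash L)\boldsymbol{\Rightarrow} P$. Applying this lemma to $\Gamma_1\vdash M_1$ and to the reduction $\sigma(U)\twoh_\beta\sigma(V)$ from the first paragraph, and prepending the reduction $\Gamma\vdash M\overset{\lambda\pi}{\ri\ri}\Gamma_1\vdash M_1$, yields the theorem.

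I would prove the simulation lemma by induction on the length of $U\twoh_\beta P$. The length-zero case is immediate: take $\Sigma\vdash L:=\Gamma\vdash M$. For the inductive step write $U\twoh_\beta P_0\arrow_\beta P$; the induction hypothesis produces a $\sigma\pi\alpha$-normal $\Sigma_0\vdash L_0$ with $\Gamma\vdash M\overset{\lambda\pi}{\ri\ri}\Sigma_0\vdash L_0$ and $(\Sigma_0\vdash L_0)\boldsymbol{\Rightarrow} P_0$, where $L_0$ is pure (Theorem~\ref{pure}) and $P_0$ is pure (Lemma~\ref{purelemma}). The last step contracts one name-free $Beta$-redex, $P_0\arrow_{Beta}P'$ with $P=\sigma(P')$. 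Because $L_0$ is pure, the translation of Definition~\ref{transfer} preserves the term structure, so this redex sits over a genuine application $(\lambda x.M')N'$ at the matching position of $L_0$; contracting it gives $\Sigma_0\vdash L_0\ri_{Beta}\Sigma_0\vdash L'$ with $(\Sigma_0\vdash L')\boldsymbol{\Rightarrow} P'$. The term $L'$ need not be pure, so by Theorem~\ref{SN} I reduce $\Sigma_0\vdash L'\overset{\sigma\pi\alpha}{\ri\ri}\Sigma\vdash L$ to a normal form; by Theorem~\ref{maintheorem} its image is the $\sigma$-normal form of $P'$, that is $\sigma(P')=P$. Concatenating $\Gamma\vdash M\overset{\lambda\pi}{\ri\ri}\Sigma_0\vdash L_0\ri_{Beta}\Sigma_0\vdash L'\overset{\sigma\pi\alpha}{\ri\ri}\Sigma\vdash L$, all of which lies in $\lambda\pi$, completes the induction.

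The main obstacle is the commutation claim $(\Sigma_0\vdash L')\boldsymbol{\Rightarrow} P'$, i.e. that a name-free $Beta$-redex really corresponds to a named one and that contraction commutes with $\boldsymbol{\Rightarrow}$; this is the converse of the redex correspondence used in Theorem~\ref{theorem33}. Its soundness rests on the purity of $L_0$: there are no closures $s\ci N$ to hide structure, every application whose function part translates to an abstraction $\lambda Q$ is itself an abstraction $\lambda x.M'$, and the positions of $L_0$ and $P_0$ match bijectively. I would isolate this as a small compatibility sublemma---$\boldsymbol{\Rightarrow}$ is defined by structural recursion, so replacing a subterm by one with the same image preserves the image---together with the local computation that, when $(\Sigma',x\vdash M')\boldsymbol{\Rightarrow} Q$ and $(\Sigma'\vdash N')\boldsymbol{\Rightarrow} R$, rules $(v),(viii),(vi)$ of Definition~\ref{transfer} give $(\Sigma'\vdash\la id\mi N'\is x\ra\ci M')\boldsymbol{\Rightarrow}\la id\mi R\ra\ci Q$, which is exactly the name-free contract of $(\lambda Q)R$. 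Note that confluence of $\sigma\pi\alpha$ is never invoked---only strong normalization---since Theorem~\ref{maintheorem} determines the image of \emph{any} $\sigma\pi\alpha$-normal form reached.
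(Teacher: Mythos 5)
Your proof is correct and is essentially the paper's own argument: the same ingredients (Lemma~\ref{lemmapodjom} to turn $Beta$-steps into $\beta$-steps on $\sigma$-normal forms, purity of $\sigma\pi\alpha$-normal forms to lift each name-free $Beta$-redex to a named one, and Theorem~\ref{SN} together with Theorem~\ref{maintheorem} to re-normalize after each contraction) drive the same step-by-step simulation. The only difference is bookkeeping: you first collapse the whole $\lambda\sigma$-sequence into a $\beta$-sequence between $\sigma$-normal forms and then prove a separate simulation lemma by induction on that sequence, whereas the paper runs a single induction over the original $\lambda\sigma$-sequence, absorbing $\sigma$-steps and simulating $Beta$-steps as it goes.
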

\begin{proof}The proof is by induction over the length of the reduction
sequence $U\overset{\lambda\sigma}{\twoh}V$. \\[5pt]
Case 1: If this length is equal to $0$, take any
$\sigma\pi\alpha$-normal form of $\Gamma\vdash M$ as $\Sigma\vdash
L$ and use Theorem~\ref{maintheorem}.
\\[5pt]
 Case 2: Suppose this sequence has the form
 $U\arrow \ldots\arrow W\arrow V$ and
   the sequence
 $U\arrow \ldots\arrow W$ satisfies the statement of the theorem, i.e.:\\
 $\Gamma\vdash M
\overset{\lambda\pi}{\ri\ri}\Delta\vdash N$ for some $\Delta\vdash
N$, where $\Delta\vdash N$  is
 a
$\sigma\pi\alpha$-normal form  and $(\Delta\vdash
N)\boldsymbol{\Rightarrow}\sigma(W)$.\\[3pt]
 If the reduction step $W\arrow
V$ belongs to  $\sigma$, everything is all right,
 because
$\sigma(W)\equiv\sigma(V)$ in this case and we can use
$\Delta\vdash N$ as $\Sigma\vdash L$.
\\[3pt] If
$W\arrow_{Beta}V$, then $\sigma(W)\arrow^*_{\beta}\sigma(V)$ by
Lemma~\ref{lemmapodjom}. If $\sigma(W)$ coincides with
$\sigma(V)$, everything is all right. Otherwise, suppose
$\sigma(W)\arrow^*_{\beta}\sigma(V)$ has the form
$\sigma(W)\arrow_{Beta}W'\overset{\sigma}{\twoh}\sigma(V)$.
 Any  $Beta$-redex in
$\sigma(W)$
corresponds to some $Beta$-redex in $N$. Contracting this redex in $N$, we obtain\\
$\Delta\vdash N\ri_{Beta}\Delta\vdash N'$ and $(\Delta\vdash
N')\boldsymbol{\Rightarrow}W'$, for some $N'$. Take  any
$\sigma\pi\alpha$-normal form of $\Delta\vdash N'$ as
$\Sigma\vdash L$, then use Theorem~\ref{maintheorem} to obtain\\
$(\Sigma\vdash L)\boldsymbol{\Rightarrow}\sigma(V)$.
\end{proof}
\newpage

\section{$\alpha$-equivalence}
\begin{definition} Only in this section, we use the following notation: the symbols $U,V,W$ range over \emph{extended
name-free terms} and the symbols $u,v,w$ range over \emph{extended
name-free substitutions}. The sets of extended name-free terms and
extended name-free substitutions are defined inductively as
follows:
\begin{align*}
U,V::&= \underline{n} \mid UV \mid \lambda U \mid u\ci U \\
u,v::&= id \mid \pi \mid \langle u\mi V \rangle \mid u\ci v
\end{align*}
($n\in N, n\geqslant 1$)
\end{definition}
\begin{example}$\lambda\lambda\underline{2}(\pi\ci\underline{1})$
is an extended nameless term.
\end{example}
\begin{definition}An extended nameless term $U$ is called \emph{pure} iff it does
not contain sub-terms of the form $u\ci U$.
\end{definition}
It is clear that any pure term is constructed from the symbols
$\underline{n}$ by  using application and  abstraction.
\begin{definition}A \emph{name-free judgement} is an expression of the
form $m\vdash U$ or of the form $m\vdash u$, where $m\in
N,m\geqslant 0$.
\end{definition}
Informally, $m$ is ``the length of an invisible context".
\begin{definition} $(\Gamma\vdash
M)\Rightarrow(m\vdash U)$ is shorthand for ``the name-free
judgement $m\vdash U$
 corresponds to the  judgement $\Gamma\vdash M$." \\
$(\Gamma\vdash s\tri\Delta)\Rightarrow(m\vdash u)$ is shorthand
for ``the
 name-free judgement $m\vdash u$  corresponds to the
judgement $\Gamma\vdash s\tri\Delta$."
\end{definition}
\begin{definition}By $|\Gamma|$ denote the length of $\Gamma$.
\end{definition}
\begin{definition}\label{deftransfer}(The rules of correspondence between judgements and name-free judgements).\\
$
\begin{array}{ll}
(i)& (\Gamma,\x\vdash \x)\Rightarrow  (|\Gamma,\x|\vdash \underline{1}) \\[5pt]
 (ii)& \ruleone{(\Gamma\vdash
\x)\Rightarrow  (|\Gamma| \vdash \underline{n})}{(\Gamma,\y\vdash \x)\Rightarrow  (|\Gamma,\y| \vdash \underline{n+1})\using {\quad(\x\neq \y)}}\\[20pt]
 (iii)& \ruletwo{(\Gamma\vdash M)\Rightarrow  (|\Gamma| \vdash U)}{(\Gamma\vdash N)\Rightarrow  (|\Gamma| \vdash V)}{(\Gamma\vdash
MN)\Rightarrow  (|\Gamma|  \vdash UV)}\\[15pt]
 (iv)& \ruleone{(\Gamma,\x\vdash M)\Rightarrow  (|\Gamma,\x| \vdash U)}{(\Gamma\vdash\lambda \x.M)\Rightarrow  (|\Gamma| \vdash \lambda U) }\\[15pt]
 (v)& \ruletwo{(\Gamma\vdash s \tri\Delta)\Rightarrow  (|\Gamma| \vdash u) }{(\Delta\vdash M)\Rightarrow  (|\Delta| \vdash U)}{(\Gamma\vdash
 s \ci M)\Rightarrow  (|\Gamma| \vdash u \ci U)}\\[15pt]
(vi)& (\Gamma\vdash id \tri\Gamma)\Rightarrow (|\Gamma| \vdash id ) \\[5pt]
(vii)& (\Gamma,\x\vdash\pi_{\x} \tri\Gamma)\Rightarrow  (|\Gamma,\x| \vdash \pi ) \\[5pt]
(viii)& \ruletwo{(\Gamma\vdash s \tri\Delta)\Rightarrow  (|\Gamma|
\vdash u ) }{(\Gamma\vdash N)\Rightarrow  (|\Gamma| \vdash
V)}{(\Gamma\vdash
\langle s\mi N\is \x\rangle \tri\Delta ,\x)\Rightarrow (|\Gamma| \vdash\langle u\mi V\rangle )}\\[15pt]
(ix)& \ruletwo{(\Gamma\vdash s \tri\Delta)\Rightarrow  (|\Gamma|
\vdash u) }{(\Delta\vdash q \tri\Sigma)\Rightarrow (|\Delta|
\vdash v ) }{(\Gamma \vdash s \ci q \tri\Sigma)\Rightarrow
(|\Gamma|
\vdash u \ci v )}\\
\end{array}
$
\end{definition}
\begin{corollary}If $(\Gamma\vdash M)\Rightarrow(m\vdash U)$, then
$m=|\Gamma|$.\\
 If $(\Gamma\vdash s\tri\Delta)\Rightarrow(m\vdash u)$, then $m=|\Gamma|$.
\end{corollary}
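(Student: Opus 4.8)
The plan is to prove both assertions simultaneously by induction on the derivation of the correspondence relation $\Rightarrow$ from Definition~\ref{deftransfer}. A mutual induction is forced upon us because the rules intertwine the two judgement forms: rule $(v)$ produces a term correspondence $(\Gamma\vdash s\ci M)\Rightarrow(|\Gamma|\vdash u\ci U)$ from a substitution correspondence and a term correspondence, and rule $(ix)$ produces a substitution correspondence from two substitution correspondences. Hence I would state the two claims as a single conjunction and induct on the height of the derivation tree.

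The heart of the matter --- and the reason the induction is essentially immediate --- is that in each of the nine rules the first component of the name-free judgement appearing in the conclusion is written \emph{verbatim} as the length of the context standing to the left of $\vdash$ in that same conclusion. For the three axioms this is plain by inspection: rule $(i)$ concludes $(\Gamma,\x\vdash\x)\Rightarrow(|\Gamma,\x|\vdash\underline{1})$, rule $(vi)$ concludes $(\Gamma\vdash id\tri\Gamma)\Rightarrow(|\Gamma|\vdash id)$, and rule $(vii)$ concludes $(\Gamma,\x\vdash\pi_{\x}\tri\Gamma)\Rightarrow(|\Gamma,\x|\vdash\pi)$, so in every case the first coordinate already \emph{is} the length of the pertinent context. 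For the six inductive rules $(ii)$, $(iii)$, $(iv)$, $(v)$, $(viii)$, $(ix)$ the same phenomenon occurs: the first coordinate of the conclusion is displayed as $|\Gamma|$ (or $|\Gamma,\y|$ in rule $(ii)$), which is exactly the length of the context of the conclusion, independently of whatever the induction hypotheses say about the premises.

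Consequently the induction hypotheses are not even needed to read off the \emph{value} of $m$; the number is supplied syntactically by the rule itself. All the induction really delivers is that any derivation is built from these nine rules alone, so that a derivable correspondence must match one of the nine displayed shapes, at which point the first coordinate can be read off. I therefore expect no genuine obstacle. If one insists on being scrupulous, the single point worth checking is the coherence of the shared metavariables inside a compound rule --- for example, that the intermediate context $\Delta$ through which $s$ factors in rule $(v)$ is the very context whose length governs the term premise $(\Delta\vdash M)\Rightarrow(|\Delta|\vdash U)$ --- but this coherence is built into the statement of the rule through the repeated names and requires no separate argument.
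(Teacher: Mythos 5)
Your proposal is correct and matches the paper's (implicit) reasoning: the paper states this as a corollary of Definition~\ref{deftransfer} with no separate proof, precisely because, as you observe, every rule writes the first coordinate of the concluding name-free judgement verbatim as the length of the concluding context, so the claim follows by inspection of the last rule applied. Your further remark that the induction hypotheses are never actually consulted --- a case analysis on the final rule suffices --- is exactly why the paper treats the statement as immediate.
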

\begin{example}
$$
(x\vdash x)\Rightarrow (1 \vdash \underline{1})
$$
\end{example}
\begin{example}
$$
\ruleone{(x\vdash x)\Rightarrow (1 \vdash \underline{1})}{(x,y
\vdash x)\Rightarrow (2 \vdash \underline{2})}
$$
\end{example}
\begin{example}
$$
\ruletwo{(x,y \vdash \pi_y\tri x) \Rightarrow (2 \vdash \pi)}{(x
\vdash x) \Rightarrow (1 \vdash \underline{1})}{(x,y \vdash
\pi_y\ci x) \Rightarrow
 (2 \vdash \pi\ci \underline{1})}
$$
\end{example}
\begin{definition}($\alpha$-equivalence).\\
 We say that $\Gamma\vdash M$ is $\alpha$-\emph{equal} to $\Delta\vdash N$  and write\\
$(\Gamma\vdash M)\alp(\Delta\vdash N)$ iff\\
$(\Gamma\vdash M)\Rightarrow (m\vdash U)$ and\\ $(\Delta\vdash
N)\Rightarrow (m\vdash U)$, for some $m,U$.
\end{definition}
\begin{example}$ $\\
$(x,y\vdash\pi_y\ci x)\Rightarrow (2\vdash
\pi\ci \underline{1})$\\
$(x,x\vdash\pi_x\ci x)\Rightarrow (2\vdash
\pi\ci \underline{1})$\\
 $(x,y\vdash\pi_y\ci
x)\alp (x,x\vdash\pi_x\ci x)$
\end{example}
\begin{example}$ $\\
$(x\vdash\lambda y.\pi_y\ci x)\Rightarrow (1\vdash
\lambda\pi\ci \underline{1})$\\
$(x\vdash\lambda x.\pi_x\ci x)\Rightarrow (1\vdash
\lambda\pi\ci \underline{1})$\\
 $(x\vdash\lambda y.\pi_y\ci
x)\alp (x\vdash\lambda x.\pi_x\ci x)$
\end{example}
\begin{example}$ $\\
$(\vdash\lambda x.\lambda y.\pi_y\ci x)\Rightarrow (0\vdash
\lambda\lambda\pi\ci \underline{1})$\\
$(\vdash\lambda x.\lambda x.\pi_x\ci x)\Rightarrow (0\vdash
\lambda\lambda\pi\ci \underline{1})$\\
 $(\vdash\lambda x.\lambda y.\pi_y\ci
x)\alp (\vdash\lambda x.\lambda x.\pi_x\ci x)$
\end{example}
Warning! We can apply $\pi_1$ to the term $\lambda x.\lambda
y.\pi_y\ci x$, but not to the term $\lambda x.\lambda x.\pi_x\ci
x$ (and we can apply $\alpha_1$ to the term $\lambda x.\lambda
x.\pi_x\ci x$, but not to the term $\lambda x.\lambda y.\pi_y\ci
x$).
\begin{example}\label{counterexample}$ $\\
$(x,y\vdash x)
\backsimeq (x,y\vdash\pi_y\ci x)$\\
 $(x,y\vdash x)\,{\not\alp} (x,y\vdash\pi_y\ci x)$
 \end{example}
 \begin{lemma}If $(\Gamma\vdash M)\Rightarrow(m\vdash U)$, then
 $\Gamma\vdash M$ is derivable.
 \end{lemma}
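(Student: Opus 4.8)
The plan is to prove, simultaneously with the companion statement for substitutions, the claim by induction over the derivation of the correspondence $(\Gamma\vdash M)\Rightarrow(m\vdash U)$ built from the rules of Definition~\ref{deftransfer}. Because clauses $(v)$, $(viii)$ and $(ix)$ interleave term- and substitution-correspondences, the induction must handle both kinds of judgement at once: if $(\Gamma\vdash M)\Rightarrow(m\vdash U)$ then $\Gamma\vdash M$ is derivable, and if $(\Gamma\vdash s\tri\Delta)\Rightarrow(m\vdash u)$ then $\Gamma\vdash s\tri\Delta$ is derivable. This is the same strategy as in Lemma~\ref{poleznaya}, now carried out for the refined relation that also tracks the context length $m$.

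The key structural observation is that the nine clauses $(i)$--$(ix)$ of Definition~\ref{deftransfer} are in bijection with the nine inference rules $(i)$--$(ix)$ of Definition~\ref{defsequentsder}: erasing the name-free data $(m\vdash U)$ and $(m\vdash u)$ from a clause of the former turns it into exactly the matching rule of the latter, side conditions included (the condition $\x\neq\y$ in clause $(ii)$ matches the restriction of rule $(ii)$). Consequently, in each case I would read off from the last clause used which inference rule to apply, invoke the induction hypothesis on each premise to obtain derivations of the component judgements, and then apply the matching rule of Definition~\ref{defsequentsder} to conclude.

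Concretely, in the base case $(i)$ the correspondence $(\Gamma,\x\vdash\x)\Rightarrow(|\Gamma,\x|\vdash\underline{1})$ can only arise from clause $(i)$, and $\Gamma,\x\vdash\x$ is derivable by rule $(i)$. In case $(v)$, a derivation of $(\Gamma\vdash s\ci M)\Rightarrow(|\Gamma|\vdash u\ci U)$ must end with clause $(v)$, whose premises are $(\Gamma\vdash s\tri\Delta)\Rightarrow(|\Gamma|\vdash u)$ and $(\Delta\vdash M)\Rightarrow(|\Delta|\vdash U)$; the induction hypothesis yields derivations of $\Gamma\vdash s\tri\Delta$ and $\Delta\vdash M$, and rule $(v)$ of Definition~\ref{defsequentsder} assembles $\Gamma\vdash s\ci M$. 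The remaining cases $(ii)$, $(iii)$, $(iv)$, $(vi)$, $(vii)$, $(viii)$, $(ix)$ are entirely analogous.

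I do not expect a genuine obstacle here; the content is bookkeeping. The only points requiring care are to set up the induction simultaneously over terms and substitutions so that the mutually recursive clauses are all covered, and to check that the side conditions transfer verbatim. The context-length component $m$ plays no role in derivability and is simply ignored; its value is pinned down separately by the preceding corollary, namely $m=|\Gamma|$.
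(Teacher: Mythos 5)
Your proposal is correct and matches the paper's own argument: the paper disposes of this lemma with ``the proof is straightforward, see Definition~\ref{defsequentsder} and Definition~\ref{deftransfer},'' which is precisely the rule-by-rule mirroring induction (mutual over terms and substitutions) that you spell out. The only difference is that you make the bookkeeping explicit, including the observation that erasing the name-free component of each clause of Definition~\ref{deftransfer} yields exactly the corresponding rule of Definition~\ref{defsequentsder}.
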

 \begin{proof}The proof is straightforward, see Definition~\ref{defsequentsder} and
 Definition~\ref{deftransfer}.
 \end{proof}
\begin{corollary}If $(\Gamma\vdash M)\alp(\Delta\vdash N)$, then $\Gamma\vdash
M$ and $\Delta\vdash N$ are derivable.
\end{corollary}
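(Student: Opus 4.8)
The plan is to unfold the definition of $\alpha$-equivalence and then invoke the immediately preceding lemma twice; this corollary carries no content beyond that lemma.

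First I would observe that, by the definition of $(\Gamma\vdash M)\alp(\Delta\vdash N)$, the hypothesis provides some natural number $m$ and some extended name-free term $U$ such that both correspondences hold, namely $(\Gamma\vdash M)\Rightarrow(m\vdash U)$ and $(\Delta\vdash N)\Rightarrow(m\vdash U)$. These are exactly the premises needed to apply the preceding lemma.

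Next I would apply that lemma to each of the two correspondences separately. Applying it to $(\Gamma\vdash M)\Rightarrow(m\vdash U)$ yields that $\Gamma\vdash M$ is derivable, and applying it to $(\Delta\vdash N)\Rightarrow(m\vdash U)$ yields that $\Delta\vdash N$ is derivable. Since both conclusions are obtained, the corollary follows immediately.

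There is no genuine obstacle here: the entire work has already been discharged by the preceding lemma, whose own proof is a routine comparison of Definition~\ref{defsequentsder} with Definition~\ref{deftransfer}. The only thing to be careful about is that the common value $m\vdash U$ supplied by the $\alpha$-equivalence is shared by both judgements, so that the lemma can be applied verbatim to each half; no matching or compatibility between $\Gamma\vdash M$ and $\Delta\vdash N$ beyond the existence of this common image is required for derivability.
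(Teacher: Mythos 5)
Your proof is correct and matches the paper's intended argument exactly: the corollary is stated immediately after the lemma precisely because it follows by unfolding the definition of $\equiv_{\alpha}$ and applying that lemma once to each of the two correspondences $(\Gamma\vdash M)\Rightarrow(m\vdash U)$ and $(\Delta\vdash N)\Rightarrow(m\vdash U)$. Nothing more is needed, and your remark that no compatibility between the two judgements beyond the shared image is required is accurate.
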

\newpage

\section{Confluence}
\begin{lemma}\label{alpequiv}$ $\\
Suppose $(\Gamma\vdash M)\alp(\Delta\vdash N)$;
 then $(\Gamma\vdash M)\backsimeq(\Delta\vdash
N)$.\\
 Suppose $(\Gamma\vdash M)\backsimeq(\Delta\vdash N)$, where
$|\Gamma|=|\Delta|$ and both $M$ and $N$ are pure;
 then $(\Gamma\vdash
M)\alp(\Delta\vdash N)$.
\end{lemma}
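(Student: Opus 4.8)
The plan is to route everything through a single translation relating the two correspondences $\boldsymbol{\Rightarrow}$ of Definition~\ref{transfer} and $\Rightarrow$ of Definition~\ref{deftransfer}. I would define a map $\phi$ from extended name-free terms and substitutions to ordinary ones that sends the primitive $\underline{n}$ to the composite $\underline{n}\equiv((\ldots(\pi\ci\pi)\ci\ldots)\ci\pi)\ci 1$ of the ordinary calculus and commutes with every other constructor, i.e. $\phi(UV)=\phi(U)\phi(V)$, $\phi(\lambda U)=\lambda\phi(U)$, $\phi(u\ci U)=\phi(u)\ci\phi(U)$, $\phi(id)=id$, $\phi(\pi)=\pi$, $\phi(\la u\mi V\ra)=\la\phi(u)\mi\phi(V)\ra$, and $\phi(u\ci v)=\phi(u)\ci\phi(v)$. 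First I would establish the bridging claim by simultaneous induction on the unique derivation of a judgement: $(\Gamma\vdash M)\Rightarrow(|\Gamma|\vdash U)$ iff $(\Gamma\vdash M)\boldsymbol{\Rightarrow}\phi(U)$, and likewise for substitutions. This is essentially a bookkeeping observation, since the schemata $(i)$--$(ix)$ of the two definitions are identical along the unique derivation, differing only in that Definition~\ref{deftransfer} records $m=|\Gamma|$ (its following Corollary) and treats $\underline{n}$ as primitive where Definition~\ref{transfer} writes the abbreviation; $\phi$ is exactly the homomorphism erasing that difference.

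Part one would then follow immediately. If $(\Gamma\vdash M)\alp(\Delta\vdash N)$, there are $m,U$ with $(\Gamma\vdash M)\Rightarrow(m\vdash U)$ and $(\Delta\vdash N)\Rightarrow(m\vdash U)$; feeding both through the bridging claim yields $(\Gamma\vdash M)\boldsymbol{\Rightarrow}\phi(U)$ and $(\Delta\vdash N)\boldsymbol{\Rightarrow}\phi(U)$, so the two judgements correspond to the same name-free term $\phi(U)$, which is precisely $(\Gamma\vdash M)\backsimeq(\Delta\vdash N)$. No hypothesis on purity or on context lengths is used here, reflecting that $\alp$ carries strictly more data than $\backsimeq$.

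For part two I would start from the common name-free term $U'$ furnished by $\backsimeq$, so $(\Gamma\vdash M)\boldsymbol{\Rightarrow}U'$ and $(\Delta\vdash N)\boldsymbol{\Rightarrow}U'$. Letting $U_M,U_N$ be the extended terms with $(\Gamma\vdash M)\Rightarrow(|\Gamma|\vdash U_M)$ and $(\Delta\vdash N)\Rightarrow(|\Delta|\vdash U_N)$, the bridging claim gives $\phi(U_M)=U'=\phi(U_N)$. Because $M$ is pure, its derivation uses only rules $(i)$--$(iv)$, so $U_M$ is built from the $\underline{n}$ by application and abstraction alone, i.e. $U_M$ is a \emph{pure} extended term; the same holds for $U_N$. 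The hard part will be the injectivity of $\phi$ on pure extended terms, which I would prove by structural induction by showing that the top constructor of $\phi(U)$ determines that of $U$: an application $\phi(V)\phi(W)$ forces $U=VW$ and an abstraction $\lambda\phi(V)$ forces $U=\lambda V$, whereas $\phi(\underline{n})$ is either $1$ or of the composition shape $u\ci U$ and so forces $U=\underline{n}$, with $n$ recovered as one plus the number of occurrences of $\pi$. Injectivity gives $U_M=U_N$, and since $|\Gamma|=|\Delta|=:m$ both judgements satisfy $\Rightarrow(m\vdash U_M)$, i.e. $(\Gamma\vdash M)\alp(\Delta\vdash N)$.

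I expect the subtlety to live entirely in that injectivity step, and both side conditions to enter there. Without purity $\phi$ collapses distinct extended terms, e.g. $\phi(\underline{2})=\pi\ci 1=\phi(\pi\ci\underline{1})$, which is exactly why Example~\ref{counterexample} gives a $\backsimeq$-pair built from the non-pure term $\pi_y\ci x$ that is not $\alp$; and the hypothesis $|\Gamma|=|\Delta|$ is needed to match the recorded length $m$, since, for instance, $\vdash\lambda x.x$ and $y\vdash\lambda x.x$ are $\backsimeq$ (both $\boldsymbol{\Rightarrow}\lambda 1$, with pure terms) yet fail to be $\alp$ because their $m$ differ.
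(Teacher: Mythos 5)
Your proposal is correct and is essentially a rigorous elaboration of the paper's own (two-line) argument: the paper likewise disposes of part one by comparing Definition~\ref{transfer} with Definition~\ref{deftransfer}, and proves part two by observing that purity forces the corresponding terms to be built from the $\underline{n}$ by application and abstraction alone, which is exactly what your injectivity of $\phi$ on pure extended terms formalizes and exactly what blocks Example~\ref{counterexample}. One blemish: stated as an ``iff'', your bridging claim fails right-to-left (e.g. $(x,y\vdash x)\boldsymbol{\Rightarrow}\pi\ci 1=\phi(\pi\ci\underline{1})$, yet $(x,y\vdash x)\Rightarrow(2\vdash\underline{2})$, not $(2\vdash\pi\ci\underline{1})$), but your argument only ever invokes the left-to-right direction together with uniqueness of the corresponding term (immediate from uniqueness of derivations), so nothing breaks.
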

\begin{proof}The proof of the first part is straightforward, see Definition~\ref{transfer} and Definition~\ref{deftransfer}.  To prove the second part, recall that each pure term is constructed from variables by using
 application and abstraction. This prevents  such
 counterexamples as Example~\ref{counterexample}.
\end{proof}
\begin{theorem}[$\sigma\pi\alpha$ is confluent]\label{confluence1}
Suppose\\
$\begin{array}{l}
 (\Gamma_1\vdash M_1)\alp(\Gamma_2\vdash M_2);\\
 \Gamma_1\vdash M_1\overset{\sigma\pi\alpha}{\ri\ri}\Delta_1\vdash N_1;\\
 \Gamma_2\vdash
M_2\overset{\sigma\pi\alpha}{\ri\ri}\Delta_2\vdash N_2;
\end{array}$\\
then there are $\Sigma_1\vdash L_1$ and $\Sigma_2\vdash L_2$ such
that\\
$\begin{array}{l} \Delta_1\vdash
N_1\overset{\sigma\pi\alpha}{\ri\ri}\Sigma_1\vdash
L_1;\\
 \Delta_2\vdash N_2\overset{\sigma\pi\alpha}{\ri\ri}\Sigma_2\vdash L_2;\\
 (\Sigma_1\vdash L_1)\alp(\Sigma_2\vdash L_2).
\end{array}$
\end{theorem}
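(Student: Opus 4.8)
The plan is to reduce confluence up to $\alpha$-equivalence of $\sigma\pi\alpha$ to the confluence of the name-free calculus $\sigma$, exploiting that $\sigma\pi\alpha$ is strongly normalizing while $\sigma$ has unique normal forms. The guiding observation is that the name-free term attached to a judgement is an $\alpha$-invariant: by comparing Definition~\ref{transfer} with Definition~\ref{deftransfer} rule by rule, one sees that $(\Gamma\vdash M)\boldsymbol{\Rightarrow}U$ holds iff $(\Gamma\vdash M)\Rightarrow(|\Gamma|\vdash U)$, so that $(\Sigma_1\vdash L_1)\alp(\Sigma_2\vdash L_2)$ amounts to $|\Sigma_1|=|\Sigma_2|$ together with equality of the associated name-free terms. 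Thus it suffices to produce $\sigma\pi\alpha$-normal forms of $\Delta_1\vdash N_1$ and $\Delta_2\vdash N_2$ whose name-free terms coincide and whose contexts have equal length.

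First I would note that everything in sight is derivable: by the corollary to the definition of $\alp$, both $\Gamma_1\vdash M_1$ and $\Gamma_2\vdash M_2$ are derivable, and Subject reduction then makes $\Delta_1\vdash N_1$ and $\Delta_2\vdash N_2$ derivable as well. Since $\sigma\pi\alpha$ is strongly normalizing (Theorem~\ref{SN}), I may fix $\sigma\pi\alpha$-normal forms $\Sigma_1\vdash L_1$ and $\Sigma_2\vdash L_2$ with $\Delta_i\vdash N_i\overset{\sigma\pi\alpha}{\ri\ri}\Sigma_i\vdash L_i$; these are again derivable by Subject reduction, so their name-free terms $(\Sigma_i\vdash L_i)\boldsymbol{\Rightarrow}V_i$ are defined (and unique, since derivations are unique). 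This already yields the first two required reductions.

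Next I would pin down the name-free terms. Let $U$ be the common name-free term of the two starting judgements, so that $(\Gamma_1\vdash M_1)\boldsymbol{\Rightarrow}U$ and $(\Gamma_2\vdash M_2)\boldsymbol{\Rightarrow}U$ with $|\Gamma_1|=|\Gamma_2|=m$, as provided by $(\Gamma_1\vdash M_1)\alp(\Gamma_2\vdash M_2)$ via the identification above. Composing the given reductions with the reductions to normal form gives $\Gamma_i\vdash M_i\overset{\sigma\pi\alpha}{\ri\ri}\Sigma_i\vdash L_i$ into a $\sigma\pi\alpha$-normal form, so Theorem~\ref{maintheorem} applies to each: $V_i$ is a $\sigma$-normal form and $U\overset{\sigma}{\twoh}V_i$. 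Because $\sigma$ is strongly normalizing and confluent, its normal forms are unique, whence $V_1\equiv\sigma(U)\equiv V_2$. Finally, context length is preserved by every $\sigma\pi\alpha$-step (only $\alpha_2$ alters the context, and it merely renames one variable), so $|\Sigma_1|=m=|\Sigma_2|$. Combining equal name-free terms with equal context lengths, $(\Sigma_1\vdash L_1)\Rightarrow(m\vdash V_1)$ and $(\Sigma_2\vdash L_2)\Rightarrow(m\vdash V_1)$, that is $(\Sigma_1\vdash L_1)\alp(\Sigma_2\vdash L_2)$, which closes the diagram.

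The real content is carried entirely by Theorem~\ref{maintheorem}, so the remaining work is bookkeeping rather than a genuine obstacle. The two points that must be checked with care are the coincidence of the two correspondences $\boldsymbol{\Rightarrow}$ and $\Rightarrow$ on the name-free-term component (a routine induction on derivations, using uniqueness of derivations) and the invariance of $|\Gamma|$ under $\ri$ (immediate from the reduction rules). Neither requires any local-confluence or critical-pair analysis: strong normalization of $\sigma\pi\alpha$ supplies the normal forms, and uniqueness of $\sigma$-normal forms forces their agreement, so confluence up to $\alpha$ falls out without a Newman-style argument.
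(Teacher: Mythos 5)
Your overall strategy coincides with the paper's: derivability via Subject reduction, existence of $\sigma\pi\alpha$-normal forms via Theorem~\ref{SN}, Theorem~\ref{maintheorem} to force both normal forms onto the name-free term $\sigma(U)$, and preservation of context length. But the step where you convert this back into $\alpha$-equivalence rests on a false claim. Your ``guiding observation'' --- that $(\Gamma\vdash M)\boldsymbol{\Rightarrow}U$ holds iff $(\Gamma\vdash M)\Rightarrow(|\Gamma|\vdash U)$, so that $\alp$ amounts to equal context lengths plus equal $\boldsymbol{\Rightarrow}$-images --- does not survive a rule-by-rule comparison, because the two correspondences land in different syntaxes. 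In Section~7 the symbol $\underline{n}$ is an \emph{abbreviation} for the composite term $((\ldots(\pi\ci\pi)\ci\ldots)\ci\pi)\ci 1$, whereas in Section~8 the symbols $\underline{n}$ are \emph{primitive constants} of the extended name-free syntax, so that $\underline{2}$ and $\pi\ci\underline{1}$ are distinct extended terms. Concretely, $(x,y\vdash x)$ and $(x,y\vdash \pi_y\ci x)$ have the same $\boldsymbol{\Rightarrow}$-image $\pi\ci 1$ and the same context length, yet $(x,y\vdash x)\Rightarrow(2\vdash\underline{2})$ while $(x,y\vdash \pi_y\ci x)\Rightarrow(2\vdash\pi\ci\underline{1})$; this is exactly the paper's Example~\ref{counterexample}, which exhibits $\backsimeq$ with equal lengths but not $\alp$. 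So the $\Rightarrow$-image is not a function of the $\boldsymbol{\Rightarrow}$-image and the context length, and your ``routine induction on derivations'' cannot go through: it breaks when comparing rule~(ii) of Definition~\ref{transfer} (which unfolds $\underline{n+1}$ into $\pi$'s) against rule~(v) applied to explicit substitutions.

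What saves the theorem is purity, and that is precisely the ingredient your proof never invokes. The implication ``$\backsimeq$ plus equal context lengths implies $\alp$'' is valid only when both terms are pure --- this is the second half of Lemma~\ref{alpequiv} --- and the judgements $\Sigma_1\vdash L_1$, $\Sigma_2\vdash L_2$ to which you apply it \emph{are} pure, but only because derivable $\sigma\pi\alpha$-normal forms are pure, which is Theorem~\ref{pure}, a substantive result of Section~6 with its own nontrivial proof. (The direction you use at the start, $\alp$ implies $\backsimeq$ together with equal lengths, is the unproblematic first half of Lemma~\ref{alpequiv}, so the opening of your argument is sound in substance even though you justify it by the false biconditional.) Once you insert ``$L_1$ and $L_2$ are pure by Theorem~\ref{pure}'' and replace your identification of the two correspondences by an appeal to Lemma~\ref{alpequiv}, your argument becomes the paper's proof.
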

\begin{proof}By Lemma~\ref{alpequiv}, we have $(\Gamma_1\vdash M_1)\backsimeq(\Gamma_2\vdash M_2)$.\\ Suppose
$(\Gamma_1\vdash M_1)\boldsymbol{\Rightarrow}U$ and
$(\Gamma_2\vdash M_2)\boldsymbol{\Rightarrow}U$.\\
 Let $\Sigma_1\vdash L_1$ be any $\sigma\pi\alpha$-normal form of $\Delta_1\vdash N_1$ and let $\Sigma_2\vdash L_2$ be
any $\sigma\pi\alpha$-normal form of $\Delta_2\vdash N_2$. By
Theorem~\ref{maintheorem}, we have\\ $(\Sigma_1\vdash
L_1)\boldsymbol{\Rightarrow}\sigma(U)$ and  $(\Sigma_2\vdash
L_2)\boldsymbol{\Rightarrow}\sigma(U)$, hence\\ $(\Sigma_1\vdash
L_1)\backsimeq(\Sigma_2\vdash L_2)$. Note that $L_1$ and $L_2$ are
pure (Theorem~\ref{pure}). Note that
$|\Sigma_1|=|\Sigma_2|=|\Gamma_1|=|\Gamma_2|$ (because all
reductions preserve lengths of contexts). By Lemma~\ref{alpequiv},
we have\\
 $(\Sigma_1\vdash L_1)\alp(\Sigma_2\vdash L_2)$.
\end{proof}
\begin{theorem}[$\lambda\pi$ is confluent]
Suppose\\
$\begin{array}{l}
 (\Gamma_1\vdash M_1)\alp(\Gamma_2\vdash M_2);\\
 \Gamma_1\vdash M_1\overset{\lambda\pi}{\ri\ri}\Delta_1\vdash N_1;\\
 \Gamma_2\vdash
M_2\overset{\lambda\pi}{\ri\ri}\Delta_2\vdash N_2;
\end{array}$\\
then there are $\Sigma_1\vdash L_1$ and $\Sigma_2\vdash L_2$ such
that\\
$\begin{array}{l}  \Delta_1\vdash
N_1\overset{\lambda\pi}{\ri\ri}\Sigma_1\vdash
L_1;\\
 \Delta_2\vdash N_2\overset{\lambda\pi}{\ri\ri}\Sigma_2\vdash L_2;\\
 (\Sigma_1\vdash L_1)\alp(\Sigma_2\vdash L_2).
\end{array}$
\end{theorem}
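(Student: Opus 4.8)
The plan is to mirror the proof of Theorem~\ref{confluence1}, but to route the $Beta$-steps through the confluence of the ground $\lambda\sigma$-calculus (from~\cite{Abadi}) by means of the two transfer theorems, Theorem~\ref{theorem33} and Theorem~\ref{theorem44}. First I would use the first part of Lemma~\ref{alpequiv} to pass from $(\Gamma_1\vdash M_1)\alp(\Gamma_2\vdash M_2)$ to $(\Gamma_1\vdash M_1)\backsimeq(\Gamma_2\vdash M_2)$, fixing a common name-free term $U$ with $(\Gamma_1\vdash M_1)\boldsymbol{\Rightarrow}U$ and $(\Gamma_2\vdash M_2)\boldsymbol{\Rightarrow}U$. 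By Lemma~\ref{poleznaya} and Subject reduction all intermediate judgements are derivable, so each $\Delta_i\vdash N_i$ has a corresponding name-free term $V_i$ with $(\Delta_i\vdash N_i)\boldsymbol{\Rightarrow}V_i$.

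Next, applying Theorem~\ref{theorem33} to each branch $\Gamma_i\vdash M_i\overset{\lambda\pi}{\ri\ri}\Delta_i\vdash N_i$ yields $\sigma(U)\overset{\lambda\sigma}{\twoh}\sigma(V_1)$ and $\sigma(U)\overset{\lambda\sigma}{\twoh}\sigma(V_2)$. Now I would invoke confluence of $\lambda\sigma$ on ground (metavariable-free) terms to obtain a common reduct $V'$ with $\sigma(V_1)\overset{\lambda\sigma}{\twoh}V'$ and $\sigma(V_2)\overset{\lambda\sigma}{\twoh}V'$. Since $\sigma$ is a subsystem of $\lambda\sigma$, we have $V_i\overset{\lambda\sigma}{\twoh}\sigma(V_i)$, and hence $V_i\overset{\lambda\sigma}{\twoh}V'$ for $i=1,2$.

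With these in hand, Theorem~\ref{theorem44} lifts each $\lambda\sigma$-reduction back to the level of judgements: from $(\Delta_i\vdash N_i)\boldsymbol{\Rightarrow}V_i$ and $V_i\overset{\lambda\sigma}{\twoh}V'$ it produces $\Sigma_i\vdash L_i$ with $\Delta_i\vdash N_i\overset{\lambda\pi}{\ri\ri}\Sigma_i\vdash L_i$, with $\Sigma_i\vdash L_i$ a $\sigma\pi\alpha$-normal form, and with $(\Sigma_i\vdash L_i)\boldsymbol{\Rightarrow}\sigma(V')$. These are exactly the two reducts the statement asks for. Since both correspond to the same $\sigma(V')$, we obtain $(\Sigma_1\vdash L_1)\backsimeq(\Sigma_2\vdash L_2)$; and because each $\Sigma_i\vdash L_i$ is a $\sigma\pi\alpha$-normal form, $L_1$ and $L_2$ are pure by Theorem~\ref{pure}. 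Finally, all reductions preserve the length of the context and $\alp$ forces $|\Gamma_1|=|\Gamma_2|$, so $|\Sigma_1|=|\Sigma_2|$; the second part of Lemma~\ref{alpequiv} then upgrades $\backsimeq$ to $(\Sigma_1\vdash L_1)\alp(\Sigma_2\vdash L_2)$, which closes the diagram.

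The main obstacle is not symbol manipulation, which is entirely absorbed into Theorems~\ref{theorem33} and~\ref{theorem44} and Lemma~\ref{alpequiv}, but the legitimate use of confluence of $\lambda\sigma$. One must be sure that the name-free terms occurring here carry no metavariables, since $\lambda\sigma$ is confluent only on ground terms; this is automatic, because the grammar of name-free terms is built solely from $\underline{n}$ (equivalently $1$), application, abstraction, and explicit substitutions, with no metavariable production. The remaining care is bookkeeping: confirming that each $V_i$ exists, that $V_i\overset{\lambda\sigma}{\twoh}V'$ genuinely holds before it is fed into Theorem~\ref{theorem44}, and that the context-length equality required by the named $\alpha$-equivalence is preserved by every reduction rule, including $\alpha_1$ and $\alpha_2$.
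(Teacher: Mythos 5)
Your proposal is correct and follows essentially the same route as the paper's own proof: pass from $\alp$ to $\backsimeq$ via Lemma~\ref{alpequiv}, push both branches into $\lambda\sigma$ by Theorem~\ref{theorem33}, close the diagram there using confluence of $\lambda\sigma$, lift back with Theorem~\ref{theorem44}, and upgrade $\backsimeq$ to $\alp$ using purity (Theorem~\ref{pure}) and preservation of context lengths. Your extra remarks on derivability of the intermediate judgements (via Lemma~\ref{poleznaya} and Subject reduction) and on groundness of the name-free terms only make explicit what the paper leaves implicit.
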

\begin{proof}By Lemma~\ref{alpequiv}, we have $(\Gamma_1\vdash M_1)\backsimeq(\Gamma_2\vdash M_2)$.\\ Suppose
$(\Gamma_1\vdash M_1)\boldsymbol{\Rightarrow}U$;
$(\Gamma_2\vdash M_2)\boldsymbol{\Rightarrow}U$;\\
$(\Delta_1\vdash N_1)\boldsymbol{\Rightarrow}V_1$; and
$(\Delta_2\vdash N_2)\boldsymbol{\Rightarrow}V_2$.\\
 By
Theorem~\ref{theorem33}, we have
$\sigma(U)\overset{\lambda\sigma}{\twoh} \sigma(V_1)$ and
$\sigma(U)\overset{\lambda\sigma}{\twoh} \sigma(V_2)$.\\  We know
that $\lambda\sigma$ is confluent, hence
$\sigma(V_1)\overset{\lambda\sigma}{\twoh} V$ and
 $\sigma(V_2)\overset{\lambda\sigma}{\twoh} V$ for
some $V$. Therefore $V_1\overset{\lambda\sigma}{\twoh} V$ and
$V_2\overset{\lambda\sigma}{\twoh} V$. By Theorem~\ref{theorem44},
we have $\Sigma_1\vdash L_1$ and
$\Sigma_2\vdash L_2$ such that\\
$\begin{array}{l}  \Delta_1\vdash
N_1\overset{\lambda\pi}{\ri\ri}\Sigma_1\vdash
L_1;\\
 \Delta_2\vdash N_2\overset{\lambda\pi}{\ri\ri}\Sigma_2\vdash L_2;\\
 \Sigma_1\vdash L_1 \text{ is a $\sigma\pi\alpha$-normal form};\\
\Sigma_2\vdash L_2 \text{ is a $\sigma\pi\alpha$-normal form};\\
(\Sigma_1\vdash V_1)\boldsymbol{\Rightarrow}\sigma(V);\\
(\Sigma_2\vdash V_2)\boldsymbol{\Rightarrow}\sigma(V).
\end{array}$\\[5pt]
Hence  $(\Sigma_1\vdash L_1)\backsimeq(\Sigma_2\vdash L_2)$. Note
that $L_1$ and $L_2$ are pure (Theorem~\ref{pure}). Note that
$|\Sigma_1|=|\Sigma_2|=|\Gamma_1|=|\Gamma_2|$ (because all
reductions preserve lengths of contexts). By Lemma~\ref{alpequiv},
we have\\
 $(\Sigma_1\vdash L_1)\alp(\Sigma_2\vdash L_2)$.
\end{proof}
\begin{definition}By $\Lambda\pi$ denote the set of derivable judgements of
the form $\Gamma\vdash M$.
\end{definition}
We see that $\overset{\lambda\pi}{\ri\ri}$ and
$\overset{\sigma\pi\alpha}{\ri\ri}$ are confluent (up to $\alp$)
on the set $\Lambda\pi$.
\newpage

\section{$\sigma\pi\alpha$ is strongly normalizing}\label{section9}
 \begin{definition}
  $\mathcal{A} \sqsubseteq \mathcal{B}$ is shorthand for
``$\mathcal{A}_i\subseteq\bigcup_{j\geqslant i}\mathcal{B}_j$ for
all $i\geqslant 1$".
\end{definition}
\begin{example}\label{examplepi}$\langle\{y\},\emptyset,\emptyset,\ldots\rangle\sqsubseteq\langle\emptyset,\{y\},\emptyset,\emptyset,\ldots\rangle$
\end{example}
Note that $\mathcal{A} \subseteq \mathcal{B}$ implies $\mathcal{A}
\sqsubseteq \mathcal{B}$.
\begin{lemma}$O_{\lambda \x}$ and $O_{\pi}$ are monotone
operators with respect to $\sqsubseteq$ (for any $\x$).
\end{lemma}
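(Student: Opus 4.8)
The plan is to unfold both operators component by component and verify the defining inequality of $\sqsubseteq$ at each index $i\geqslant 1$, in each case reducing the goal to the hypothesis $\mathcal{A}\sqsubseteq\mathcal{B}$ evaluated at a suitably shifted index. First I would record the explicit components: $O_{\pi}(\mathcal{A})_1=\emptyset$ and $O_{\pi}(\mathcal{A})_i=\mathcal{A}_{i-1}$ for $i\geqslant 2$; while $O_{\lambda\x}(\mathcal{A})_1=(\mathcal{A}_1\setminus\{\x\})\cup\mathcal{A}_2$ and $O_{\lambda\x}(\mathcal{A})_i=\mathcal{A}_{i+1}$ for $i\geqslant 2$.

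For $O_{\pi}$, the index $i=1$ is immediate since the first component is $\emptyset$. For $i\geqslant 2$, after the re-indexing $k=j-1$ the target $\bigcup_{j\geqslant i}O_{\pi}(\mathcal{B})_j$ equals $\bigcup_{k\geqslant i-1}\mathcal{B}_k$, so the required inclusion $\mathcal{A}_{i-1}\subseteq\bigcup_{k\geqslant i-1}\mathcal{B}_k$ is exactly the hypothesis at index $i-1$. For $O_{\lambda\x}$ with $i\geqslant 2$, the analogous re-indexing $k=j+1$ turns the goal into $\mathcal{A}_{i+1}\subseteq\bigcup_{k\geqslant i+1}\mathcal{B}_k$, again the hypothesis (at index $i+1$).

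The only index needing real attention is $i=1$ for $O_{\lambda\x}$. Here I would first compute the target, $\bigcup_{j\geqslant 1}O_{\lambda\x}(\mathcal{B})_j=(\mathcal{B}_1\setminus\{\x\})\cup\bigcup_{k\geqslant 2}\mathcal{B}_k$, and then split the source $(\mathcal{A}_1\setminus\{\x\})\cup\mathcal{A}_2$ into two parts. The inclusion $\mathcal{A}_2\subseteq\bigcup_{k\geqslant 2}\mathcal{B}_k$ is the hypothesis at index $2$. For the other part I would argue pointwise: given $v\in\mathcal{A}_1$ with $v\neq\x$, the hypothesis at index $1$ places $v$ in some $\mathcal{B}_j$ with $j\geqslant 1$; if $j=1$ then $v\neq\x$ keeps $v$ in $\mathcal{B}_1\setminus\{\x\}$, and otherwise $v\in\bigcup_{k\geqslant 2}\mathcal{B}_k$, so in every case $v$ lands in the target.

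I expect the main (indeed the only) obstacle to be precisely this interaction between the set-difference $\setminus\{\x\}$ appearing in the first component of $O_{\lambda\x}$ and the removal of $\x$ from $\mathcal{B}_1$ inside the target: the essential observation is that discarding $\x$ from $\mathcal{A}_1$ can never force an element to be accounted for only as $\x$ in $\mathcal{B}_1$, so the inclusion survives the deletion. Every remaining step is routine bookkeeping with index shifts, and monotonicity with respect to $\sqsubseteq$ follows immediately once these inclusions hold for all $i$.
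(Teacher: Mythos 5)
Your proof is correct and is precisely the componentwise verification that the paper compresses into ``The proof is straightforward'': the index shifts for $O_{\pi}$ and for $O_{\lambda \x}$ at $i\geqslant 2$ are routine, and you correctly isolate and resolve the one delicate point, namely that an element of $\mathcal{A}_1\setminus\{\x\}$ landing in $\mathcal{B}_1$ under the hypothesis survives the deletion of $\x$ because it is itself distinct from $\x$. Nothing further is needed.
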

\begin{proof}The proof is straightforward.
\end{proof}
\begin{lemma}$\mathcal{A}\cup \mathcal{B}$ is monotone in both arguments with respect
to $\sqsubseteq$.
\end{lemma}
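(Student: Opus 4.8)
The plan is to prove the joint statement: whenever $\mathcal{A}\sqsubseteq\mathcal{A}'$ and $\mathcal{B}\sqsubseteq\mathcal{B}'$, we have $\mathcal{A}\cup\mathcal{B}\sqsubseteq\mathcal{A}'\cup\mathcal{B}'$. (Monotonicity in each argument separately is the special case where one of the two inequalities is an equality, and the joint form follows from those by transitivity of $\sqsubseteq$, so it suffices to establish the joint form directly.) First I would unfold the definition of $\sqsubseteq$: the goal is to show, for every $i\geqslant 1$, that $(\mathcal{A}\cup\mathcal{B})_i\subseteq\bigcup_{j\geqslant i}(\mathcal{A}'\cup\mathcal{B}')_j$. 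Since $\cup$ on these sequences is defined componentwise, the left-hand side is just $\mathcal{A}_i\cup\mathcal{B}_i$.

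The one genuine step is a routine set-theoretic distribution law: the infinite union $\bigcup_{j\geqslant i}$ commutes with the binary componentwise union, so that
$$\bigcup_{j\geqslant i}(\mathcal{A}'\cup\mathcal{B}')_j=\bigcup_{j\geqslant i}(\mathcal{A}'_j\cup\mathcal{B}'_j)=\Bigl(\bigcup_{j\geqslant i}\mathcal{A}'_j\Bigr)\cup\Bigl(\bigcup_{j\geqslant i}\mathcal{B}'_j\Bigr).$$
Once the right-hand side is in this form, the hypotheses apply directly: from $\mathcal{A}\sqsubseteq\mathcal{A}'$ we get $\mathcal{A}_i\subseteq\bigcup_{j\geqslant i}\mathcal{A}'_j$, and from $\mathcal{B}\sqsubseteq\mathcal{B}'$ we get $\mathcal{B}_i\subseteq\bigcup_{j\geqslant i}\mathcal{B}'_j$. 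Taking the union of these two containments gives $\mathcal{A}_i\cup\mathcal{B}_i\subseteq\bigl(\bigcup_{j\geqslant i}\mathcal{A}'_j\bigr)\cup\bigl(\bigcup_{j\geqslant i}\mathcal{B}'_j\bigr)$, which is exactly the desired inclusion.

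There is no real obstacle here; the whole argument is a short chain of elementary set inclusions, and the only thing worth stating explicitly is the distribution of the infinite union over the binary union, which is immediate from the definition of union. Consequently I would expect the author's own proof of this lemma to read simply ``The proof is straightforward,'' matching the style of the neighbouring monotonicity lemma for $O_{\lambda\x}$ and $O_\pi$. If one wanted to be fully explicit, the three displayed containments above constitute the entire proof.
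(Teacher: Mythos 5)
Your proof is correct and is exactly the routine unfolding the paper has in mind: the paper's own proof of this lemma is just ``The proof is straightforward,'' and your three displayed containments (componentwise definition of $\cup$, distribution of $\bigcup_{j\geqslant i}$ over binary union, then applying the two hypotheses) are the straightforward argument written out in full. No discrepancy to report.
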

\begin{proof}The proof is straightforward.
\end{proof}
\begin{corollary}$O_s$ is monotone  with respect to $\sqsubseteq$ for any $s$.
\end{corollary}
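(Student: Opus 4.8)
The plan is to prove the statement by structural induction on the substitution $s$, following the grammar of Definition~\ref{pervoe}. The operator $O_s$ is defined by clauses $(vi)$--$(ix)$ of Definition~\ref{FV}, with one clause for each way of building a substitution, so each case of the induction treats exactly one clause and, in the two compound cases, appeals to the induction hypothesis on the structurally smaller substitution(s).

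First I would dispatch the base cases. If $s$ is $id$, then by clause $(vi)$ the operator $O_{id}$ is the identity on sequences of sets and is therefore trivially monotone with respect to $\sqsubseteq$. If $s$ is $\pi_{\x}$, then $O_{\pi_{\x}}$ equals $O_{\pi}$, which is monotone with respect to $\sqsubseteq$ by the first of the two lemmas immediately preceding the corollary.

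For the compound cases I would use the induction hypothesis together with those two lemmas and the elementary remark that a composition of $\sqsubseteq$-monotone operators is again $\sqsubseteq$-monotone. If $s$ is $s'\ci q$, then clause $(viii)$ gives $O_{s'\ci q}(\mathcal{A})=O_{s'}(O_{q}(\mathcal{A}))$; both $O_{s'}$ and $O_{q}$ are monotone by the induction hypothesis, so from $\mathcal{A}\sqsubseteq\mathcal{B}$ we first get $O_{q}(\mathcal{A})\sqsubseteq O_{q}(\mathcal{B})$ and then $O_{s'}(O_{q}(\mathcal{A}))\sqsubseteq O_{s'}(O_{q}(\mathcal{B}))$. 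If $s$ is $\la q\mi N\is \x\ra$, then clause $(ix)$ gives $O_{\la q\mi N\is \x\ra}(\mathcal{A})=O_{q}(O_{\lambda \x}(\mathcal{A}))\cup FV(N)$; here $O_{\lambda \x}$ is monotone by the first preceding lemma and $O_{q}$ is monotone by the induction hypothesis, so $\mathcal{A}\sqsubseteq\mathcal{B}$ yields $O_{q}(O_{\lambda \x}(\mathcal{A}))\sqsubseteq O_{q}(O_{\lambda \x}(\mathcal{B}))$, and taking the union with the fixed sequence $FV(N)$ preserves this because $\mathcal{A}\cup\mathcal{B}$ is monotone in each argument separately, by the second preceding lemma.

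This is a routine structural induction, and I do not expect a genuine obstacle. The only place where the full preparatory work is needed is the case $\la q\mi N\is \x\ra$: there one must combine $\sqsubseteq$-monotonicity of $O_{\lambda \x}$ (not merely $\subseteq$-monotonicity, which is why the first preceding lemma is restated for $\sqsubseteq$) with monotonicity of $\cup$ in its first argument while $FV(N)$ is held fixed, which is precisely the content of the second preceding lemma.
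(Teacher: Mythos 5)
Your proof is correct and is exactly the argument the paper intends: the corollary is stated without proof precisely because it follows by this routine structural induction on $s$ (clauses $(vi)$--$(ix)$ of the definition of $O_s$), using the two immediately preceding lemmas on $\sqsubseteq$-monotonicity of $O_{\lambda\x}$, $O_{\pi}$, and $\cup$. Your case analysis, including the observation that the cons case is where both lemmas are genuinely needed, matches the paper's implicit reasoning.
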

Recall that $O_s$ is also monotone   with respect to $\subseteq$
for any $s$.
\begin{lemma}\label{subfreevar}
If $M_1\arrow M_2$, then $FV(M_2)\sqsubseteq FV(M_1)$. If
$s_1\arrow s_2$, then $O_{s_2}(\mathcal{A})\sqsubseteq
O_{s_1}(\mathcal{A})$ for any $\mathcal{A}$.
\end{lemma}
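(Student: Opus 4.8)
The plan is to prove both clauses simultaneously by induction on the derivation of the one-step reduction, that is, on the compatible-closure rules of Definition~\ref{lambdapi} together with the base rewrite rules. The two clauses are genuinely mutual: a term reduction may occur inside a substitution and a substitution reduction inside a term, so the induction hypothesis must be kept available in both forms. The engine that drives every inductive (congruence) step is monotonicity with respect to $\sqsubseteq$: I would quote that $O_{\lambda\x}$ and $O_{\pi}$ are $\sqsubseteq$-monotone, that $\mathcal{A}\cup\mathcal{B}$ is $\sqsubseteq$-monotone in both arguments, and hence that $O_s$ is $\sqsubseteq$-monotone for every $s$. I would also record two trivial facts used repeatedly: that $\mathcal{A}\subseteq\mathcal{B}$ implies $\mathcal{A}\sqsubseteq\mathcal{B}$, and that $\mathcal{A}\sqsubseteq\mathcal{C}$ together with $\mathcal{B}\sqsubseteq\mathcal{C}$ yields $\mathcal{A}\cup\mathcal{B}\sqsubseteq\mathcal{C}$, since $(\mathcal{A}\cup\mathcal{B})_i=\mathcal{A}_i\cup\mathcal{B}_i$.

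For the congruence cases there is essentially nothing to compute. For example, from $M_1\arrow M_2$ and the term hypothesis $FV(M_2)\sqsubseteq FV(M_1)$ one gets $FV(\lambda\x.M_2)=O_{\lambda\x}(FV(M_2))\sqsubseteq O_{\lambda\x}(FV(M_1))=FV(\lambda\x.M_1)$; the contexts $M_1N$, $s\ci M_1$, $\langle s_1\mi N\is\x\rangle$, $s_1\ci q$, $s\ci q_1$ and their symmetric partners are handled identically, each time applying the appropriate operator ($O_s$, $O_{\lambda\x}$, a union, or a composition) to both sides and invoking the relevant monotonicity, with the performed reduction fed into the mutual induction hypothesis. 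Where a union appears (the $MN$ and $\langle s\mi N\is\x\rangle$ contexts) I would combine the hypothesis on the reduced component with reflexivity on the untouched one.

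For the base cases I would unfold Definition~\ref{FV} (and Definition~\ref{uparrow} where $\Uparrow$ occurs) on both sides. A pleasant observation is that most rules preserve $FV$ \emph{exactly}: $Clos$ and $Ass$ by $O_{s\ci q}=O_s\ci O_q$ (for $Clos$ this is Lemma~\ref{corollary23}), $Beta$ by Lemma~\ref{corollary3} together with $O_{id}(\mathcal{A})=\mathcal{A}$, the rules $IdR,IdShift,IdVar$ immediately, and $Abs$ and $\alpha_1$ by the telescoping identity $O_{\lambda\z}\bigl(O_{\pi_{\z}}(\mathcal{C})\cup FV(\z)\bigr)=\mathcal{C}$ (applied with the relevant bound variable), so that $\alpha_1$ preserves $FV$ independently of its side condition $(*)$. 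For $App$ and $Map$ the reduct distributes a single $O_s$ across a union; here I need only the one inclusion $O_s(\mathcal{A})\cup O_s(\mathcal{B})\sqsubseteq O_s(\mathcal{A}\cup\mathcal{B})$, which follows from $\sqsubseteq$-monotonicity of $O_s$ applied to $\mathcal{A}\subseteq\mathcal{A}\cup\mathcal{B}$ and $\mathcal{B}\subseteq\mathcal{A}\cup\mathcal{B}$ and then recombined by the union fact above. The genuinely lossy rules $ConsVar$, $ConsShift$, $New$ merely drop a summand (either $FV(N)$ or the entire $s$-part), so there $O_{s_2}(\mathcal{A})\subseteq O_{s_1}(\mathcal{A})$ holds on the nose and $\sqsubseteq$ comes for free.

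The crux, and the very reason the statement is phrased with $\sqsubseteq$ rather than equality, lies in $\pi_1$ and $\pi_2$, which lower a level. For $\pi_1$ one computes $FV(\pi_{\x}\ci\y)=\langle\emptyset,\{\y\},\emptyset,\ldots\rangle$ against $FV(\y)=\langle\{\y\},\emptyset,\ldots\rangle$: the variable $\y$ has descended from level $2$ to level $1$, which is exactly the inclusion $\{\y\}\subseteq\bigcup_{j\geqslant 1}\mathcal{B}_j$ with $\mathcal{B}=\langle\emptyset,\{\y\},\emptyset,\ldots\rangle$ that $\sqsubseteq$ licenses (the prototype is Example~\ref{examplepi}) but that $\subseteq$ flatly forbids. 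For $\pi_2$ this same level shift $FV(\y)\sqsubseteq O_{\pi_{\x}}(FV(\y))$ must be transported through an arbitrary outer substitution $s$, so I would apply $\sqsubseteq$-monotonicity of $O_s$ to Example~\ref{examplepi} to obtain $O_s(FV(\y))\sqsubseteq O_s(O_{\pi_{\x}}(FV(\y)))$, i.e. $FV(s\ci\y)\sqsubseteq FV((s\ci\pi_{\x})\ci\y)$. This is the one place where both the ``may move to a higher index'' shape of $\sqsubseteq$ and the $\sqsubseteq$-monotonicity of $O_s$ are indispensable, and I expect it to be the main (though still short) obstacle.
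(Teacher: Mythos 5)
Your proof is correct and takes essentially the same route as the paper's: a case analysis over the rewrite rules in which the structural rules ($Beta$, $Abs$, $Clos$, $\alpha_1$, \ldots) preserve $FV$ exactly, $ConsVar$/$New$/$ConsShift$ drop a summand, $App$/$Map$ need only distribution of $O_s$ over a union, and $\pi_1$/$\pi_2$ are the genuine $\sqsubseteq$-cases, handled exactly as in the paper by the level shift of Example~\ref{examplepi} transported through the $\sqsubseteq$-monotonicity of $O_s$. The only difference is one of presentation: the paper works out representative base cases and compresses the compatible-closure part into the remark that the operations of Definition~\ref{pervoe} are monotone, which you spell out explicitly as a mutual induction on the reduction derivation.
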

\begin{proof}The proof is straightforward, but tedious. For example, consider\\
$
\begin{array}{lll}
\mathbf{(Abs)} & s\ci \lambda \x.M\rightarrow\lambda \x.\langle
\pi_{\x}\ci s\mi \x\is \x\rangle\ci M &
\end{array}
$\\
$\begin{array}{ll} FV(\lambda \x.\langle \pi_{\x}\ci s\mi \x\is
\x\rangle\ci M) &
\\
=FV(\lambda \x.((\pi_{\x}\ci
s)\ci\lambda \x.M)\x) & (Lemma~\ref{corollary3})\\
=FV(\lambda \x.(\pi_{\x}\ci
s\ci\lambda \x.M)\x) & (Lemma~\ref{corollary23})\\
=O_{\lambda \x}(O_{\pi}(FV
(s\ci\lambda \x.M))\bigcup FV(\x)) & \\
=O_{\lambda \x}(\la\{\x\},FV_1(s\ci\lambda \x.M),FV_2(s\ci\lambda
\x.M),\ldots\ra) & \\
=FV(s\ci\lambda \x.M) \end{array}$\\
 $\begin{array}{lll}
\mathbf{(App)} & s\ci MN \rightarrow(s\ci  M)(s\ci  N) &
\end{array}
$\\
 $\begin{array}{l}
FV(s\ci  MN) \\
=O_s(FV(MN))\\
=O_s(FV(M)\cup FV(N))\\
\supseteq
O_s(FV(M))\cup O_s(FV(N))\\
=FV(s\ci M)\cup FV(s\ci N)\\
=FV((s\ci M)(s\ci N)) \end{array}$\\
 $
\begin{array}{lll}
\mathbf{(ConsVar)} & \langle s\mi N\is \x\rangle\ci \x\rightarrow
N &
\end{array}
$\\
$\begin{array}{l}
 FV(\langle s\mi N\is \x\rangle\ci \x)\\
 =O_{\langle
s\mi N\is \x\rangle}(FV(\x))\\
=O_s(O_{\lambda \x}(FV(\x)))\cup FV(N)\\
\supseteq
FV(N)
\end{array}$\\
$
\begin{array}{lll}
\mathbf{(New)} & \langle s\mi N\is \x\rangle\ci \y\rightarrow s\ci
\y & (\x\neq \y)
\end{array}
$\\
$\begin{array}{l} FV(\langle s\mi N\is \x\rangle\ci
\y)\\
=O_{\langle s\mi N\is \x\rangle}(FV(\y))\\
=O_s(O_{\lambda \x}(FV(\y)))\cup FV(N)\\
\supseteq O_s(O_{\lambda \x}(FV(\y)))\\
=O_s(O_{\lambda
\x}(\langle\{\y\},\emptyset,\emptyset,\ldots\rangle)\\
=O_s(\langle\{\y\},\emptyset,\emptyset,\ldots\rangle)\\
=O_s(FV(\y))\\
=FV(s\ci \y)
\end{array}$\\ 
$\begin{array}{lll} \mathbf{(ConsShift)} & \langle s\mi N\is
\x\rangle\ci \pi_{\x}\rightarrow s &
\end{array}
$\\
$\begin{array}{l} O_{\langle s\mi N\is \x\rangle\ci
\pi_{\x}}(\mathcal{A})\\
=O_s(O_{\lambda \x}(O_{\pi}(\mathcal{A})))\cup FV(N)\\
\supseteq O_s(O_{\lambda
\x}(O_{\pi}(\mathcal{A})))\\
=O_s(O_{\lambda
\x}(\langle\emptyset,\mathcal{A}_1,\mathcal{A}_2,\ldots\rangle)\\
=O_s(\langle\mathcal{A}_1,\mathcal{A}_2,\ldots\rangle)\\
=O_s(\mathcal{A})
\end{array}$\\
$
\begin{array}{lll}
\mathbf{(Map)} & s\ci \langle q\mi N\is
\x\rangle\rightarrow\langle s\ci q \mi s\ci N\is \x\rangle &
\end{array}
$\\
$\begin{array}{l} O_{s\,\ci \langle q\mi N\is
\x\rangle}(\mathcal{A})\\
=O_s(O_{\langle q\mi N\is
\x\rangle}(\mathcal{A}))\\
=O_s(O_q(O_{\lambda \x}(\mathcal{A}))\cup FV(N))\\
\supseteq
O_s(O_q(O_{\lambda \x}(\mathcal{A})))\cup O_s(FV(N))\\
=O_{s\ci q}(O_{\lambda \x}(\mathcal{A}))\cup FV(s\ci
N)\\
=O_{\langle s\ci q \mi s\ci
N\is \x\rangle}(\mathcal{A})
\end{array}$\\
$
\begin{array}{lll}
\boldsymbol{(\pi_1)} & \pi_{\x}\ci \y\rightarrow \y & (\x\neq \y)
\end{array}
$\\
$\begin{array}{l}
FV(\pi_{\x}\ci
\y)\\
=\langle\emptyset,\{\y\},\emptyset,\emptyset,\ldots\rangle\\
\sqsupseteq
\langle\{\y\},\emptyset,\emptyset,\ldots\rangle\\
=FV(\y) \end{array}$
\\
$
\begin{array}{lll}
\boldsymbol{(\pi_2)} & (s\ci\pi_{\x})\ci \y\rightarrow s\ci \y &
(\x\neq \y)
\end{array}
$\\
$\begin{array}{l}
 FV((s\ci\pi_{\x})\ci
\y)\\
=O_s(O_{\pi}(FV(\y)))\\
=
O_s(\langle\emptyset,\{\y\},\emptyset,\emptyset,\ldots\rangle)\\
\sqsupseteq
O_s(\langle\{\y\},\emptyset,\emptyset,\ldots\rangle)\\
=FV(s\ci
\y)
\end{array}$\\
$\begin{array}{lll} \boldsymbol{(\alpha_1)} & \lambda
\x.M\rightarrow\lambda
\y.\langle\pi_{\y}\mi \y\is \x\rangle\ci M & (*) \end{array} $\\
$\begin{array}{ll} FV(\lambda \y.\langle\pi_{\y}\mi \y\is
\x\rangle\ci M )\\
=FV(\lambda \y.(\pi_{\y}\ci\lambda \x.M)\y) & (Lemma~\ref{corollary3})\\
=O_{\lambda \y}(O_{\pi}(FV(\lambda \x.M))\bigcup FV(\y)) & \\
=O_{\lambda \y}(\la\{\y\},FV_1(\lambda \x.M),FV_2(\lambda
\x.M),\ldots\ra) & \\
=FV(\lambda \x.M)
\end{array} $\\[3pt]
In addition, it is  necessary to prove that all operations from
Definition~\ref{pervoe} are in some sense monotone, but this is
not difficult.
\end{proof}
\begin{corollary}$FV(s\ci\lambda \x.M)=FV(\lambda
\x.\!\Uparrow_{\x}\!(s) \ci M)$
\end{corollary}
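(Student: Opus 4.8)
The plan is to read this corollary straight off the $(Abs)$ case already treated in the proof of Lemma~\ref{subfreevar}. First I would unfold the notation: by Definition~\ref{uparrow} (with empty prefix context) one has $\Uparrow_{\x}\!(s)\equiv\la\pi_{\x}\ci s\mi\x\is\x\ra$, so the right-hand side $\lambda\x.\!\Uparrow_{\x}\!(s)\ci M$ is literally $\lambda\x.\la\pi_{\x}\ci s\mi\x\is\x\ra\ci M$, i.e.\ the reduct of $s\ci\lambda\x.M$ under $(Abs)$. Thus the statement is just the equality $FV(s\ci\lambda\x.M)=FV(\lambda\x.\la\pi_{\x}\ci s\mi\x\is\x\ra\ci M)$.

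Next I would observe that, although Lemma~\ref{subfreevar} only asserts $FV(M_2)\sqsubseteq FV(M_1)$ for an arbitrary reduction $M_1\arrow M_2$, the displayed computation for the $(Abs)$ case is in fact a chain of honest equalities rather than inclusions. That computation rewrites $\la\pi_{\x}\ci s\mi\x\is\x\ra\ci M$ as $((\pi_{\x}\ci s)\ci\lambda\x.M)\x$ up to $FV$ by Lemma~\ref{corollary3}, drops the inner parentheses by Lemma~\ref{corollary23}, and then applies the defining clauses for $O_{\pi}$ and $O_{\lambda\x}$. The key cancellation is that $O_{\pi}$ shifts every level up by one while the appended argument $\x$ re-inserts $\x$ at level $1$, and $O_{\lambda\x}$ then deletes exactly that level-$1$ occurrence of $\x$ and merges level $2$ back into level $1$; the two operations undo each other, leaving precisely $FV(s\ci\lambda\x.M)$.

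Therefore the two sides have the same $FV$, and nothing beyond invoking that case is needed. I expect no genuine obstacle here: the only point worth flagging is that, unlike the other cases of Lemma~\ref{subfreevar}, the $(Abs)$ case produces an equality and not merely a $\sqsubseteq$, which is already manifest in its displayed derivation. A one-line proof citing Lemma~\ref{subfreevar} (the $(Abs)$ case) and Definition~\ref{uparrow} suffices.
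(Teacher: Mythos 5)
Your proposal is correct and matches the paper's (implicit) argument exactly: the corollary is stated right after Lemma~\ref{subfreevar} precisely because its $(Abs)$ case is a chain of genuine equalities (via Lemma~\ref{corollary3}, Lemma~\ref{corollary23}, and the cancellation of $O_{\pi}$ followed by the appended $\x$ against $O_{\lambda\x}$), and unfolding $\Uparrow_{\x}\!(s)\equiv\la\pi_{\x}\ci s\mi\x\is\x\ra$ shows the right-hand side is literally the $(Abs)$-reduct. Nothing further is needed.
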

\begin{corollary}
$FV(s\ci\lambda \Delta.M)=FV(\lambda \Delta.
\!\Uparrow_{\Delta}\!(s) \ci M)$
\end{corollary}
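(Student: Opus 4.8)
The plan is to prove the identity by induction on the length of the context $\Delta$, using the single-variable corollary stated just above as the engine of the induction step — exactly the strategy by which Lemma~\ref{lemma31} is deduced from Lemma~\ref{lemma1.2}. To make the induction go through I would state the hypothesis for \emph{all} terms $M$ (the substitution $s$ is left free as well), since the term sitting under the binders changes as the argument proceeds.

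For the base case $\Delta\equiv nil$ both sides collapse to $FV(s\ci M)$ directly from $\lambda\,nil.M\equiv M$ and $\Uparrow_{nil}(s)\equiv s$, so nothing needs to be computed. For the induction step I write $\Delta\equiv\Sigma,\x$ and proceed in three moves. First, rewriting the left-hand side by $\lambda\Sigma,\x.M\equiv\lambda\Sigma.(\lambda \x.M)$ and applying the induction hypothesis to $\Sigma$ with the inner term $\lambda \x.M$ gives
$$FV(s\ci\lambda\Sigma.(\lambda \x.M))=FV(\lambda\Sigma.\Uparrow_{\Sigma}\!(s)\ci\lambda \x.M).$$
Second, applying the single-variable corollary to the subterm $\Uparrow_{\Sigma}\!(s)\ci\lambda \x.M$ yields
$$FV(\Uparrow_{\Sigma}\!(s)\ci\lambda \x.M)=FV(\lambda \x.\Uparrow_{\x}\!(\Uparrow_{\Sigma}\!(s))\ci M).$$
Third, using the definitional identity $\lambda\Sigma.(\lambda \x.P)\equiv\lambda\Sigma,\x.P$ together with the note $\Uparrow_{\Sigma,\x}\!(s)\equiv\Uparrow_{\x}\!(\Uparrow_{\Sigma}\!(s))$ after Definition~\ref{uparrow}, the result is recognized as $FV(\lambda\Sigma,\x.\Uparrow_{\Sigma,\x}\!(s)\ci M)$, which is exactly the right-hand side of the claim for $\Delta\equiv\Sigma,\x$.

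The only step requiring care — and the main, though mild, obstacle — is the passage from the second display to the conclusion, namely replacing the inner term inside the common prefix $\lambda\Sigma$. This rests on the substitutivity fact that $FV(\lambda\Sigma.P)$ is determined by $FV(P)$ alone: by clause $(iii)$ of Definition~\ref{FV} one has $FV(\lambda \y.P)=O_{\lambda \y}(FV(P))$, so iterating over $\Sigma\equiv\y_1,\ldots,\y_n$ gives $FV(\lambda\Sigma.P)=O_{\lambda \y_1}(\cdots O_{\lambda \y_n}(FV(P))\cdots)$, a value depending on $P$ only through $FV(P)$. (Equivalently, each $O_{\lambda \y}$ is monotone by Lemma~\ref{monotone}, hence so is the composite, and equal inputs force equal outputs.) Chaining the three displays then delivers $FV(s\ci\lambda\Delta.M)=FV(\lambda\Delta.\Uparrow_{\Delta}\!(s)\ci M)$.
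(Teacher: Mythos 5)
Your proof is correct and is essentially the paper's (implicit) argument: the corollary is just the single-variable corollary iterated along $\Delta$, using $\Uparrow_{\Sigma,\x}\!(s)\equiv\,\Uparrow_{\x}\!\!(\Uparrow_{\Sigma}\!(s))$ and $\lambda\Sigma,\x.M\equiv\lambda\Sigma.(\lambda \x.M)$, exactly the pattern by which Lemma~\ref{lemma31} is obtained from Lemma~\ref{lemma1.2}. Your explicit check that $FV(\lambda\Sigma.P)$ depends on $P$ only through $FV(P)$ (via the clause $FV(\lambda \y.P)=O_{\lambda \y}(FV(P))$) merely makes precise a substitution step the paper leaves tacit.
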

\begin{corollary}If $M\arrow N$, then $\bigcup_{i\geqslant 1}FV_i(N)\subseteq \bigcup_{i\geqslant 1}FV_i(M)$.
\end{corollary}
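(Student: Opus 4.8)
The plan is to derive this immediately from the preceding Lemma~\ref{subfreevar} together with the definition of $\sqsubseteq$. First I would apply Lemma~\ref{subfreevar} to the hypothesis $M\arrow N$ to obtain $FV(N)\sqsubseteq FV(M)$. By definition this means precisely that $FV_i(N)\subseteq\bigcup_{j\geqslant i}FV_j(M)$ holds for every $i\geqslant 1$.

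The only remaining step is to pass from this level-by-level containment to a containment of the full unions. Take an arbitrary variable $v\in\bigcup_{i\geqslant 1}FV_i(N)$. Then $v\in FV_i(N)$ for some index $i\geqslant 1$, and the relation $FV(N)\sqsubseteq FV(M)$ gives $v\in FV_i(N)\subseteq\bigcup_{j\geqslant i}FV_j(M)\subseteq\bigcup_{j\geqslant 1}FV_j(M)$. Hence $v\in\bigcup_{j\geqslant 1}FV_j(M)$, which is exactly the desired conclusion. Equivalently, one may observe that
$$\bigcup_{i\geqslant 1}FV_i(N)\subseteq\bigcup_{i\geqslant 1}\Bigl(\bigcup_{j\geqslant i}FV_j(M)\Bigr)=\bigcup_{j\geqslant 1}FV_j(M),$$
the last equality holding because the inner union over $j\geqslant i$ is already a subset of the union over all $j\geqslant 1$, and the $i=1$ term recovers the whole union.

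There is essentially no obstacle here: the corollary is just the observation that the refined relation $\sqsubseteq$, which tracks levels and allows a free variable to migrate to a \emph{higher} level under reduction, still forbids the \emph{creation} of genuinely new free variables. The content of the statement lies entirely in Lemma~\ref{subfreevar}; the corollary merely forgets the level information. The proof is therefore straightforward and amounts to unfolding the definition of $\sqsubseteq$ and taking a union over all levels.
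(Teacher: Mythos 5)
Your proof is correct and is exactly the argument the paper intends: the corollary is stated without proof as an immediate consequence of Lemma~\ref{subfreevar}, namely unfolding the definition of $\sqsubseteq$ ($FV_i(N)\subseteq\bigcup_{j\geqslant i}FV_j(M)$ for all $i\geqslant 1$) and taking the union over all levels $i$. Nothing is missing; your write-up just makes the implicit step explicit.
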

 To prove that $\sigma\pi\alpha$ is strongly
normalizing, we consider the following two-sorted term rewriting
system $R$.
\begin{definition} The signature of $R$ contains:\\ $
\begin{array}{ll}
M,N,L,\ldots & \text{variables};\\
s,q,r,\ldots & \text{variables};\\
 x,y,z,\ldots & \text{constants};\\
id,\pi_x,\pi_y,\pi_z,\ldots &\text{constants};\\
\lambda x,\lambda y,\lambda z,\ldots & \text{functional symbols of
arity one};\\
\boldsymbol{\lambda} x,\boldsymbol{\lambda} y,\boldsymbol{\lambda}
z,\ldots
& \text{functional symbols of arity one};\\
\cdot\mi\circ & \text{functional symbols of arity two};\\
\la -\mi -\is x\ra,\la -\mi -\is y\ra,\la -\mi -\is z\ra,\ldots &
\text{functional symbols of arity two}.
\end{array}
$ \\[5pt]
 We
will omit $\cdot$, which denotes application. The sets of ground
terms and ground substitutions of $R$ are defined inductively as
follows:
\begin{align*}
M,N::&= \x \mid  MN \mid \lambda \x. M \mid\boldsymbol{\lambda} \x. M\mid s\ci M \\
s,q::&= id \mid \pi_{\x} \mid \langle s\mi  N\is \x \rangle \mid
s\ci q
\end{align*}
\end{definition}
 We will use the same abbreviations as in
Convention~\ref{notation2} and Convention~\ref{notation1}.
\newpage
\begin{definition}(The rewriting system $R$).\\[5pt]
$
\begin{array}{lll}
(Abs1) & s\ci \lambda \x.M\rightarrow\lambda \x.\langle
\pi_{\x}\ci s\mi \x\is
\x\rangle\ci M &\\
(Abs2) & s\ci \boldsymbol{\lambda}
\x.M\rightarrow\boldsymbol{\lambda} \x.\langle \pi_{\x}\ci s\mi
\x\is
\x\rangle\ci M &\\
(Abs3) & s\ci \lambda \x.M\rightarrow\boldsymbol{\lambda}
\x.\langle \pi_{\x}\ci s\mi \x\is
\x\rangle\ci M &\\
(Abs4) & s\ci \boldsymbol{\lambda} \x.M\rightarrow\lambda
\x.\langle \pi_{\x}\ci s\mi \x\is
\x\rangle\ci M &\\
(App) & s\ci  MN\rightarrow(s\ci  M)(s\ci  N) &\\
(ConsVar) & \langle s\mi N\is \x\rangle\ci \x\rightarrow N &\\
(New) & \langle s\mi N\is \x\rangle\ci \y\rightarrow  s\ci \y & (\x\neq \y)\\
(IdVar) &  id\ci \x\rightarrow \x &\\
(Clos) & s\ci q\ci M\rightarrow (s\ci q)\ci M &\\
(Ass) & s\ci q\ci  r\rightarrow (s\ci q)\ci r &\\
(IdR) & s\ci id\rightarrow s &\\
(IdShift) & id\ci \pi_{\x}\rightarrow\pi_{\x} &\\
(ConsShift) & \langle s\mi N\is \x\rangle\ci \pi_{\x}\rightarrow s &\\
(Map) & s\ci \langle q\mi N\is \x\rangle\rightarrow\langle
s\ci q \mi s\ci N\is \x\rangle &\\
(\pi_1) & \pi_{\x}\ci \y\rightarrow \y & (\x\neq \y)\\
(\pi_2) & (s\ci\pi_{\x})\ci \y\rightarrow s\ci \y & (\x \neq
\y)\\
(\alpha) & \boldsymbol{\lambda} \x.M\rightarrow\lambda
\y.\langle\pi_{\y}\mi \y\is \x\rangle\ci M &\\
(\xi) & \boldsymbol{\lambda} \x.M\arrow\lambda \x.M &
\end{array}
$
\end{definition}
\begin{definition} To each term $M$  we assign
$FV(M)$ as in Definition~\ref{FV} with the additional case: \\
$\begin{array}{l} FV(\boldsymbol{\lambda} \x.M)=FV(\lambda
\x.M)=O_{\lambda \x}(FV(M)) \end{array}$
\end{definition}
\begin{lemma}If $M_1\overset{R}{\arrow}M_2$, then $FV(M_2)\sqsubseteq
FV(M_1)$.
\end{lemma}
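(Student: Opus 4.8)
The plan is to reprove this by the same induction used for Lemma~\ref{subfreevar}, treating the bold binder $\boldsymbol{\lambda}$ and the two genuinely new rules $(\alpha)$ and $(\xi)$ as the only extra work. Exactly as in Lemma~\ref{subfreevar}, I would prove simultaneously the combined statement for both sorts: for terms, $M_1\overset{R}{\arrow}M_2$ gives $FV(M_2)\sqsubseteq FV(M_1)$; and for substitutions, $s_1\overset{R}{\arrow}s_2$ gives $O_{s_2}(\mathcal{A})\sqsubseteq O_{s_1}(\mathcal{A})$ for every $\mathcal{A}$. The substitution half is needed to push reductions through closures such as $s_1\ci M\arrow s_2\ci M$ in the compatible-closure step. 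The induction is on the derivation of the one-step reduction; the base cases are the individual rewrite rules and the inductive steps are the congruence rules.

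The observation that makes the whole argument collapse onto the earlier one is that $FV$ does not see the difference between the two binders: by definition $FV(\boldsymbol{\lambda}\x.M)=FV(\lambda\x.M)=O_{\lambda\x}(FV(M))$, so $O_{\boldsymbol{\lambda}\x}$ and $O_{\lambda\x}$ coincide. Hence the four rules $(Abs1)$--$(Abs4)$ give one and the same $FV$-computation, namely the one already carried out for the case $(Abs)$ in Lemma~\ref{subfreevar}: replacing $\lambda$ by $\boldsymbol{\lambda}$ on either side of the arrow leaves every $FV$-value unchanged. The rules $(App)$ through $(\pi_2)$ are verbatim the rules of $\sigma\pi\alpha$, so their verifications are copied from Lemma~\ref{subfreevar}. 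Rule $(\alpha)$ is handled precisely like $(\alpha_1)$ there: one computes
$$FV(\lambda\y.\la\pi_\y\mi \y\is \x\ra\ci M)=FV(\lambda\x.M)=FV(\boldsymbol{\lambda}\x.M),$$
so both sides have equal $FV$ and in particular $\sqsubseteq$ holds. Rule $(\xi)$ is immediate, since $FV(\boldsymbol{\lambda}\x.M)=FV(\lambda\x.M)$ by definition.

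For the congruence (inductive) step I would invoke the monotonicity facts established just above: $O_{\lambda\x}$ and $O_\pi$ are monotone with respect to $\sqsubseteq$, $\mathcal{A}\cup\mathcal{B}$ is monotone in both arguments with respect to $\sqsubseteq$, and therefore $O_s$ is monotone with respect to $\sqsubseteq$ for every $s$. Since every term- and substitution-former occurring in the definition of $FV$ ($MN$, $\lambda\x.M$, $\boldsymbol{\lambda}\x.M$, $s\ci M$, $\la s\mi N\is \x\ra$, and $s\ci q$) is built from these $\sqsubseteq$-monotone operations, feeding a $\sqsubseteq$-decrease obtained from the induction hypothesis through any one of them preserves $\sqsubseteq$, and the induction closes.

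The one place that truly needs attention --- though it is not deep --- is that here one must work with $\sqsubseteq$ rather than with the stronger componentwise $\subseteq$ of Section~3. This is forced by the base cases $(\pi_1)$ and $(\pi_2)$: they shift the level of a free occurrence (e.g.\ $FV(\pi_\x\ci \y)=\la\emptyset,\{\y\},\emptyset,\ldots\ra$ against $FV(\y)=\la\{\y\},\emptyset,\ldots\ra$), so only $\sqsubseteq$ survives. Consequently the congruence step must use the $\sqsubseteq$-monotonicity of the operators rather than their $\subseteq$-monotonicity; once that is in place the remainder is the routine rule-by-rule bookkeeping already performed for Lemma~\ref{subfreevar}.
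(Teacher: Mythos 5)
Your proposal is correct and follows exactly the route the paper intends: the paper's proof is just a citation of Lemma~\ref{subfreevar}, and your elaboration---that $FV(\boldsymbol{\lambda}\x.M)=FV(\lambda\x.M)$ by definition makes $(Abs1)$--$(Abs4)$, $(\alpha)$, and $(\xi)$ collapse onto the cases $(Abs)$ and $(\alpha_1)$ already verified there, with the remaining rules verbatim and the congruence step closed by $\sqsubseteq$-monotonicity of $O_s$---is precisely the content that citation compresses. Nothing is missing.
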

\begin{proof}See Lemma~\ref{subfreevar}.
\end{proof}
\begin{lemma}\label{restriction}The restriction $(*)$ in Definition~\ref{lambdapi} can be
written as\\
$
\begin{array}{ll}
(*) & \x \in \bigcup_{i\geqslant 1}FV_i(\lambda \x.M);\quad
\y\notin \bigcup_{i\geqslant 1}FV_i(\lambda \y.\langle\pi_{\y}\mi
\y\is \x\rangle\ci M
 )\\
 \end{array}
$
\end{lemma}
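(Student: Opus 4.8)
The plan is to observe that both stated versions of $(*)$ contain the identical first conjunct $\x \in \bigcup_{i\geqslant 1}FV_i(\lambda \x.M)$ and differ only in their second conjunct, which forbids $\y$ from occurring freely in a certain term. Thus the whole lemma reduces to proving that the two second conjuncts express the same condition, i.e. that
$$\bigcup_{i\geqslant 1}FV_i(\lambda \x.M) = \bigcup_{i\geqslant 1}FV_i(\lambda \y.\la\pi_{\y}\mi \y\is \x\ra\ci M).$$
Once this set equality is available, the conditions $\y\notin \bigcup_{i\geqslant 1}FV_i(\lambda \x.M)$ and $\y\notin \bigcup_{i\geqslant 1}FV_i(\lambda \y.\la\pi_{\y}\mi \y\is \x\ra\ci M)$ coincide verbatim, and the equivalence of the two forms of $(*)$ is immediate.

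To obtain the set equality I would prove the sharper level-by-level identity $FV(\lambda \y.\la\pi_{\y}\mi \y\is \x\ra\ci M) = FV(\lambda \x.M)$ of infinite sequences of sets, from which equality of the unions is clear. This computation is exactly the one already performed inside the proof of Lemma~\ref{subfreevar} for the case $(\alpha_1)$: by Lemma~\ref{corollary3} the term $\la\pi_{\y}\mi \y\is \x\ra\ci M$ has the same $FV$ as $(\pi_{\y}\ci\lambda \x.M)\y$, and unfolding $O_{\lambda \y}$ and $O_{\pi}$ gives
$$FV(\lambda \y.\la\pi_{\y}\mi \y\is \x\ra\ci M) = O_{\lambda \y}(\la\{\y\},FV_1(\lambda \x.M),FV_2(\lambda \x.M),\ldots\ra) = FV(\lambda \x.M),$$
the last equality holding because $O_{\lambda \y}$ deletes $\y$ from the first component (leaving $\emptyset$) and then prepends the second. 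I would cite that computation rather than repeat it, noting only that it is purely formal and never uses $\x\neq \y$, so no case split on whether $\x$ and $\y$ coincide is required.

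I do not expect a genuine obstacle: the entire content is the $FV$-invariance of the $\alpha_1$ rewrite, which is already established, and the passage from the graded equality to the equality of total free-variable sets is trivial. The only point that warrants a moment of care is bookkeeping: one must check that it is precisely the second conjunct that is being rewritten while the first conjunct is held fixed, so that the equivalence of the two restrictions really does reduce to the equality of the two sets against which $\y$ is tested.
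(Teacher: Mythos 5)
Your proposal is correct and follows essentially the same route as the paper: the paper's proof is precisely the one-line observation that $FV(\lambda \y.\langle\pi_{\y}\mi \y\is \x\rangle\ci M)=FV(\lambda \x.M)$ by the case $(\alpha_1)$ of Lemma~\ref{subfreevar}, which is exactly the computation (via Lemma~\ref{corollary3} and unfolding $O_{\pi}$ and $O_{\lambda \y}$) that you cite. Your added remarks, that only the second conjunct changes and that the computation nowhere needs $\x\neq\y$, are accurate bookkeeping on top of the same argument.
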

\begin{proof}$FV(\lambda \y.\langle\pi_{\y}\mi \y\is
\x\rangle\ci M )=FV(\lambda \x.M) $ by Lemma~\ref{subfreevar} (the
case $\alpha_1$).
\end{proof}
\begin{definition}By $M^*$ denote the term $M$ in which all sub-terms of
the shape $\lambda \x.L$, such that $\x\in \bigcup_{i\geqslant
1}FV_i(\lambda \x.L)$, are replaced  by $\boldsymbol{\lambda}
\x.L$.
\end{definition}
\begin{theorem}If $R$ is strongly normalizing on the sets of ground terms and ground substitutions, then $\sigma\pi\alpha$ is strongly normalizing (on the sets of terms, substitutions, and judgements of
the form $\Gamma\vdash M$).
\end{theorem}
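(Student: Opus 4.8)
The plan is to transfer strong normalization from $R$ to $\sigma\pi\alpha$ along the decoration $M\mapsto M^{*}$ (and the analogous $s\mapsto s^{*}$ on substitutions), showing that \emph{every single} $\sigma\pi\alpha$-step is simulated by at least one $R$-step on the decorated terms. To a judgement I attach the ground term $(\lambda\Gamma.M)^{*}$, which is consistent with $FV(\Gamma\vdash M)=FV(\lambda\Gamma.M)$. Thus it suffices to prove a \emph{Simulation Lemma}: if $M_1\arrow M_2$ then $M_1^{*}\overset{R}{\twoh}M_2^{*}$ in at least one step, and likewise if $s_1\arrow s_2$ then $s_1^{*}\overset{R}{\twoh}s_2^{*}$ in at least one step. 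Granting this, an infinite $\sigma\pi\alpha$-reduction on terms or substitutions yields an infinite $R$-reduction on ground terms or ground substitutions, contradicting the assumed strong normalization of $R$; the case of judgements is reduced to the term case below. Hence no infinite $\sigma\pi\alpha$-reduction exists on any of the three sets.

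I would prove the Simulation Lemma by induction on the derivation of $M_1\arrow M_2$ (resp. $s_1\arrow s_2$), i.e. on the compatible-closure rules, with the rewrite axioms as base cases. The decisive observation is \emph{monotonicity of free variables}: by Lemma~\ref{subfreevar} and its corollary a reduction can only shrink $\bigcup_{i\geqslant 1}FV_i$, so passing from the redex to its contractum an abstraction can only change from $\boldsymbol{\lambda}\x$ to $\lambda\x$, never the reverse. For the base cases: every axiom other than $(Abs)$ and $(\alpha_1)$ contains no top-level abstraction, so decoration commutes with it and the same-named $R$-rule applies verbatim (with the metavariables instantiated by decorated terms). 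For $(Abs)$, the decoration of $s\ci\lambda\x.M$ carries $\lambda\x$ or $\boldsymbol{\lambda}\x$ according to whether $\x\in\bigcup FV_i(\lambda\x.M)$, while the contractum carries $\lambda\x$ or $\boldsymbol{\lambda}\x$ according to whether $\x\in\bigcup FV_i(s\ci\lambda\x.M)$; the four rules $(Abs1)$–$(Abs4)$ were introduced precisely to cover all four combinations in exactly one $R$-step. For $(\alpha_1)$ the side condition $(*)$ forces $\x\in\bigcup FV_i(\lambda\x.M)$, so $(\lambda\x.M)^{*}\equiv\boldsymbol{\lambda}\x.M^{*}$ and the $R$-rule $(\alpha)$ applies; by Lemma~\ref{restriction} the fresh $\y$ is not free in $\lambda\y.\langle\pi_{\y}\mi\y\is\x\rangle\ci M$, so its decoration is a plain $\lambda\y$, matching the right-hand side of $(\alpha)$. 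For the inductive (context) cases the induction hypothesis supplies the inner reduction; the only discrepancy is that an enclosing $\boldsymbol{\lambda}\x$ of the redex-version may need to become $\lambda\x$ in the contractum-version, and by monotonicity this is the \emph{only} possible direction, so finitely many applications of the rule $(\xi):\boldsymbol{\lambda}\x.M\arrow\lambda\x.M$ repair the decoration.

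For judgements, a $\ri$-step is either a compatible-closure step $\Gamma\vdash M\ri\Gamma\vdash M'$ coming from $M\arrow M'$, in which case $\lambda\Gamma.M\arrow\lambda\Gamma.M'$ and the Simulation Lemma applies, or an $(\alpha_2)$-step on $\Gamma,\x,\Delta\vdash M$. In the latter case I would use that $\lambda(\Gamma,\x,\Delta).M$ reduces by one $(\alpha_1)$-step followed by the $(Abs)$-steps of Lemma~\ref{lemma31} to $\lambda\Gamma.\lambda\y.\lambda\Delta.\Uparrow_{\Delta}\!\langle\pi_{\y}\mi\y\is\x\rangle\ci M$; the restriction $(**)$ is exactly the instance of $(*)$ needed for that $(\alpha_1)$-step, since $FV(\x,\Delta\vdash M)=FV(\lambda\x.\lambda\Delta.M)$. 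Applying the Simulation Lemma to this term reduction gives the required positive-length $R$-reduction between the decorated judgements. The main obstacle I anticipate is purely the star-bookkeeping in the surrounding context: verifying that at each axiom the decoration of the contractum agrees with the right-hand side of the chosen $R$-rule, and that the only mismatches created in enclosing abstractions are star-losses that $(\xi)$ can absorb — this is where monotonicity of $FV$ under $\arrow$ does all the work and guarantees each $\sigma\pi\alpha$-step costs at least one genuine $R$-step.
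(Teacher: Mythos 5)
Your proposal is correct and follows essentially the same route as the paper: the star decoration $M^{*}$, simulation of each $\sigma\pi\alpha$-step by the same-named $R$-rule (with $Abs1$--$Abs4$ covering the four $FV$ combinations, $\alpha$ for $\alpha_1$, and $\xi$-steps absorbing star-losses justified by Lemma~\ref{subfreevar}), and the translation of judgement sequences via $\lambda\Gamma.M$ with $\alpha_2$ replaced by $\alpha_1$ followed by $Abs$. Your write-up is in fact somewhat more explicit than the paper's on two points it leaves implicit: isolating a Simulation Lemma proved by induction on the compatible closure, and insisting that each simulated step costs at least one genuine $R$-step so that the resulting $R$-sequence is truly infinite.
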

\begin{proof}Suppose we have some infinite $\sigma\pi\alpha$-sequence
$$ M_1\arrow M_2\arrow\ldots\arrow M_n\arrow\ldots$$
 I claim that we can get
some
infinite  $R$-sequence\\
$$(M_1)^*\overset{R}{\twoh}(M_2)^*\overset{R}{\twoh}\ldots\overset{R}{\twoh}(M_n)^*\overset{R}{\twoh}\ldots$$
The proof is by induction over $n$. If $n$ is equal to $1$, there is nothing to prove. Else there are three cases.\\[5pt]
1)  If the reduction step $ M_n\arrow M_{n+1}$ is not
$Abs,\alpha_1,\alpha_2$, we can apply the\\ $R$-reduction of the
same name $(M_n)^*\arrow( M_{n+1})^*$, but then might need several
$\xi$-steps, because $ConsVar, New, ConsShift,\pi_1,\pi_2$ can
decrease $FV$. If any of these reductions is applied under some
black lambda, this lambda may
turn pale. See Example~\ref{example1.18}.\\[5pt]
2) If  $M_n\arrow_{\alpha_1} M_{n+1}$,
we can apply $\alpha$:\\
$(M_n)^*\arrow_{\alpha}(M_{n+1})^*$ (see Lemma~\ref{restriction}
and
Example~\ref{example1.13}).\\[5pt]
3) If $ M_n\arrow_{Abs} M_{n+1}$ and the $Abs$-redex is
$s\ci\lambda \x.M$, there are four
possible subcases:\\
$
\begin{array}{lll}
Subcase \,1. & \x\notin FV_1(\lambda \x.M),\, \x\notin
FV_1(\lambda
\x.\langle \pi_{\x}\ci s\mi \x\is \x\rangle\ci M);\\
Subcase\, 2. & \x\in FV_1(\lambda \x.M),\, \x\in FV_1(\lambda
\x.\langle \pi_{\x}\ci s\mi \x\is \x\rangle\ci M);\\
Subcase\, 3. & \x\notin FV_1(\lambda \x.M),\, \x\in FV_1(\lambda
\x.\langle \pi_{\x}\ci s\mi \x\is \x\rangle\ci M);\\
Subcase \,4. & \x\in FV_1(\lambda \x.M),\, \x\notin FV_1(\lambda
\x.\langle \pi_{\x}\ci s\mi \x\is \x\rangle\ci M);
\end{array}\\[3pt]
$ and we can apply $Abs_1, Abs_2, Abs_3$, and $Abs_4$,
respectively. See Examples~\ref{example1.14}, \ref{example1.15},
\ref{example1.16}, and \ref{example1.17}.

The proof is similar for substitutions. For judgements, suppose we
have some infinite $\sigma\pi\alpha$-sequence
$$\Gamma_1\vdash M_1\ri\Gamma_2\vdash  M_2\ri\Gamma_3\vdash M_3\ri\ldots$$
We can obtain the $\sigma\pi\alpha$-sequence of terms
$$\Lambda\Gamma_1.M_1\twoh\Lambda\Gamma_2.M_2\twoh\Lambda\Gamma_3.M_3\twoh\ldots$$
where $\alpha_2$-steps are replaced by $\alpha_1$ and $Abs$.
\end{proof}
\begin{example}\label{example1.18}The $\sigma\pi\alpha$-sequence
$$\lambda x.\la id\mi\pi_x\ci x\is x\ra\ci y\arrow_{New}\lambda
x.id\ci y\arrow\ldots$$
 becomes the following $R$-sequence
$$\boldsymbol{\lambda} x.\la id\mi\pi_x\ci x\is x\ra\ci
y\arrow_{New} \boldsymbol{\lambda} x.id\ci y \arrow_{\xi}\lambda
x.id\ci y\arrow\ldots$$
\end{example}
\begin{example}\label{example1.13}The $\sigma\pi\alpha$-sequence
 $$\lambda x.\pi_x\ci
 x\arrow_{\alpha_1}\lambda y.\la\pi_y\mi y\is
 x\ra\ci\pi_x\ci x\arrow\ldots$$
  becomes the following  $R$-sequence
 $$\boldsymbol{\lambda} x.\pi_x\ci x\arrow_{\alpha}\lambda y.\la\pi_y\mi y\is
 x\ra\ci\pi_x\ci x\arrow\ldots$$
 \end{example}
\begin{example}\label{example1.14}The $\sigma\pi\alpha$-sequence
$$id\ci\lambda x.x\arrow_{Abs}\lambda x.\la\pi_x\ci id\mi x\is
x\ra\ci x\arrow\ldots$$
 becomes the following  $R$-sequence
$$ id\ci\lambda x.x\arrow_{Abs1}\lambda x.\la\pi_x\ci id\mi x\is
x\ra\ci x\arrow\ldots$$
\end{example}
\begin{example}\label{example1.15}The $\sigma\pi\alpha$-sequence
$$id\ci\lambda x.\pi_x\ci x\arrow_{Abs}\lambda x.\la\pi_x\ci id\mi
x\is x\ra\ci\pi_x\ci x\arrow\ldots$$
 becomes the following
$R$-sequence
$$id\ci\boldsymbol{\lambda} x.\pi_x\ci
x\arrow_{Abs2}\boldsymbol{\lambda} x.\la\pi_x\ci id\mi x\is
x\ra\ci\pi_x\ci x\arrow\ldots$$
\end{example}
\begin{example}\label{example1.16}The $\sigma\pi\alpha$-sequence
$$\la id\mi x\is y\ra\ci\lambda x.x\arrow_{Abs}\lambda
x.\la\pi_x\ci\la id\mi x\is y\ra\mi x\is x\ra\ci x\arrow\ldots$$
becomes the following  $R$-sequence
$$\la id\mi x\is
y\ra\ci\lambda x.x\arrow_{Abs3} \boldsymbol{\lambda}
x.\la\pi_x\ci\la id\mi x\is y\ra\mi x\is x\ra\ci x\arrow\ldots$$
\end{example}
\begin{example}\label{example1.17}The $\sigma\pi\alpha$-sequence
$$\la id\mi\lambda y.y\is x\ra\ci\lambda x.\pi_x\ci
x\arrow_{Abs}\lambda x.\la\pi_x\ci\la id\mi\lambda y.y\is x\ra\mi
x\is x\ra\ci\pi_x\ci x\arrow\ldots$$
 becomes the following
$R$-sequence
$$\la id\mi\lambda y.y\is x\ra\ci\boldsymbol{\lambda}
x.\pi_x\ci x\arrow_{Abs4}\lambda x.\la\pi_x\ci\la id\mi\lambda
y.y\is x\ra\mi x\is x\ra\ci\pi_x\ci x\arrow\ldots$$
\end{example}
To prove that $R$ is strongly normalizing on the sets of ground
terms and ground substitutions, we use the method of semantic
labelling. See~\cite{Zantema}.
\begin{definition}
To each term $M$ and each substitution $s$ we put in correspondence  natural numbers  $|M|$ and $|s|$ respectively defined as follows:\\
$\begin{array}{l}
|\lambda \x.M|=|M|+1\\
|\boldsymbol{\lambda} \x.M|=|M|+1\\
|s\ci M|=|s|+|M|\\
|s\ci
q|=|s|+|q|\\
|MN|=max(|M|,|N|)\\
|\la s\mi N\is
\x\ra|=max(|s|,|N|)\\
|id|=0\\
|\pi_{\x}|=0\\
 |\x|=0
 \end{array}$
\end{definition}
Note that any functional symbol of $R$ now turns to some
 monotone function of $\mathbb{N}$ to
$\mathbb{N}$ or of $\mathbb{N}\times\mathbb{N}$ to $\mathbb{N}$.
Consider  the following two-sorted term rewriting system $Q$.
\begin{definition} The
signature
of $Q$ contains:\\
$
\begin{array}{ll}
M,N,L,\ldots & \text{variables};\\
s,q,r,\ldots & \text{variables};\\
x,y,z,\ldots & \text{constants};\\
id,\pi_x,\pi_y,\pi_z,\ldots &\text{constants};\\
\lambda x,\lambda y,\lambda z,\ldots & \text{functional symbols of
arity one};\\
\boldsymbol{\lambda}_i x,\boldsymbol{\lambda}_i
y,\boldsymbol{\lambda}_i z,\ldots
& \text{functional symbols of arity one};\\
\cdot\mi\circ_i & \text{functional symbols of arity two};\\
\la -\mi -\is x\ra,\la -\mi -\is y\ra,\la -\mi -\is z\ra,\ldots &
\text{functional symbols of arity two};
\end{array}
$ \\[5pt]
where $i\in \mathbb{N},i\geqslant 0$.\\[5pt]
 We will
omit $\cdot$, which denotes application. The sets of ground terms
and ground substitutions of $Q$ are defined inductively as
follows:
\begin{align*}
M,N::&= \x \mid  MN \mid \lambda \x. M \mid\boldsymbol{\lambda}_i \x. M\mid s\ci_i M \\
s,q::&= id \mid \pi_{\x} \mid \langle s\mi  N\is \x \rangle \mid
s\ci_i q
\end{align*}
\end{definition}
 We will use the same abbreviations as in
Convention~\ref{notation2} and Convention~\ref{notation1}.
\newpage
\begin{definition}(The rewriting system $Q$).\\[5pt]
$
\begin{array}{lll}
(Abs1) & s\ci_{i+1} \lambda \x.M\rightarrow\lambda \x.\langle
\pi_{\x}\ci_k s\mi \x\is
\x\rangle\ci_i M & (i \geqslant k)\\
(Abs2) & s\ci_{i+1} \boldsymbol{\lambda}_{j+1}
\x.M\rightarrow\boldsymbol{\lambda}_{i+1} \x.\langle \pi_{\x}\ci_k
s\mi \x\is
\x\rangle\ci_i M & (i =j+k)\\
(Abs3) & s\ci_{i+1 }\lambda
\x.M\rightarrow\boldsymbol{\lambda}_{i+1} \x.\langle \pi_{\x}\ci_k
s\mi \x\is
\x\rangle\ci_i M & (i \geqslant k)\\
(Abs4) & s\ci_{i+1} \boldsymbol{\lambda}_{j+1}
\x.M\rightarrow\lambda \x.\langle \pi_{\x}\ci_k s\mi \x\is
\x\rangle\ci_i M & (i=j+k)\\
(App) & s\ci_i MN\rightarrow(s\ci_j  M)(s\ci_k  N) & (i\geqslant j, i\geqslant k)\\
(ConsVar) & \langle s\mi N\is \x\rangle\ci_i \x\rightarrow N &\\
(New) & \langle s\mi N\is \x\rangle\ci_i \y\rightarrow  s\ci_j \y & (\x\neq \y, i\geqslant j)\\
(IdVar) &  id\ci_0 \x\rightarrow \x &\\
(Clos) & s\ci_{i+j+k} q\ci_{j+k} M\rightarrow (s\ci_{i+j} q)\ci_{i+j+k} M &\\
(Ass) & s\ci_{i+j+k} q\ci_{j+k}  r\rightarrow (s\ci_{i+j} q)\ci_{i+j+k} r &\\
(IdR) & s\ci_i id\rightarrow s &\\
(IdShift) & id\ci_0 \pi_{\x}\rightarrow\pi_{\x} &\\
(ConsShift) & \langle s\mi N\is \x\rangle\ci_i \pi_{\x}\rightarrow s &\\
(Map) & s\ci_i \langle q\mi N\is \x\rangle\rightarrow\langle
s\ci_j q \mi s\ci_k N\is \x\rangle & (i
\geqslant j, i \geqslant k)\\
(\pi_1) & \pi_{\x}\ci_0 \y\rightarrow \y & (\x\neq \y)\\
(\pi_2) & (s\ci_i\pi_{\x})\ci_i \y\rightarrow s\ci_i \y & (\x \neq
\y)\\
(\alpha) & \boldsymbol{\lambda}_{i+1} \x.M\rightarrow\lambda
\y.\langle\pi_{\y}\mi \y\is \x\rangle\ci_i M &\\
(\xi) & \boldsymbol{\lambda}_{i+1} \x.M\arrow\lambda \x.M &\\
 (Decr_1) &
\boldsymbol{\lambda}_i \x.M\arrow\boldsymbol{\lambda}_j \x.M & (i
> j)\\
(Decr_2) & s\ci_i M\arrow s\ci_j M & (i>j)\\
(Decr_3) & s\ci_i q\arrow s\ci_j q & (i>j)
\end{array}
$ \\[5pt]
where $i,j,k\in N$. (Roughly, these are the rewrite rules of $R$,
where $\ci$ and $\boldsymbol{\lambda} \x$ are labelled by theirs
own values).
\end{definition}
\newpage
\begin{theorem}\label{normalizing}$Q$ is strongly normalizing on the sets of ground
terms and ground substitutions.
\end{theorem}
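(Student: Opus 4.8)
The plan is to produce a single well-founded reduction order that contains every rule of $Q$, namely a recursive path order with status built from a suitable precedence on the labelled signature. Because $Q$ is a first-order (two-sorted) rewriting system, once such an order is exhibited, strong normalization is immediate: the order is monotone, stable, and well-founded, and each rule strictly decreases in it.

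First I would fix the precedence $\succ$. To each labelled symbol I assign a rank in $\mathbb{N}\times\{0,1\}$: the composition symbol $\ci_n$ gets rank $(n,1)$ and the black lambda $\boldsymbol{\lambda}_n\x$ gets rank $(n,0)$, compared lexicographically, larger rank meaning higher precedence. Thus the compositions and black lambdas interleave by their labels, $\ci_{n+1}\succ\boldsymbol{\lambda}_{n+1}\x\succ\ci_n$, and every labelled symbol sits above all the fixed symbols (the application symbol, each $\lambda\x$, each pairing $\langle-\mi-\is\x\rangle$, and the constants $\x,id,\pi_{\x}$), whose mutual order is immaterial since no rule rewrites at such a root. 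This precedence is well-founded because $\mathbb{N}\times\{0,1\}$ is well-ordered and the base class carries no descending requirement. The one essential choice of \emph{status} is to compare the two arguments of every $\ci_n$ \emph{from the right}; all other symbols may be given ordinary left-to-right lexicographic status.

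Then I would verify $l\succ r$ rule by rule. The projecting and erasing rules $ConsVar, IdVar, IdShift, IdR, ConsShift, \pi_1$ follow from the subterm case, and the label-lowering rules $Decr_1,Decr_2,Decr_3$ follow from the strict drop of the root rank. For $App$ and $Map$ the root falls from $\ci_i$ to the application or pairing symbol while the residual compositions carry labels $j,k\le i$, so the recursive comparisons go through; the equal-label subcases are secured by the right-first status, since the right argument ($MN$, resp.\ $\langle q\mi N\is\x\rangle$) strictly dominates its image ($M$, resp.\ $q$). The four $Abs$ rules together with $\alpha$ and $\xi$ are handled by the interleaving: at an $Abs$ redex the root drops from $\ci_{i+1}$ to $\lambda\x$ or to $\boldsymbol{\lambda}_{i+1}\x$, both strictly below $\ci_{i+1}$, while every composition created on the right has label $\le i<i+1$; at an $\alpha$ or $\xi$ redex the root drops from $\boldsymbol{\lambda}_{i+1}\x$ to $\lambda\y$ or to $\ci_i$, again a strict rank drop, after which all created subterms compare below.

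The main obstacle is the pair of reassociation rules $Clos$ and $Ass$ (and similarly $\pi_2$), where the root $\ci_{i+j+k}$ is \emph{unchanged}. A left-to-right comparison fails here, because the left argument grows from $s$ to $s\ci_{i+j}q$; this is precisely the classical reason plain lexicographic path order cannot orient $a\ast(b\ast c)\to(a\ast b)\ast c$. Comparing the arguments of $\ci_n$ from the right is exactly what saves the day: the right argument strictly shrinks ($q\ci_{j+k}M\succ M$, resp.\ $q\ci_{j+k}r\succ r$), giving the lexicographic decrease, and the remaining obligation $l\succ s\ci_{i+j}q$ follows either from the strict label drop when $k\ge 1$ or, when $k=0$, from one further right-first comparison. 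Checking that this right-first status orients $Clos$, $Ass$, and $\pi_2$ uniformly is the crux; the rest is routine. Since $Q$ is contained in this well-founded recursive path order, $Q$ is strongly normalizing on ground terms and ground substitutions.
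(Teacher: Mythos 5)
Your proposal is correct, and at bottom it is the paper's own method: a path order over the labelled signature with the interleaved well-founded precedence $\ci_{n+1}\succ\boldsymbol{\lambda}_{n+1}\x\succ\ci_n\succ\boldsymbol{\lambda}_n\x$ and all labelled symbols above the unlabelled ones, which coincides with the precedence the paper writes down. The genuine difference is in what you make explicit: the paper disposes of the verification with the single sentence that ``termination is easily proved by the lexicographic path order,'' whereas you observe that with the default left-to-right status this claim actually fails, precisely on $Clos$ and $Ass$, where the root $\ci_{i+j+k}$ is unchanged and the leftmost argument grows from the variable $s$ to $s\ci_{i+j}q$ (a variable can never dominate a term properly containing it). Your right-first status on each $\ci_n$ is exactly the repair, and your case split for the residual obligation $l\succ s\ci_{i+j}q$ (label drop when $k\geqslant 1$, a second right-first comparison when $k=0$) is the verification the paper omits; so your write-up is, if anything, more complete than the published proof. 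One small inaccuracy: $\pi_2$ does not share the defect of $Clos$ and $Ass$, since there the \emph{left} argument falls from $s\ci_i\pi_{\x}$ to its proper subterm $s$, so plain left-to-right comparison already orients it; this is harmless, because under your right-first status the right arguments of $\pi_2$ are both $\y$ and the comparison passes to the left ones, which decrease, so the rule is oriented anyway.
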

\begin{proof}
By choosing the  well-founded precedence\\
$
\begin{array}{ll}
\boldsymbol{\lambda}_{i+1} \x>\ci_i>\boldsymbol{\lambda}_i
\x & \text{for all } i,\x;\\
\ci_i>\lambda \x & \text{for all } i,\x;\\
\ci_i>\cdot &\text{for all } i;\\
\ci_i>\la -\mi -\is \x\ra & \text{for all } i,\x;\\
\ci_i>\pi_{\x} & \text{for all } i,\x;\\
\ci_i> \x & \text{for all } i,\x;\\
\boldsymbol{\lambda}_i \x>\lambda \y & \text{for all } i,\x,\y;\\
\boldsymbol{\lambda}_{i} \x>\la-\mi -\is \x\ra & \text{for all } i,\x;\\
\boldsymbol{\lambda}_{i} \x>\pi_{\y} & \text{for all } i,\x,\y;\\
\boldsymbol{\lambda}_{i} \x> \y & \text{for all } i,\x,\y;\\
 \boldsymbol{\lambda}_i
\x>\boldsymbol{\lambda}_j \x & \text{for }
 i>j; \\
\ci_i>\ci_j & \text{for } i>j;
\end{array}
$\\[5pt]
termination is easily proved by the lexicographic path order.
\end{proof}
\begin{theorem}$R$ is strongly normalizing on the sets of ground
terms and ground substitutions.
\end{theorem}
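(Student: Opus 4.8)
The plan is to obtain strong normalization of $R$ from that of $Q$ (Theorem~\ref{normalizing}) by Zantema's method of \emph{semantic labelling}~\cite{Zantema}, using the size assignment $M\mapsto|M|$, $s\mapsto|s|$ as the underlying algebra. I would interpret every ground term and substitution in $\mathbb{N}$ with its usual well-founded order; the remark following the definition of $|\cdot|$ records that each function symbol of $R$ then denotes a weakly monotone function of $\mathbb{N}$ or of $\mathbb{N}\times\mathbb{N}$ into $\mathbb{N}$. The labelling attaches to each occurrence of $\ci$ in $s\ci M$ (resp. in $s\ci q$) the value $|s\ci M|=|s|+|M|$ (resp. $|s|+|q|$), and to each occurrence of $\boldsymbol{\lambda}\x$ in $\boldsymbol{\lambda}\x.M$ the value $|\boldsymbol{\lambda}\x.M|=|M|+1$; all other symbols are left unlabelled. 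This is precisely the labelling built into $Q$, where the arithmetic decompositions $i+1$, $j+k$, $i+j+k$ that appear in the rules merely record the values of the immediate subterms (for instance in $Clos$ one has $i=|s|$, $j=|q|$, $k=|M|$).

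First I would verify that $|\cdot|$ is a \emph{quasi-model} of $R$, i.e. that $|l|\geqslant|r|$ holds for every instance of every rule $l\arrow r$ of $R$. This is a routine rule-by-rule check. The rules $App$, $Clos$, $Ass$, $IdR$, $IdVar$, $IdShift$, $Map$, $\pi_1$, $\pi_2$, $\alpha$, $\xi$ and the four $Abs$ rules leave the value unchanged; for example $|s\ci MN|=|s|+\max(|M|,|N|)=\max(|s|+|M|,|s|+|N|)=|(s\ci M)(s\ci N)|$, and the $Abs$ rules are constant because $|\la\pi_\x\ci s\mi\x\is\x\ra|=\max(|s|,0)=|s|$. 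The remaining rules $ConsVar$, $New$ and $ConsShift$ may strictly decrease the value, since $|\la s\mi N\is\x\ra|=\max(|s|,|N|)\geqslant|N|$ and $\geqslant|s|$. Together with weak monotonicity of the interpretations, this is exactly the quasi-model condition.

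Next I would identify $Q$ with a system containing the labelled rewrite system $\overline{R}$ that Zantema's construction attaches to the quasi-model $|\cdot|$. The labelled image of each original rule is obtained by relabelling both sides according to the forced subterm values (the labelling functions $a,b\mapsto a+b$ and $a\mapsto a+1$ being monotone, the labels carry a well-order), and a comparison shows that every such labelled rule is an instance of the correspondingly named rule of $Q$. Where a value is computed by an addition (as in $Clos$, $Ass$, $Abs2$, $Abs4$, $\pi_2$) the $Q$-side condition is the exact equation; where it is computed by a $\max$ (as in $App$ and $Map$) $Q$ replaces the forced identity $i=\max(j,k)$ by the weaker $i\geqslant j$, $i\geqslant k$, so that $Q$ is here a mild over-approximation of $\overline{R}$. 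Because $|\cdot|$ is only a quasi-model, a value lowered by $ConsVar$, $New$ or $ConsShift$ forces the labels on enclosing occurrences of $\ci$ and $\boldsymbol{\lambda}\x$ to drop; Zantema's construction accounts for this by adjoining the \emph{decreasing rules} $f_i(\ldots)\arrow f_j(\ldots)$ for $i>j$, which are exactly $Decr_1$, $Decr_2$, $Decr_3$. Hence every $\overline{R}$-step is a $Q$-step, i.e. $\overline{R}\subseteq Q$.

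The conclusion is then immediate. By Theorem~\ref{normalizing} the relation $\overset{Q}{\arrow}$ is well-founded on ground terms and substitutions, and since $\overline{R}\subseteq Q$ the relation $\overset{\overline{R}}{\arrow}$ is well-founded as well; thus $\overline{R}$ is strongly normalizing. Zantema's theorem now yields that $R$ is strongly normalizing on ground terms and ground substitutions, as claimed. I expect the genuine work to lie in the bookkeeping of the third paragraph: checking that the elaborate side conditions of $Q$ and its three $Decr$ rules really do contain the labelled-plus-decreasing system dictated by $|\cdot|$, so that Theorem~\ref{normalizing} may legitimately be read as the termination of $\overline{R}$. The verification of the quasi-model inequalities is entirely mechanical by comparison.
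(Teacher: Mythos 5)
Your proposal is correct and follows essentially the same route as the paper: the paper also derives strong normalization of $R$ from Theorem~\ref{normalizing} by Zantema's semantic labelling, labelling every $\ci$ and $\boldsymbol{\lambda}\x$ with its own value $|\cdot|$ so that any infinite $R$-sequence would yield an infinite $Q$-sequence. Your write-up merely makes explicit the bookkeeping (the quasi-model inequalities, the role of the $Decr$ rules, and the containment of the labelled system in $Q$) that the paper delegates to the citation of Zantema's Theorem~81.
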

\begin{proof}
For any infinite $R$-sequence
$$M_1\arrow M_2\arrow M_3\arrow\ldots$$
we can get some infinite $Q$-sequence simply by labelling all
symbols $\circ$ and $\boldsymbol{\lambda} \x$ by theirs own
values. See~\cite{Zantema}, Theorem~81 for details (see
also~\cite{Zantema}, Example~33). The proof  is similar for
substitutions.
\end{proof}
\newpage

\section{Post canonical system for terms and substitutions}
In this section we consider some Post canonical system building
the sets of terms and substitutions. The alphabet of the system
contains:\\[3pt]
$ \begin{array}{ll}
 x\mid y\mid z\mid \lambda\mid .\mid \ci\mid
id\mid \pi\mid \la\mid \mi\mid\ra\mid (\mid ) & symbols\\
\term\mid \sub\mid \var\mid \appterm\mid \absterm\mid
\closterm\mid \conssub\mid \compsub \mid
 \varlist\mid \slist
&
symbols\\
 a\mid b\mid M\mid N\mid s\mid q\mid G & variables
\end{array}$\\[5pt]

$\term M$ means that $M$ is a term. $\sub s$ means that $s$ is a
substitution. $\var a$ means that $a$ is a variable. $\appterm M$
means that $M$ is a term of the form $M_1M_2\ldots M_n$, where
$n\geqslant 2$. $\absterm M$ means that $M$ is a term of the form
$\lambda a_1\ldots a_n.N$, where $n\geqslant 1$. $\closterm M$
means that $M$ is a term of the form $s\ci N$. $\conssub s$ means
that $s$ is a substitution of the form $\la q\mi N_1\is
a_1,\ldots,N_n\is a_n\ra$, where $n\geqslant 1$. $\compsub s$
means that $s$ is a substitution of the form $s_1\ci
s_2\ci\ldots\ci
s_n$, where $n\geqslant 2$. We will write $\pi_a$ instead of $\pi a$.\\[5pt]

The general rules:\\[5pt]

\ruleone{\var a}{\term a}\qquad \ruleone{\appterm  M}{\term
M}\qquad \ruleone{\absterm  M}{\term M}\qquad\ruleone{\closterm
M}{\term
M}\\[10pt]

$\sub  id$\qquad\ruleone{\var  a}{\sub
\pi_a}\qquad\ruleone{\conssub s}{\sub  s}\qquad\ruleone{\compsub
s}{\sub  s}\\[10pt]

For simplicity, we will use only $x,y,z$.\\[3pt]

$\var x$\qquad$\var y$\qquad$\var z$\\[5pt]

The following rules build terms of the form\\[1pt]
$ab$,\quad $a(N_1N_2\ldots N_k)$,\quad $a(\lambda a_1\ldots a_k.N)$,\quad $a(s\ci N)$\\[5pt]

\ruletwo{\var a}{\var b}{\appterm  ab}\\[5pt]

\ruletwo{\var a}{\appterm  N}{\appterm  a(N)}\\[5pt]

\ruletwo{\var a}{\absterm  N}{\appterm  a(N)}\\[5pt]

\ruletwo{\var a}{\closterm  N}{\appterm  a(N)}\\[5pt]

The following rules build terms of the forms\\[1pt]
$M_1M_2\ldots M_n \,a$,\quad $M_1M_2\ldots M_n(N_1N_2\ldots
N_k)$,\quad $M_1M_2\ldots M_n(\lambda a_1\ldots a_k.N)$,\\
and  $M_1M_2\ldots M_n(s\ci N)$\\[5pt]

\ruletwo{\appterm  M}{\var a}{\appterm  Ma}\\[5pt]

\ruletwo{\appterm  M}{\appterm  N}{\appterm  M(N)}\\[5pt]

\ruletwo{\appterm  M}{\absterm  N}{\appterm  M(N)}\\[5pt]

\ruletwo{\appterm  M}{\closterm  N}{\appterm  M(N)}\\[5pt]

The following rules build terms of the forms\\[1pt]
$(\lambda a_1\ldots a_n.M)\,a$,\quad $(\lambda a_1\ldots
a_n.M)(N_1N_2\ldots N_k)$,\quad $(\lambda a_1\ldots a_n.M)(\lambda
b_1\ldots b_k.N)$,\\
and $(\lambda a_1 \ldots
a_n.M)(s\ci N)$\\[5pt]

\ruletwo{\absterm  M}{\var a}{\appterm  (M)a}\\[5pt]

\ruletwo{\absterm  M}{\appterm  N}{\appterm  (M)(N)}\\[5pt]

\ruletwo{\absterm  M}{\absterm  N}{\appterm (M)(N)}\\[5pt]

\ruletwo{\absterm  M}{\closterm  N}{\appterm  (M)(N)}\\[5pt]

The following rules build terms of the forms\\[1pt]
$(s\ci M)\,a$,\quad $(s\ci M)(N_1N_2\ldots N_k)$,\quad $(s\ci
M)(\lambda a_1\ldots a_k.N)$,\quad
$(s\ci M)(q\ci N)$\\[5pt]

\ruletwo{\closterm  M}{\var a}{\appterm  (M)a}\\[5pt]

\ruletwo{\closterm  M}{\appterm  N}{\appterm  (M)(N)}\\[5pt]

\ruletwo{\closterm  M}{\absterm  N}{\appterm  (M)(N)}\\[5pt]

\ruletwo{\closterm  M}{\closterm  N}{\appterm  (M)(N)}\\[5pt]

The following rules build lists of variables:\\[5pt]

\ruleone{\var a}{\varlist a}\qquad\ruletwo{\varlist G}{\var
a}{\varlist Ga}\\[5pt]

The following rules build terms of the forms\\[1pt]
$\lambda a_1\ldots a_n.a$,\quad $\lambda a_1\ldots
a_n.M_1M_2\ldots
M_k$,\quad  $\lambda a_1\ldots a_n.s\ci M$\\[5pt]

\ruletwo{\varlist G}{\var a}{\absterm  \lambda
G.a}\\[5pt]

\ruletwo{\varlist  G}{\appterm  M}{\absterm \lambda
G.M}\\[5pt]

\ruletwo{\varlist  G}{\closterm  M}{\absterm  \lambda  G.M}\\[5pt]

The following rules build terms of the forms\\[1pt]
 $id\ci M$,\quad
$\pi_a\ci M$,\quad $\la s\mi N_1\is a_1,\ldots,N_n\is a_n\ra\ci
M$,\quad
 $(s_1\ci s_2\ci\ldots\ci s_n)\ci M$\\[5pt]

\ruleone{\term  M}{\closterm  id\ci M}\\[5pt]

\ruletwo{\var a}{\term  M}{\closterm  \pi_a\ci  M}\\[5pt]

\ruletwo{\conssub  s}{\term  M}{\closterm  s\ci  M}\\[5pt]

\ruletwo{\compsub  s}{\term  M}{\closterm  (s)\ci  M}\\[5pt]

The following rules build lists of the form\\[1pt]
 $N_1\is
a_1\mi\ldots\mi N_n\is a_n$\\[5pt]

\ruletwo{\term  N}{\var a}{\slist  N\is a}\qquad \prooftree
             \slist G\quad
             \term N\quad
             \var a
      \justifies
          \slist G\mi N\is a\qquad\quad
      \thickness=0.08em
      \shiftright 2em
     \endprooftree\\[5pt]

The following rules build substitutions of the forms\\[1pt]
$\la id\mi N_1\is a_1,\ldots,N_n\is a_n\ra$,\quad $\la\pi_a\mi
N_1\is a_1,\ldots,N_n\is a_n\ra$,\\
and  $\la s_1\ci s_2\ci\ldots\ci s_k\mi N_1\is a_1,\ldots,N_n\is a_n\ra$\\[5pt]

\ruleone{\slist G}{\conssub  \la id\mi
G\ra}\\[5pt]

\ruletwo{\var a}{\slist  G}{\conssub \la\pi_a\mi G\ra}\\[5pt]

\ruletwo{\compsub  s}{\slist  G}{\conssub \la s\mi G\ra}\\[5pt]

The following rules build substitutions of the forms\\[1pt]
$id\ci id$,\quad $\pi_a\ci id$,\quad $\la s\mi N_1\is
a_1,\ldots,N_n\is a_n\ra\ci id$,\quad $(s_1\ci s_2\ci\ldots\ci
s_n)\ci id$\\[5pt]

$\compsub  id\ci id$\\[5pt]

\ruleone{\var a}{\compsub  \pi_a\ci id}\\[5pt]

\ruleone{\conssub  s}{\compsub s\ci id}\\[5pt]

\ruleone{\compsub  s}{\compsub  (s)\ci id}\\[5pt]

The following rules build substitutions of the forms\\[1pt]
$id\ci\pi_a$,\quad $\pi_b\ci\pi_a$,\quad $\la s\mi N_1\is
a_1,\ldots,N_n\is a_n\ra\ci\pi_a$,\quad $(s_1\ci
s_2\ci\ldots\ci s_n)\ci\pi_a$\\[5pt]

\ruleone{\var a}{\compsub id\ci\pi_a}\\[5pt]

\ruletwo{\var b}{\var a}{\compsub\pi_b\ci\pi_a}\\[5pt]

\ruletwo{\conssub s}{\var a}{\compsub s\ci\pi_a}\\[5pt]

\ruletwo{\compsub  s}{\var a}{\compsub
(s)\ci\pi_a}\\[5pt]

The following rules build substitutions of the forms\\[1pt]
$id\ci\la q\mi M_1\is b_1,\ldots,M_k\is b_k\ra$,\quad $\pi_a\ci\la
q\mi M_1\is
b_1,\ldots,M_k\is b_k\ra$,\\
  $\la s\mi N_1\is a_1,\ldots,N_n\is
a_n\ra\ci\la q\mi M_1\is
b_1,\ldots,M_k\is b_k\ra$,\\
and $(s_1\ci s_2\ci\ldots\ci s_n)\ci\la q\mi M_1\is
b_1,\ldots,M_k\is b_k\ra$\\[5pt]

\ruleone{\conssub q}{\compsub id\ci q}\\[5pt]

\ruletwo{\var a}{\conssub q}{\compsub \pi_a\ci q}\\[5pt]

\ruletwo{\conssub s}{\conssub q}{\compsub s\ci q}\\[5pt]

\ruletwo{\compsub  s}{\conssub  q}{\compsub (s)\ci q}\\[5pt]

The following rules build substitutions of the forms\\[1pt]
$id\ci q_1\ci q_2\ci\ldots\ci q_k$,\quad $\pi_a\ci q_1\ci
q_2\ci\ldots\ci q_k$,\\ $\la s\mi N_1\is a_1,\ldots, N_n\is
a_n\ra\ci q_1\ci q_2\ci\ldots\ci q_k$,\\
 and $(s_1\ci s_2\ci\ldots\ci
s_n)\ci q_1\ci q_2\ci\ldots\ci q_k$\\[5pt]

\ruleone{\compsub  q}{\compsub  id\ci q}\\[5pt]

\ruletwo{\var a}{\compsub  q}{\compsub  \pi_a\ci q}\\[5pt]

\ruletwo{\conssub  s}{\compsub  q}{\compsub  s\ci q}\\[5pt]

\ruletwo{\compsub  s}{\compsub  q}{\compsub (s)\ci q}\\[5pt]

\newpage

\section{Notes}
(1) We can accept $Abs$ in the stronger form\\
$\begin{array}{lll}  & s\ci\lambda \x.M\arrow\lambda
\y.\la\pi_{\y}\ci s\mi \y\is \x\ra\ci M & (\x,\y \text{ are
arbitrary})
\end{array}$\\[5pt]
All results of this article remain true.
We can also add the following rewrite rules\\
$\begin{array}{ll} & id\ci M\arrow M\\
& id\ci s\arrow s
\end{array}$\\[5pt]
All results of this article remain true.\\

(2) It is easy to add some $\alpha_2$-like reduction for
substitutions, but that little benefit, because the analogue of
Lemma~\ref{lemmauparrow} is false for substitutions.\\

(3) It is easy to give the following definitions:
\begin{definition}(Free variables of substitutions). By definition, put\label{FV}\\
$
\begin{array}{ll}
(i) & FV(id)=\langle \emptyset,\emptyset,\emptyset,\ldots\rangle\\
(ii) & FV(\pi_{\x})=\langle \emptyset,\emptyset,\emptyset,\ldots\rangle\\
(iii) & FV(\la s\mi N\is \x\ra)=FV(s)\cup FV(N)\\
 (iv) & FV(s\ci q)=O_s(FV(q))
\end{array}
$
\end{definition}
We see that $FV(s)=O_s(\langle
\emptyset,\emptyset,\emptyset,\ldots\rangle)$

\begin{definition}($\alpha$-equivalence for substitutions).\\
 We say that $\Gamma\vdash s\tri\Delta$ is $\alpha$-\emph{equal} to $\Sigma\vdash
q\tri\Psi$ and write\\ $(\Gamma\vdash
s\tri\Delta)\alp(\Sigma\vdash
q\tri\Psi)$ iff\\
$(\Gamma\vdash s\tri\Delta)\Rightarrow (m\vdash u)$ and\\
$(\Sigma\vdash q\tri\Psi)\Rightarrow (m\vdash u)$, for some $m,u$.
\end{definition}
\begin{example}$ $\\
$(x,y\vdash\pi_y\tri x)\Rightarrow(2\vdash \pi)$\\
$(x,x\vdash\pi_x\tri x)\Rightarrow(2\vdash\pi)$\\
$(x,y\vdash\pi_y\tri x)\alp (x,x\vdash \pi_x\tri x)$
\end{example}

\end{document}